\definecolor{LightRed}{rgb}{0.9,0.55,0.5}
\def\ps@pprintTitle{%
 \let\@oddhead\@empty
 \let\@evenhead\@empty
 \def\@oddfoot{}%
 \let\@evenfoot\@oddfoot}
\newcommand{\tnorm}{\@ifstar\@tnorms\@tnorm}
\newcommand{\@tnorm}[2][]{%
	\mathopen{#1|\mkern-1.5mu#1|\mkern-1.5mu#1|}
	#2
	\mathclose{#1|\mkern-1.5mu#1|\mkern-1.5mu#1|}
}
\newcommand{\jump}[1]{\llbracket #1 \rrbracket}
\newcommand{\Bk}{\color{black}}
\newtheorem{assumption}{Assumption}
\newtheorem{remark}{Remark}
	\title{A Hybridizable Discontinuous Galerkin Method for the Miscible Displacement Problem Under Minimal Regularity}
\author{Keegan L.A. Kirk\thanks{Department of Mathematical Sciences and Center for Mathematics and Artificial Intelligence, George Mason University, Fairfax, VA 22030 (\email{kkirk6@gmu.edu})} \and Beatrice Riviere \thanks{The Ken Kennedy Institute, Rice University, Houston, TX 77005. (\email{riviere@rice.edu}).}\funding{This work was partially supported by the Natural Sciences and Engineering Research Council of Canada under the Postdoctoral Fellowship Program (PDF-568008) (KK), the Office of Naval Research (ONR) under Award NO: N00014-24-1-2147 (KK), the Air Force Office of Scientific Research (AFOSR) under Award NO: FA9550-22-1-0248 (KK) and the National Sciences Foundation Division of Mathematical Sciences under Award NO: NSF-DMS 2111459 (BR).}}
\begin{document}

\maketitle
\begin{abstract}
A numerical method based on the hybridizable discontinuous Galerkin method in space and backward Euler in time  is formulated and analyzed for solving the miscible displacement problem. Under low regularity assumptions, convergence is established by proving that, up to a subsequence, the discrete pressure, velocity and concentration converge to a weak solution as the mesh size and time step tend to zero. The analysis is based on several key features: an H(div) reconstruction of the velocity, the skew-symmetrization of the concentration equation, the introduction of an auxiliary variable and the definition of a new numerical flux. Numerical examples demonstrate optimal rates of convergence for smooth solutions, and convergence for problems of low regularity.
\end{abstract}

\begin{keywords} Hybridizable discontinuous Galerkin, convergence,
compactness, H(div) projection, convergence, low regularity.
\end{keywords}

\begin{AMS}
65M12, 65M60, 76S05
\end{AMS}

\maketitle



	\section{Introduction}

This work formulates and analyzes a hybridizable discontinuous Galerkin (HDG) method for solving the miscible displacement problem.  This type of coupled flow and transport problem occurs when two miscible fluids move through a porous medium, for instance the flow of solvent in a subsurface saturated with a contaminant. To our knowledge, our work is the first one that proves theoretically the convergence of  an HDG-based method for the miscible displacement problem under low regularity.   The paper \cite{Fabien:2020a} demonstrated computationally that an HDG method (different from our proposed method) is a suitable and an effective method for problems of sufficient regularity, but it did not contain any theoretical analysis.

The numerical analysis of the miscible displacement problem is challenging for several reasons. First, the flow and transport equations are fully coupled as opposed to the one-way coupling of a tracer flow. Second, the dispersion-diffusion matrix in the transport equation depends on the velocity  and therefore its uniform boundedness cannot be assumed. Indeed, there is no guarantee that the weak solution for the velocity field be bounded in the sup-norm. 

Our main contribution is to propose and analyze a novel unconditionally stable HDG method for the miscible displacement in the general case where weak solutions are of low regularity and where the dispersion-diffusion matrix is not assumed to be bounded. We utilize a skew-symmetrization formulation to express the convection in half primal and half-dual form as in
 \cite{Bartels:2009,Li:2015}. We introduce a second auxiliary variable for the mixed formulation to handle the nonlinearity. 
 We introduce carefully designed numerical fluxes (inspired by the Lax--Friedrichs fluxes) to ensure positive-definiteness of the discrete convection term. To obtain a compatible flow discretization in the sense of \cite{DawsonSunWheeler2004}, we project the velocity into an H(div) conforming space. Our numerical examples demonstrate the importance of H(div) conforming velocity for an accurate numerical concentration.

Another contribution of our work is a compactness result for time-dependent HDG approximations, which is a by-product of our analysis and which can be applied to other time-dependent nonlinear PDEs.
As in previous works \cite{Bartels:2009,Li:2015,riviere_walkington_2011}, our convergence proof relies on a strong compactness result for the sequence of approximate concentrations. However, in those works, it is implicitly used that this sequence of concentrations is bounded uniformly in the space $\mathrm{BV}(\Omega)$. This fact is well established for interior penalty type stabilizations \cite{lew2004optimal,Buffa:2009,DiPietro:2010}, and extensions to Lehrenfeld--Sch\"oberl type stabilizations \cite{lehrenfeld2010hybrid} are possible using, e.g., the techniques of \cite{cockburnH1}. This permits a direct use of the Aubin--Lions lemma in the case of low order time discretizations \cite{Bartels:2009} or a slight generalization to the non-conforming setting in the case of higher order time discretizations \cite{Li:2015,riviere_walkington_2011}. Because the classic HDG stabilization term is independent of the mesh size, the BV bound does not hold. Instead, following the classic work of Simon \cite{Simon:1986}, we prove a uniform bound on time translations and deduce compactness \`{a} la Kolmogorov (see e.g. \cite{Brezis:book,Folland:book}). This requires a discrete Ehrling lemma as in \cite{droniou2018gradient}, which in turn relies on a recently established Rellich--Kondrachov type result for the HDG method \cite{Jiang:2023}.

The miscible displacement problem has been extensively studied. While the literature is vast on the convergence analysis of various numerical methods (finite element methods, mixed finite element methods, discontinuous Galerkin methods, virtual elements, etc.) in the case of sufficiently smooth weak solutions or in the case of the constant or uniformly bounded dispersion-diffusion matrix \cite{EwingMfw1980,douglasEwingWheeler1983,Russell85,EpshteynRiviere2008,Beirao2021,droniou2019unified}, the general case is however treated in a few papers only. 
Finite element with mixed finite elements are analyzed in \cite{riviere_walkington_2011}; mixed finite elements with discontinuous Galerkin in \cite{Bartels:2009,Jensen2010,Li:2015,Girault:2016}.

Hybridization is a popular technique because of its computational efficiency \cite{cockburn2009hybridizable,cockburn2023hybridizable}.  With static condensation, the resulting linear system that involves the facet degrees of freedom only, is a much smaller system than the one obtained with non-hybridized methods. Once the facet unknowns are solved for, an embarrassingly parallel algorithm can be used to recover the interior degrees of freedom. HDG methods have been applied to a variety of problems. For coupled flow and transport problems, we refer the reader to \cite{cesmelioglu2023hybridizable, kirk2024combinedflow}.  The one-way coupled flow and transport problem is discretized by a combination of hybrid mixed and HDG methods in
\cite{kirk2024combinedflow} and error bounds are derived for solutions with enough regularity.  The paper \cite{cesmelioglu2023hybridizable} contains the analysis of an (interior penalty) HDG method for a multiphysics problem fully coupling Stokes, Darcy and transport. Convergence of the method is shown by deriving a priori error estimates for solutions with enough regularity.

The outline of the paper is as follows: in the next section, the model problem and its weak solution are presented. Section~\ref{sec:scheme} introduces the numerical scheme for the flow and transport problems. Well-posedness is proved in Section~\ref{sec:wellpos}. To prove convergence, compactness is established in Section~\ref{sec:compact}, and passing to the limit in the discrete formulation is done in Section~\ref{sec:limit}. Finally, numerical examples show the accuracy and robustness of the proposed scheme. Conclusions follow. 

\section{The miscible displacement problem}
Let $\Omega \subset \mathbb{R}^d, d=2,3$ be a bounded polygonal or polyhedral Lipschitz domain.  
The miscible displacement problem in $\Omega\times (0,T)$ is modeled by the following coupled equations
\begin{align} \label{eq:Darcy_prob_a}
		\bm{u} &= - \bm{K}(\cdot,c) \nabla p,& \\
		\nabla \cdot \bm{u} &= f_I - f_P,&  \label{eq:Darcy_prob_b}\\
  \phi \partial_t c - \nabla \cdot (\bm{D}(\bm{u})\nabla c - \bm{u}c)  &= f_I \, \overline{c} - f_P \,c. \label{eq:transport_prob} 
\end{align}

\begin{assumption}
Throughout, we make the following assumptions on the data: \vspace{2.5mm}
	\begin{enumerate}[noitemsep,topsep=2pt,leftmargin=!,labelwidth=\widthof{(ii)}]
		\item[(i)] The injection and production functions satisfy $f_I, f_P \in L^\infty(0,T;L^2(\Omega))$, $f^I, f^P \ge 0$, and for a.e. $t \in [0,T]$,
		\begin{equation}\label{eq:injectioncond}
		\int_\Omega f_I (x,t) \dif x = \int_\Omega f_P (x,t) \dif x.
		\end{equation}
		\item[(ii)] There exist constants $0 < \phi_0 < \phi_1$ such that the porosity $\phi \in L^\infty(\Omega)$ satisfies
		\begin{equation}\label{eq:porosity}
		\phi_0 \le \phi(\bm{x}) \le \phi_1, \quad \text{for a.e. } \bm{x} \in \Omega.
		\end{equation}
		\item[(iii)] The concentration of the injected solvent $\overline{c}$ belongs to $ L^2(0,T;L^{2d/(d-1)}(\Omega))$. 
		\item[(iv)]  The matrix $\bm{K}: \Omega \times \mathbb{R} \to \mathbb{R}^{d\times d}$ is symmetric, Carath\'{e}odory, uniformly bounded, and elliptic. Thus, there exist constants $0 < k_0 < k_1$ such that
		\[
		k_0 |\bm \xi|^2 \le \bm \xi^T \bm K(x, c) \bm \xi \le k_1 |\bm \xi|^2, \quad (x,c) \in \Omega \times \mathbb{R}, \, \bm \xi \in \mathbb{R}^d.
		\]
		 We assume that $\bm{K}(x,c) = \frac{1}{\mu(c)} \bm{\kappa}(x)$, where $\bm \kappa\in L^\infty(\Omega)^{d\times d}$ is the permeability field and $\mu(c)$ is the viscosity of the fluid mixture. For ease of notation, we suppress the spatial dependence of $\bm K$ below.
		\item[(v)] $\bm{D}: \mathbb{R}^d \to \mathbb{R}^{d\times d}$ is symmetric, Lipschitz continuous, and there exist constants $0 < d_0 < d_1$ such that
		\begin{equation}\label{eq:diffdisp}
		d_0 (1+|\bm u|) |\bm \xi|^2 \le \bm \xi^T \bm D(x,\bm u) \bm \xi \le d_1(1+|\bm u|) |\bm \xi|^2, \quad (x,\bm u) \in \Omega \times \mathbb{R}^d, \, \bm \xi \in \mathbb{R}^d.
		\end{equation}
		A typical example of the matrix $\bm D$ is:
  \begin{equation}
		\bm D(\bm{u}) = d_0 \bm{I} + |\bm{u}|\del{\alpha_l E(\bm{u}) + 
			\alpha_t(\bm{I} - E(\bm{u}))},
	\end{equation}
	where $E(\bm{u}) = \bm{u} \bm{u}^T/ |\bm{u}|^2$ and 
	$|\bm{u}|$ denotes the
	Euclidean norm of $\bm{u}$. 
	\end{enumerate}
\end{assumption}

Before introducing the HDG discretization of the transport problem \eqref{eq:transport_prob}, we follow \cite{Bartels:2009,Li:2015} by writing the first-order terms in half primal and half dual form. This is essential to ensure unconditionally stability of the numerical scheme. Formally, the following relation holds:
\[
2 \bm u \cdot \nabla c = \bm u \cdot \nabla c + \nabla \cdot(\bm u c) - (\nabla \cdot \bm u )c= \bm u \cdot \nabla c + \nabla \cdot (\bm u c) - (f_I - f_P) c,
\]
and consequently \eqref{eq:transport_prob} can be rewritten as:
\begin{equation} 
	\phi \partial_t c - \nabla \cdot (\bm{D}(\bm{u})\nabla c - \tfrac{1}{2}\bm{u}c) + \tfrac{1}{2} \bm u \cdot \nabla c + \tfrac{1}{2}(f_I + f_P) c= f_I \, \overline{c}.
	\quad 
	\text{ in } 
	\Omega \times (0,T).
\end{equation}

Typically, HDG methods for second-order elliptic problems are derived based on a mixed formulation, wherein a (formally) equivalent first order system is obtained by introduced an auxiliary variable $\bm q = \bm D(\bm u) \nabla c$ (see e.g. \cite{Fabien:2020a}). However, as the velocity $\bm u$ is not assumed bounded, $\bm |\bm D^{-1}(\bm u)|$ degenerates as $|\bm u| \to \infty$, leading to a loss of ellipticity. This makes the stability analysis particularly challenging. Instead, we propose an alternative three field formulation for the transport problem based on the introduction of \textit{two auxiliary variables} $\bm \theta$ and $\bm q$ satisfying the system
\begin{align*}
\bm{D}(\bm u) \bm \theta - \bm q &= 0, \\
\bm \theta &= - \nabla c.
\end{align*}
The transport equation can then be written as the following first order system:
\begin{subequations} \label{eq:transport_prob_rewritten} 
\begin{align}  \label{eq:transport_prob_rewritten_a} 
\bm D(\bm u) \bm \theta - \bm q = 0,& \quad \text{ in } \Omega \times (0,T), \\  \label{eq:transport_prob_rewritten_b} 
\bm \theta + \nabla c = 0,& \quad \text{ in } \Omega \times (0,T), \\  \label{eq:transport_prob_rewritten_c} 
		\phi \partial_t c + \nabla \cdot (\bm q+ \tfrac{1}{2}\bm{u}c) + \tfrac{1}{2} \bm u \cdot \nabla c + \tfrac{1}{2}(f_I + f_P) c= f_I \, \overline{c},& \quad \text{ in } \Omega 
		\times (0,T).
\end{align}
\end{subequations}
The boundary conditions and initial condition are given by
\begin{equation} \label{eq:transport_prob_rewritten_BC}
	\begin{split}
		\bm{u} \cdot \bm{n} &= 0, \quad \text{ on } \partial \Omega \times (0,T), \\
		\bm{\theta} \cdot \bm{n} &= 0, \quad \text{ on } \partial \Omega \times (0,T), \\
		c(\cdot,0) &= c_0, \quad \text{ in } \Omega \times \cbr{0}. \\
	\end{split}
\end{equation}
The pressure is unique up to a constant. To fix the constant, we assume $p \in L_0^2(\Omega)$, the subspace of $L^2(\Omega)$ functions with vanishing mean over $\Omega$. 
The weak formulation of \eqref{eq:Darcy_prob_a}-\eqref{eq:transport_prob} is now given: 
\begin{definition}[Weak formulation]
	A triplet $(\bm u, p, c)$ in $L^\infty(0,T;H_0({\rm div};\Omega)) \times L^\infty(0,T;L_0^2(\Omega)) \times (L^2(0,T;H^1(\Omega))\cap H^1(0,T;W^{1,2d}(\Omega)^\star)$ is said to be a weak solution of the system \eqref{eq:Darcy_prob_a}-\eqref{eq:transport_prob} if, for all $(\bm r, s) \in L^2(0,T;H_0({\rm div};\Omega)) \times L^2(0,T;L^2(\Omega))$ and for all $w \in H^1(0,T;H^2(\Omega)) \cap H^1(0,T;H^1(\Omega)^\star)$, it holds that
	\begin{align}
		\int_0^T \del{ (\bm{K}^{-1}(c) \bm u, \bm r)_\Omega - (p,\nabla \cdot \bm r)_\Omega }\dif t = 0, &\label{eq:weak1}\\
		\int_0^T (\nabla \cdot \bm u, s)_\Omega \dif t = \int_0^T (f_I - f_P, s)_\Omega &\dif t, \label{eq:weak2} \\
		\int_0^T \left(\langle \phi \partial_t c, w\rangle_{W^{1,2d}(\Omega)^*,W^{1,2d}(\Omega)} + (\bm{D}(\bm u) \nabla c, \nabla w)_\Omega - (\bm u  c, \nabla w)_\Omega +  (f_I c,w)_\Omega \right) &\dif t \nonumber\\ = 
  \int_0^T(f_I \overline{c},w)_\Omega &\dif t. \label{eq:weak3}
	\end{align}
\end{definition}
Various weak formulations of the miscible displacement problem have appeared in the literature with other spaces such as $H^2(\Omega)^*$ or  $W^{1,4}(\Omega)^*$, see e.g. \cite{Feng:1995,Bartels:2009,riviere_walkington_2011}. Our analysis below requires an HDG Poincar\'e inequality in $L^p(\Omega)$ with a tighter restriction on the range of $p$ than its continuous analogue, which
implies the use of the space $W^{1,2d}(\Omega)^*$.

\section{The numerical scheme}
\label{sec:scheme}

In this section, we introduce notation, the discrete spaces and the proposed numerical schemes.

\subsection{The finite element spaces}

The domain $\Omega$ is partitioned into a shape-regular simplicial mesh $\mathcal{E}_h$ with $h$
denoting the maximum of all element diameters $h_E$.  The boundary of an element $E$ is denoted by $\partial E$ and
its unit outward normal vector by $\bm n_E$.  The union of all the faces in the mesh is $\partial \mathcal{E}_h$
and the union of interior faces is $\partial\mathcal{E}_h^{\mathrm{int}}$. 

For any integer $k\geq 0$, the discrete spaces for the flow and transport equations are defined below. 

\begin{align*}
	\bm{V}_h &= \cbr{ \bm{v}_h \in (L^2(\Omega))^d \, : \, 
		\bm{v}_h|_{E} \in (\mathbb{P}_k(E))^d, \, \forall E \in \mathcal{E}_h},\\
    W_h &= \cbr{ w_h \in L^2(\Omega) \, : \, w_h|_E \in \mathbb{P}_{k}(E), \, \forall E \in \mathcal{E}_h},\\
    Q_h &= W_h \cap L_0^2(\Omega),\\
	M_h &= \cbr{ \widehat{w}_h \in L^2(\partial \mathcal{E}_h) \, : \, \widehat{w}_h|_e \in \mathbb{P}_k(e), \, \forall e 
		\in \partial \mathcal{E}_h}
\end{align*}

The $L^2$ inner-product on $\Omega$ is denoted by $(\cdot,\cdot)_\Omega$. We also denote by $(\cdot,\cdot)_E$ the $L^2$ inner-product on an element $E$ and by $\langle\cdot,\cdot\rangle_e$ the
$L^2$ inner-product on a face $e\subset\partial E$.  We will use the usual short-hand notation:
\[
(w,v)_{\mathcal{E}_h}  = \sum_{E\in\mathcal{E}_h} (w, v)_E, \quad
\langle w, v\rangle_{\partial\mathcal{E}_h} = \sum_{E\in\mathcal{E}_h} \langle w, v\rangle_{\partial E},
\]
with the usual modifications for vector-valued functions. The corresponding norms are 
$\Vert \cdot \Vert_{L^2(\Omega)}$ and $\Vert \cdot\Vert_{L^2(\partial\mathcal{E}_h)}$.
The jump of a scalar function $w$ across a face $e$ is denoted by $\jump{w}$; it is
uniquely defined by fixing a normal vector for each face $e$.

Let $\tau>0$ be the time step value and let $t^i = i \tau$
be the $i$-th discrete time such that $0<t^1<\dots<t^N = T$ forms a uniform partition of the time interval. The functions $f_P, f_I, \overline{c}$ evaluated at $t=t^i$ are denoted by
$f_P^i, f_I^i$ and $\overline{c}^i$ respectively.

We will also use the notation $A \lesssim B$ to denote that $A\leq C B$ with a constant $C$ independent of $h$ and $\tau$.

\subsection{The HDG method for the flow problem}
Our proposed discretization for the Darcy problem \crefrange{eq:Darcy_prob_a}{eq:Darcy_prob_b} can be cast as: given $c_h^{i-1} \in W_h$, find $(\bm u_h^i, p_h^i, \widehat p_h^{\,i}) \in \bm V_h \times Q_h \times M_h$ such that
\begin{subequations} \label{eq:HMHDG_darcy_step}
	\begin{align} 
		(\bm{K}^{-1}(c_h^{i-1}) \bm{u}_h^i, \bm{r}_h )_{\mathcal{E}_h} - (p_h^i, 
		\nabla
		\cdot \bm{r}_h)_{\mathcal{E}_h} + \langle \widehat{p}_h^{\,i}, \bm{r}_h \cdot 
		\bm{n} 
		\rangle_{\partial \mathcal{E}_h} &= 0, \label{eq:HMHDG_darcy_step_a}\\
		(\nabla \cdot\bm{u}_h^i, s_h)_{\mathcal{E}_h}   + \langle 
		\sigma_{u}(p_h^i - \widehat{p}_h^{\,i}), s_h \rangle_{\partial \mathcal{E}_h} &= (f_I^i - 
		f_P^i,s_h)_{\mathcal{E}_h},
		\label{eq:HMHDG_darcy_step_b} \\ 
		\langle \bm{u}_h^i \cdot 
		\bm{n} +
		\sigma_{u}(p_h^i - \widehat{p}_h^{\,i}),  \widehat{s}_h \rangle_{\partial \mathcal{E}_h} &= 0, \label{eq:HMHDG_darcy_step_c} 
	\end{align}
\end{subequations}
for all $(\bm r_h, s_h, \widehat s_h) \in \bm V_h \times Q_h \times M_h$. Here, $\sigma_u \in L^\infty(\partial \mathcal{E}_h)$ is a stabilization function. As $\bm{K}(\cdot)$ is uniformly elliptic, it is well known \cite{Cockburn:2008} that system \crefrange{eq:HMHDG_darcy_step_a}{eq:HMHDG_darcy_step_c} is well-posed provided $\sigma_u > 0 \text{ for a.e. } x \in \partial \mathcal{E}_h$. We suppose that $\sigma_u$ is a positive constant in the remainder of the article to simplify the presentation.

Due to the coupling between flow and transport, a compatibility condition is needed for the discrete velocity \cite{DawsonSunWheeler2004}.  
In particular, an $H(\text{div};\Omega)$-conforming approximation to the velocity can be obtained from $\bm u_h^i$ via the following element-wise post-processing \cite{Cockburn:2009b}: find $\bm U_h^{i} \in \text{RT}_k(\Omega)$ (Raviart-Thomas space) such that for all $E \in \mathcal{E}_h$,
\begin{subequations}
\begin{align}
	(\bm U_h^{i}, \bm v_h)_{E} &= (\bm u_h^i, \bm v_h)_E, \quad \forall \bm v_h \in (\mathbb{P}_{k-1}(E))^d, \label{eq:post_process_a} \\
	\langle \bm U_h^{i} \cdot \bm n, \mu_h\rangle_e = &  \langle \bm u_h^i \cdot \bm n
+\sigma_u (p_h^i -\widehat{p}_h^{\, i}), \mu_h \rangle_{e},  \quad \forall \mu_h\in \mathbb{P}_k(e), \forall e \subset \partial E. \label{eq:post_process_b}
\end{align}
\end{subequations}
The $L^2$ projection on $W_h$ is denoted by $\pi_k$ and the $L^2$ projection on $M_h$ is denoted by $\widehat{\pi}_k$. 
It is easy to check that $\nabla \cdot \bm U_h^i$ is the $L^2$ projection of $f_I^i-f_P^i$ onto $Q_h$:
\begin{equation}\label{eq:projdivu}
\nabla \cdot \bm U_h^i = \pi_k (f_I^i-f_P^i).
\end{equation}

\subsection{The HDG method for the transport problem}
Given $c_h^{i-1} \in W_h$, find $(\bm \theta_h^i, \bm q_h^i, c_h^i, \widehat{c}_h^{\,i}) \in \bm V_h \times \bm V_h \times W_h \times M_h$ such that
\begin{subequations} \label{eq:discrete_transport_step}
	\begin{align}
		(\bm D(\bm{U}_h^i) \bm{\theta}_h^i, \bm{z}_h)_{\mathcal{E}_h} - (\bm q_h^i, \bm z_h)_{\mathcal{E}_h}= 0, \label{eq:discrete_transport_step_a}\\
		(\bm{\theta}_h^i, \bm{v}_h)_{\mathcal{E}_h} - (c_h^i, \nabla \cdot \bm v_h)_{\mathcal{E}_h} + \langle \widehat c_h^{\,i}, \bm v_h \cdot \bm n \rangle_{\partial \mathcal{E}_h}= 0, \label{eq:discrete_transport_step_b}\\
		(\phi \delta_\tau c_h^i, w_h)_{\mathcal{E}_h} - 
		( \bm{q}_h^i + \tfrac{1}{2}\bm{U}_h^i c_h^i, \nabla w_h)_{\mathcal{E}_h}   + \tfrac{1}{2} (\bm U_h^i \cdot \nabla c_h^i, w_h)_{\partial \mathcal{E}_h} - \tfrac{1}{2}\langle \bm{U}_h^i \cdot \bm n c_h^i, w_h \rangle_{\partial \mathcal{E}_h} 
		\label{eq:discrete_transport_step_c}\\  
+  \langle 
		(\widehat{\bm{q}}^i_h + \tfrac{1}{2} \widehat {\bm{U}_h^i c_h^i}\cdot \bm{n} , w_h 
		\rangle_{\partial 
			\mathcal{E}_h}  + \tfrac{1}{2}((f_I^i + f_P^i)c_h^i, w_h)_{\mathcal{E}_h} =  (f_I^i 
		\overline{c}^i,w_h)_{\mathcal{E}_h}, \notag \\
		 \langle 
		(\widehat{\bm{q}}_h^i + \tfrac{1}{2} \widehat {\bm{U}_h^i c_h^i}\cdot \bm{n} , \widehat w_h 
		\rangle_{\partial 
			\mathcal{E}_h}  = 0, 
		\label{eq:discrete_transport_step_d}
	\end{align}
\end{subequations}
for all test functions $(\bm z_h, \bm v_h, w_h, \widehat w_h) \in \bm V_h \times \bm V_h \times W_h \times M_h$. In the above,  we use the shorthand notation: \[
\delta_\tau c_h^i = \frac{c_h^{i}-c_h^{i-1}}{\tau}.
\]
We select the numerical fluxes as
\begin{align*}
\widehat{\bm{q}}_h^{\, i} \cdot \bm n & = \bm q_h^i \cdot \bm n + \sigma_D^i (c_h^i - \widehat{c}_h^{\,i}),\\
\tfrac{1}{2} \widehat{\bm{U}_h^i c_h^i} \cdot \bm n & = \tfrac{1}{2} \bm{U}_h^i \cdot \bm n (c_h^i + \widehat c_h^{\,i})+  |\bm{U}_h^i \cdot \bm n |(c_h^i - \widehat c_h^{\,i}).
\end{align*}
where the diffusive stabilization function is taken to be
\begin{equation}\label{eq:diff_stab_func}
\sigma_D^i = \bm n^T \bm D(\bm U_h^i) \bm n  \geq d_0 >0.
\end{equation}
To initialize the scheme, we define $c_h^0 = \pi_k(c_0)$, $\widehat{c}_h^{\, 0} = \widehat{\pi}_k c_0$.

\begin{remark}
    It is well known that $\sigma_u = \mathcal{O}(1), \sigma_D^i = \mathcal{O}(1)$ is optimal for systems \crefrange{eq:HMHDG_darcy_step_a}{eq:HMHDG_darcy_step_c} and \crefrange{eq:discrete_transport_step_a}{eq:discrete_transport_step_d}\cite{Cockburn:2009}. This is in contrast to the family of interior penalty HDG methods \cite{Cockburn:2009,Fabien:2020b,Wells:2011}, the Lehrenfeld--Sch\"{o}berl HDG methods \cite{lehrenfeld2010hybrid,cockburnH1,Sayas:book}, and the HHO methods \cite{cockburn2016bridging,Pietro:book2} where $\sigma_u = \mathcal{O}(h^{-1}), \sigma_D^i = \mathcal{O}(h^{-1})$ is a typical choice. 
\end{remark}

\section{Discrete existence, uniqueness, and stability}
\label{sec:wellpos}

We begin by introducing in \Cref{ss:HDG_gradient} a number of technical results from \cite{Jiang:2023,Yue:2024} required for our analysis. In \Cref{ss:flow_stab}, we prove the stability and well-posedness of the HDG method for the Darcy problem. Finally, \Cref{ss:trans_stab} is devoted to the stability and well-posedness of the HDG scheme for the transport problem.
\subsection{The HDG Gradient}
\label{ss:HDG_gradient}

Following the ideas of \cite{Buffa:2009,DiPietro:2010,Jiang:2023}, we introduce a discrete HDG distributional gradient $\bm G_h : H^1(\mathcal{E}_h)\times L^2(\partial \mathcal{E}_h)\to \bm V_h$ via the lifting 
\begin{align} \label{eq:disc_grad}
	(\bm G_h( w, \widehat w), \bm v_h)_{\mathcal{E}_h} = (\nabla w, \bm v_h)_{\mathcal{E}_h} - \langle w - \widehat{w}, \bm v_h \cdot \bm n \rangle_{\partial \mathcal{E}_h}, \quad \forall \bm v_h \in\bm V_h.
\end{align}
It is obvious that $\bm G_h(w, \widehat w)$ exists and is uniquely defined. 
We remark that analogous HDG gradients have appeared previously in  \cite{Cockburn:nonlin,Kikuchi:2012,Kirk:2023a}.

For ease of notation, we define the following norms on $W_h \times M_h$ and $Q_h \times M_h$ that serve as discrete analogues of the norms on $H^1(\Omega)$ and $H^1(\Omega)\slash \mathbb{R}$, respectively:
\begin{align} \label{eq:disc_h1_norm}
\norm{ (w_h, \widehat{w}_h)}_{1,h} &= \del{\norm{w_h}_{L^2(\Omega)}^2 + \norm{\bm G_h(w_h,\widehat{w}_h)}_{L^2(\Omega)}^2 + \norm{w_h - \widehat w_h}_{L^2(\partial \mathcal{E}_h)}^2}^{1/2},\\
\norm{ (w_h, \widehat{w}_h)}_{1,h,0} &= \del{\norm{\bm G_h(w_h,\widehat{w}_h)}_{L^2(\Omega)}^2 + \norm{w_h - \widehat w_h}_{L^2(\partial \mathcal{E}_h)}^2}^{1/2}. 
\end{align}

\begin{lemma}[HDG Poincar\'{e} inequality] \label{lem:HDG_poincare}
There exists a constant $C>0$ such that, for all $(w_h,\widehat w_h) \in W_h \times M_h$, it holds that
\begin{equation} \label{eq:disc_embed}
		\norm{w_h}_{L^p(\Omega)} \le C \norm{ (w_h, \widehat{w}_h)}_{1,h}, \quad 2 \le p \le \frac{2d}{d-1}.
\end{equation}
Moreover, if $(w_h,\widehat w_h) \in Q_h \times M_h$, the bound for $p=2$ can be sharpened to
\begin{equation}\label{eq:poinc_on_Qh}
	\norm{w_h}_{L^2(\Omega)} \le C \norm{ (w_h, \widehat{w}_h)}_{1,h,0}.
\end{equation}
\end{lemma}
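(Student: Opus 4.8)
The plan is to reduce the discrete inequality \eqref{eq:disc_embed} to its continuous counterpart by a duality argument built around the HDG gradient $\bm G_h$ and the commuting properties of the Raviart--Thomas interpolant, and then to obtain the sharpened mean-zero bound \eqref{eq:poinc_on_Qh} by a compactness argument. For \eqref{eq:disc_embed} I would write $\norm{w_h}_{L^p(\Omega)} = \sup\{ (w_h,g)_{\mathcal E_h} : \norm{g}_{L^{p'}(\Omega)} \le 1\}$ with $p' = p/(p-1)$, and for each such $g$ solve the continuous auxiliary problem $-\Delta\psi + \psi = g$ in $\Omega$ with homogeneous Neumann data. Setting $\bm\varphi = \nabla\psi$ gives $\nabla\cdot\bm\varphi = \psi - g$ and $\bm\varphi\cdot\bm n = 0$ on $\partial\Omega$, and elliptic regularity on the Lipschitz domain furnishes $\bm\varphi \in W^{1,p'}(\Omega)^d$ with $\norm{\bm\varphi}_{W^{1,p'}} \lesssim \norm{g}_{L^{p'}}$. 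Integrating by parts elementwise and inserting the Raviart--Thomas interpolant $\bm v_h = \Pi^{\mathrm{RT}}\bm\varphi \in \bm V_h$, the pairing $(w_h,g)_{\mathcal E_h}$ can be rewritten, using the definition \eqref{eq:disc_grad} of $\bm G_h$ together with the commuting identities $\nabla\cdot\Pi^{\mathrm{RT}}\bm\varphi = \pi_k(\nabla\cdot\bm\varphi)$ and $\langle (\bm\varphi - \Pi^{\mathrm{RT}}\bm\varphi)\cdot\bm n, \mu_h\rangle_e = 0$ for all $\mu_h \in \mathbb P_k(e)$, in terms of $(\bm G_h(w_h,\widehat w_h), \bm v_h)_{\mathcal E_h}$, the facet coupling $\langle w_h - \widehat w_h, \bm v_h\cdot\bm n\rangle_{\partial\mathcal E_h}$, and interpolation remainders.

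Bounding these contributions by $\norm{\bm G_h(w_h,\widehat w_h)}_{L^2(\Omega)}$, $\norm{w_h - \widehat w_h}_{L^2(\partial\mathcal E_h)}$, and $\norm{w_h}_{L^2(\Omega)}$, each multiplied by a factor controlled by $\norm{\bm\varphi}_{W^{1,p'}}\lesssim\norm{g}_{L^{p'}}$, and taking the supremum over $g$ yields \eqref{eq:disc_embed}. The admissible range of $p$ is dictated by the elliptic regularity $\bm\varphi\in W^{1,p'}$ together with the trace and interpolation bookkeeping, and works out to exactly $2 \le p \le 2d/(d-1)$; this is precisely the discrete Sobolev embedding established for HDG spaces in \cite{Jiang:2023,Yue:2024}, to which I would appeal for the technical details.

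The main obstacle is the one stressed in the introduction: because the stabilization is of size $\mathcal O(1)$ rather than $\mathcal O(h^{-1})$, the function $w_h$ admits no uniform bound in $\mathrm{BV}(\Omega)$. Consequently a naive route through an $H^1$-conforming averaging $w_h^c$ of $w_h$ is unavailable, since controlling $\norm{\nabla w_h^c}_{L^2(\Omega)}$ would require $\mathcal O(h^{-1/2})$-weighted facet jumps; and likewise a crude estimate of the facet coupling by $\norm{w_h - \widehat w_h}_{L^2(\partial\mathcal E_h)}\,\norm{\bm\varphi\cdot\bm n}_{L^2(\partial\mathcal E_h)}$ is fatal, because for a fixed smooth $\bm\varphi$ the skeleton norm $\norm{\bm\varphi\cdot\bm n}_{L^2(\partial\mathcal E_h)}$ grows like $h^{-1/2}$ under refinement. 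The entire purpose of inserting $\Pi^{\mathrm{RT}}\bm\varphi$ is to exploit its moment-preserving property, so that the uncontrolled part of $\bm\varphi\cdot\bm n$ pairs against $w_h - \widehat w_h$ only through an interpolation error carrying a compensating power of $h$. Making this cancellation precise --- tracking that every appearance of the facet difference is matched by an $\mathcal O(h^{1/2})$ factor from interpolation or the multiplicative trace inequality --- is the delicate heart of the argument, and is exactly where the borderline exponent $2d/(d-1)$ is produced.

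Finally, for the mean-zero refinement \eqref{eq:poinc_on_Qh} I would argue by contradiction. If it failed uniformly in $h$, there would exist a sequence $(w_{h_n}, \widehat w_{h_n}) \in Q_{h_n}\times M_{h_n}$ with $\norm{w_{h_n}}_{L^2(\Omega)} = 1$, $\int_\Omega w_{h_n} = 0$, and $\norm{(w_{h_n},\widehat w_{h_n})}_{1,h,0} \to 0$. Since $\norm{(w_{h_n},\widehat w_{h_n})}_{1,h}^2 = \norm{w_{h_n}}_{L^2(\Omega)}^2 + \norm{(w_{h_n},\widehat w_{h_n})}_{1,h,0}^2$ stays bounded, the Rellich--Kondrachov compactness result for HDG spaces \cite{Jiang:2023} extracts a subsequence with $w_{h_n} \to w$ strongly in $L^2(\Omega)$, while the vanishing of $\norm{\bm G_{h_n}(w_{h_n},\widehat w_{h_n})}_{L^2(\Omega)}$ and of the facet differences forces the weak gradient of $w$ to vanish, so $w$ is constant. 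The constraints $\norm{w}_{L^2(\Omega)} = 1$ and $\int_\Omega w = 0$ are then incompatible, which establishes \eqref{eq:poinc_on_Qh}.
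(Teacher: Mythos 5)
For the embedding \eqref{eq:disc_embed} you and the paper are in the same position: the paper simply cites \cite{Jiang:2023}, and you defer to the same references for the technical details. However, the duality sketch you offer as intuition would not survive being carried out in this paper's setting. Your argument needs $\psi\in W^{2,p'}(\Omega)$ (in particular $H^2(\Omega)$ when $p=2$) for the Neumann problem on a general Lipschitz polytope, and this regularity fails on non-convex domains --- precisely the situation the paper cares about (the L-shaped domain with a re-entrant corner in the numerical section). The actual proof in \cite{Jiang:2023,Yue:2024}, reproduced in the paper's appendix for the mean-zero case, avoids elliptic regularity entirely: it lifts $\widehat w_h$ into the Crouzeix--Raviart space, invokes Brenner's broken Sobolev/Poincar\'e inequalities \cite{Brenner:2004} via enrichment/restriction operators, and the borderline exponent $2d/(d-1)$ emerges from requiring the elementwise scaling factor $h_E^{d(1/p-1/2)+1/2}$ to carry a nonnegative power of $h_E$ --- not from elliptic regularity bookkeeping.

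For the sharpened bound \eqref{eq:poinc_on_Qh}, which is the part the paper proves in detail, your route is genuinely different and essentially sound. The paper argues constructively: it bounds $\bigl|\int_\Omega \mathcal{L}_h^{\rm CR}(\widehat w_h)\,\mathrm{d}x\bigr|$ using the zero-mean property of $w_h$, then applies the broken Poincar\'e inequality for Crouzeix--Raviart functions, yielding a constant that depends only on shape-regularity and $\Omega$ and holds mesh-by-mesh. You instead argue by contradiction through \Cref{lem:HDG_RK}. This is not circular within the paper's logic (the appendix proof of \Cref{lem:HDG_RK} uses only the general-case lifting bounds, never \eqref{eq:poinc_on_Qh} itself), and it is the same style of soft argument the paper uses for \Cref{lem:Ehrling}; what you lose is any explicit or per-mesh constant, and what you gain is brevity. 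Two points need patching, though. First, the negation of uniformity gives a failing sequence whose mesh sizes $h_n$ need not accumulate at $0$, which is required to invoke \Cref{lem:HDG_RK}; you must add the dichotomy: if $h_n \ge h_* > 0$ along a subsequence, then (since $\mathcal{H}$ accumulates only at $0$) some fixed mesh repeats infinitely often, and there the claim follows from finite-dimensional norm equivalence once one checks that $\norm{(\cdot,\cdot)}_{1,h,0}$ is positive definite on $Q_h\times M_h$ (vanishing $\bm G_h$ and vanishing facet differences force $w_h$ to be a global constant, and the zero-mean constraint kills it). Second, concluding ``$w$ is constant'' from $\nabla w = 0$ uses connectedness of $\Omega$; worth stating, since that is what makes the constant/zero-mean contradiction work.
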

The proof is in \Cref{ss:appendix_poincare}.
 For later use, we recall a compactness result for HDG discretizations \cite[Lemma 4.4]{Jiang:2023}:
\begin{lemma}\label{lem:HDG_RK}
	Let $\mathcal{E}_h$ be a shape-regular triangulation of a bounded Lipschitz domain $\Omega \subset \mathbb{R}^d$, and $\bm G_h : H^1(\mathcal{E}_h) \times L^2(\partial \mathcal{E}_h) \to \bm V_h$ the discrete HDG gradient defined in \eqref{eq:disc_grad}. 
    Suppose that $\mathcal{H}$ is a countable family of mesh sizes whose unique accumulation point is $0$, $\cbr{(\mathcal{E}_h)}_{h \in \mathcal{H}}$ is a corresponding family of shape-regular triangulations of $\Omega$, and $(w_h, \widehat w_h) \in W_h \times M_h$ is a sequence such that $\norm{(w_h,\widehat{w}_h)}_{1,h}$ is uniformly bounded with respect to $h$, then there exists a (not relabeled) subsequence and a function $w \in H^1(\Omega)$ such that as $h\to 0$,
	\[
	w_h \to w \text{ in } L^2(\Omega), \quad \bm G_h(w_h, \widehat w_h) \rightharpoonup \nabla w \text{ in } (L^2(\Omega))^d, \quad \widehat{w}_h |_{\partial \Omega} \rightharpoonup w|_{\partial \Omega} \text{ in } L^2(\partial \Omega).
	\]
\end{lemma}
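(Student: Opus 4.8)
The plan is to prove this as a discrete Rellich--Kondrachov theorem in three stages: first extract weak limits from the bounded sequence, then identify the weak limit of the HDG gradient with the distributional gradient of the weak $L^2$-limit (which simultaneously shows $w \in H^1(\Omega)$ and pins down the boundary trace), and finally upgrade the weak $L^2$-convergence of $w_h$ to strong convergence. It is natural to regard $\big(W_h\times M_h,\ (w_h,\widehat w_h)\mapsto w_h,\ \bm G_h\big)$ as a gradient discretization and to verify the compactness property in that framework.

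Weak compactness is immediate: since $\|(w_h,\widehat w_h)\|_{1,h}$ is uniformly bounded, both $w_h$ and $\bm G_h(w_h,\widehat w_h)$ are bounded in $L^2(\Omega)$ and $(L^2(\Omega))^d$, so along a subsequence $w_h \rightharpoonup w$ and $\bm G_h(w_h,\widehat w_h)\rightharpoonup \bm G$. To identify $\bm G = \nabla w$, I would take $\bm\varphi \in (C^\infty(\overline\Omega))^d$, set $\bm v_h = \Pi_h\bm\varphi$ (the $L^2$-projection into $\bm V_h$), and insert it into the defining identity \eqref{eq:disc_grad}. Element-wise integration by parts rewrites the right-hand side as $-(w_h,\nabla\cdot\bm v_h)_{\mathcal E_h} + \langle\widehat w_h,\bm v_h\cdot\bm n\rangle_{\partial\mathcal E_h}$. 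Passing to the limit, the volume term tends to $-(w,\nabla\cdot\bm\varphi)_\Omega$; the interior-face contributions close up because $\widehat w_h$ is single valued, while the consistency error is controlled by $\|w_h-\widehat w_h\|_{L^2(\partial\mathcal E_h)}\,\|(I-\Pi_h)\bm\varphi\|_{L^2(\partial\mathcal E_h)} \lesssim h^{1/2}\to 0$ --- here the \emph{unscaled} jump bound suffices precisely because it is paired with the $O(h^{1/2})$ face-projection error. The surviving boundary term identifies $\bm G = \nabla w$ distributionally, hence $w\in H^1(\Omega)$, and a parallel argument on $\partial\Omega$ identifies the weak limit of $\widehat w_h|_{\partial\Omega}$ with $w|_{\partial\Omega}$; the requisite boundedness of $\widehat w_h|_{\partial\Omega}$ in $L^2(\partial\Omega)$ follows from a discrete trace estimate in the $\|\cdot\|_{1,h}$ norm.

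The hard part will be the strong $L^2(\Omega)$-convergence of $w_h$. The usual route --- either a Kolmogorov--Riesz translation estimate or an Oswald/Scott--Zhang $H^1$-reconstruction $\mathcal R_h w_h$ with $\|w_h - \mathcal R_h w_h\|_{L^2}\to 0$ followed by continuous Rellich--Kondrachov --- relies on controlling the \emph{broken} gradient $\nabla_h w_h$ and the jumps in the scaled seminorm $\big(\sum_e h_e^{-1}\|w_h-\widehat w_h\|_{L^2(e)}^2\big)^{1/2}$. This is exactly what fails here: because the classical HDG stabilization is mesh-independent, $\|(w_h,\widehat w_h)\|_{1,h}$ controls only the \emph{unscaled} $\|w_h-\widehat w_h\|_{L^2(\partial\mathcal E_h)}$, so the lifting relation $\bm G_h(w_h,\widehat w_h) = \nabla_h w_h - \mathcal L_h(w_h-\widehat w_h)$ gives only $\|\nabla_h w_h\| \lesssim \|\bm G_h\| + h^{-1/2}\|w_h-\widehat w_h\|_{L^2(\partial\mathcal E_h)}$, which need not be bounded, and a naive translation estimate degenerates like $|\xi|^2 h^{-1}$. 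The resolution is to avoid the broken gradient and work with $\bm G_h$ directly: I would build a conforming reconstruction from the single-valued facet data $\widehat w_h$ whose true gradient is controlled by $\|\bm G_h\|$ (not by $\nabla_h w_h$), show it stays $O(h^{1/2})$-close to $w_h$ in $L^2$, and then invoke continuous Rellich--Kondrachov; equivalently, one proves a uniform-in-$h$ translation bound directly from $\|\bm G_h\|$ and the unscaled jump term. Establishing this is the substance of \cite{Jiang:2023}, and I would reproduce its covering/reconstruction argument, using the HDG Poincar\'e inequality of \Cref{lem:HDG_poincare} to absorb the lower-order contributions. Combining the three stages and passing to a diagonal subsequence yields the claimed convergences.
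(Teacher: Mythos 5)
Your proposal is correct and takes essentially the same route as the paper, which itself reproduces the argument of Jiang (2023): the strong $L^2$ convergence rests on a reconstruction built from the single-valued facet data $\widehat w_h$ --- in the paper, the Crouzeix--Raviart lifting $\mathcal{L}_h^{\rm CR}(\widehat w_h)$, whose broken gradient is the piecewise-constant $L^2$-projection of $\bm G_h(w_h,\widehat w_h)$ and which is $O(h^{1/2})$-close to $w_h$ in $L^2(\Omega)$ --- precisely the mechanism you identify as the crux, with the unscaled jump term sufficing. The only material difference is the identification step: you test \eqref{eq:disc_grad} with the $L^2$-projection of a smooth field and control an $O(h^{1/2})$ consistency error on the faces, whereas the paper tests with the BDM projection, whose facet-exactness \eqref{eq:BDM_proj_2} and commuting property \eqref{eq:commutativity} make those face terms vanish identically (at the price of requiring $k\ge 1$); both variants are sound, and yours has the minor advantage of covering $k=0$.
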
	
\begin{proof}
 	See \Cref{ss:append_compact}.
\end{proof}

\subsection{Stability bounds for the flow problem}
\label{ss:flow_stab}

\begin{theorem} \label{lem:stab_flow}
	Suppose the triplet $(\bm u_h^i, p_h^i, \widehat p_h^{\,i}) \in  \bm V_h \times Q_h \times M_h$ solves the discrete Darcy problem \crefrange{eq:HMHDG_darcy_step_a}{eq:HMHDG_darcy_step_c} for $1 \le i \le N$. Then, there exists a constant $C>0$, independent of $h$ and $\tau$, such that
	\begin{multline} \label{eq:flow_stab_LDG-H}
	\max_{1 \le i \le N} \del{\norm[0]{\bm u_h^i}_{L^2(\Omega)} + \norm[0]{\bm G_h(p_h^i,\widehat p_h^{\,i})}_{L^2(\Omega)} + \norm[0]{ \sigma_u^{1/2}(p_h^i - \widehat p_h^{\,i})}_{L^2(\partial \mathcal{E}_h)}} \\ \le C\norm[0]{f_I - f_P}_{\ell^\infty(0,T;L^2(\Omega))}.
	\end{multline}
	Moreover, the post-processed velocity $\bm U_h^{i}$ satisfies
	\begin{equation} \label{eq:flow_stab-LDG-H-star}
	\max_{1 \le i \le N} \norm[0]{\bm U_h^{i}}_{L^2(\Omega)} \le C\norm[0]{f_I - f_P}_{\ell^\infty(0,T;L^2(\Omega))}.
	\end{equation}
\end{theorem}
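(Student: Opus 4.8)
The plan is to establish the first bound \eqref{eq:flow_stab_LDG-H} by a standard HDG energy argument, and then to deduce the velocity bound \eqref{eq:flow_stab-LDG-H-star} from a local scaling estimate for the Raviart--Thomas post-processing. For the energy identity, I would test \eqref{eq:HMHDG_darcy_step_a} with $\bm r_h = \bm u_h^i$, \eqref{eq:HMHDG_darcy_step_b} with $s_h = p_h^i$, and \eqref{eq:HMHDG_darcy_step_c} with $\widehat s_h = \widehat p_h^{\,i}$. Adding the first two equations, the volume terms $\pm(p_h^i, \nabla\cdot\bm u_h^i)_{\mathcal{E}_h}$ cancel; subtracting the third then removes the facet term $\langle \widehat p_h^{\,i}, \bm u_h^i\cdot\bm n\rangle_{\partial\mathcal{E}_h}$ and collapses the stabilization contributions into a single positive-definite term, yielding
\[
(\bm K^{-1}(c_h^{i-1})\bm u_h^i, \bm u_h^i)_{\mathcal{E}_h} + \|\sigma_u^{1/2}(p_h^i - \widehat p_h^{\,i})\|_{L^2(\partial\mathcal{E}_h)}^2 = (f_I^i - f_P^i, p_h^i)_{\mathcal{E}_h}.
\]
By the uniform ellipticity of $\bm K$, the left-hand side dominates $k_1^{-1}\|\bm u_h^i\|_{L^2(\Omega)}^2$ plus the stabilization term.

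The crux of the first part is controlling the right-hand side without an a priori bound on $\|p_h^i\|_{L^2(\Omega)}$, and I would resolve this with two observations. First, rewriting \eqref{eq:HMHDG_darcy_step_a} via integration by parts and comparing with the definition \eqref{eq:disc_grad} shows that $\bm G_h(p_h^i, \widehat p_h^{\,i})$ is exactly the $L^2(\Omega)$-projection of $-\bm K^{-1}(c_h^{i-1})\bm u_h^i$ onto $\bm V_h$; testing this identity against $\bm G_h(p_h^i, \widehat p_h^{\,i})$ and using ellipticity gives $\|\bm G_h(p_h^i, \widehat p_h^{\,i})\|_{L^2(\Omega)} \le k_0^{-1}\|\bm u_h^i\|_{L^2(\Omega)}$. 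Second, since $p_h^i \in Q_h$ has zero mean, the sharpened HDG Poincar\'e inequality \eqref{eq:poinc_on_Qh} bounds $\|p_h^i\|_{L^2(\Omega)}$ by $\|(p_h^i, \widehat p_h^{\,i})\|_{1,h,0}$, that is, by $\|\bm G_h(p_h^i, \widehat p_h^{\,i})\|_{L^2(\Omega)}$ and $\|p_h^i - \widehat p_h^{\,i}\|_{L^2(\partial\mathcal{E}_h)}$. Chaining these bounds with Cauchy--Schwarz on the right-hand side and absorbing the velocity and stabilization terms by Young's inequality closes the estimate; taking the maximum over $i$ produces the $\ell^\infty$ norm of $f_I - f_P$.

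For the second estimate, I would argue element by element using the defining relations \eqref{eq:post_process_a}--\eqref{eq:post_process_b}. A scaling argument on the reference simplex, combined with the contravariant Piola transform that preserves the interior and normal moments defining $\bm U_h^i$, yields a local norm equivalence of the form
\[
\|\bm U_h^i\|_{L^2(E)}^2 \lesssim \|\bm u_h^i\|_{L^2(E)}^2 + h_E\sum_{e\subset\partial E}\|\bm u_h^i\cdot\bm n + \sigma_u(p_h^i - \widehat p_h^{\,i})\|_{L^2(e)}^2,
\]
where the interior moments contribute $\|\bm u_h^i\|_{L^2(E)}^2$ by \eqref{eq:post_process_a} and the normal moments contribute the facet term by \eqref{eq:post_process_b}. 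A discrete inverse trace inequality absorbs $h_E\|\bm u_h^i\cdot\bm n\|_{L^2(e)}^2$ into $\|\bm u_h^i\|_{L^2(E)}^2$, and the remaining facet term is controlled, using boundedness of $h_E$ and that $\sigma_u$ is constant, by $\|\sigma_u^{1/2}(p_h^i - \widehat p_h^{\,i})\|_{L^2(\partial\mathcal{E}_h)}^2$. Summing over $E$ and invoking \eqref{eq:flow_stab_LDG-H} yields \eqref{eq:flow_stab-LDG-H-star}.

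I expect the main obstacle to be bookkeeping the powers of $h_E$ in the Raviart--Thomas scaling estimate: one must verify that the normal-moment degrees of freedom, transported by the Piola map, enter with a \emph{positive} power of $h_E$ rather than $h_E^{-1}$. This is precisely what keeps the post-processed velocity uniformly bounded when the classic HDG choice $\sigma_u = \mathcal{O}(1)$ is used, since the stabilization jump is then controlled only at $\mathcal{O}(1)$ scale. By contrast, in the first part the only delicate point is the circular dependence between $p_h^i$ and $\bm u_h^i$, which is dispatched cleanly by the gradient--velocity identity together with the zero-mean Poincar\'e inequality.
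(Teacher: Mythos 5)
Your proposal is correct and follows essentially the same route as the paper's proof: the same energy identity obtained by testing with $(\bm u_h^i, p_h^i, \widehat p_h^{\,i})$ (equivalently, the paper's choice $\widehat s_h = -\widehat p_h^{\,i}$ with summation), the same observation that $\bm G_h(p_h^i,\widehat p_h^{\,i})$ is the $\bm V_h$-projection of $-\bm K^{-1}(c_h^{i-1})\bm u_h^i$, and the same use of the zero-mean HDG Poincar\'e inequality \eqref{eq:poinc_on_Qh} to control $(f_I^i - f_P^i, p_h^i)_{\mathcal{E}_h}$ before absorbing with Young's inequality. The only cosmetic difference is in the post-processing bound: the paper applies the Raviart--Thomas reference-element/Piola scaling argument to the difference $\bm \eta_h^i = \bm U_h^i - \bm u_h^i$, whose interior moments vanish so that only the normal traces $\bm\eta_h^i\cdot\bm n = \sigma_u(p_h^i - \widehat p_h^{\,i})$ survive, whereas you apply it directly to $\bm U_h^i$ and dispose of the interior and normal $\bm u_h^i$-contributions via $L^2$-projection stability and an inverse trace inequality -- both routes produce the same $h^{1/2}$-weighted facet estimate and hence \eqref{eq:flow_stab-LDG-H-star}.
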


\begin{proof}
 For fixed $1 \le i \le N$, set $(\bm r_h, s_h, \widehat s_h) = (\bm u_h^i, p_h^i, -\widehat p_h^i)$ in \crefrange{eq:HMHDG_darcy_step_a}{eq:HMHDG_darcy_step_c} and sum the resulting equations to find
    \begin{equation}\label{eq:flow_conv_pass}
			k_1^{-1} \norm[0]{\bm u_h^i}_{L^2(\Omega)}^2 + \norm[0]{ 
			\sigma_{u}^{1/2}(p_h^i - \widehat{p}_h^{\,i})}_{L^2(\partial \mathcal{E}_h)}^2 \le (f_I^i - 
			f_P^i,p_h^i)_{\mathcal{E}_h}.
	\end{equation}
From \cref{lem:HDG_poincare}, it holds that
\[
\Vert p_h^i \Vert_{L^2(\Omega)}^2 \lesssim \Vert\bm G_h(p_h^i,\widehat{p}_h^{\,i})\Vert_{L^2(\Omega)}^2 + \Vert p_h^i - \widehat p_h^{\,i}\Vert_{L^2(\partial \mathcal{E}_h)}^2.
\]
Now, note that by \eqref{eq:HMHDG_darcy_step_a} and by the definition of the HDG gradient, $\bm G(p_h^i,\widehat p_h^{\, i})$ satisfies:
\begin{equation}\label{eq:flowwithG}
(\bm G_h(p_h^i,\widehat p_h^{\,i}),\bm v_h)_{\mathcal{E}_h}  = - (\bm K^{-1}(c_h^{i-1}) \bm u_h^i, \bm v_h)_{\mathcal{E}_h}, \quad \forall \bm v_h\in \bm V_h.
\end{equation}
Choosing $\bm v_h = \bm G_h(p_h^i,\widehat p_h^{\,i})$ above and using the boundedness of $\bm K^{-1}$ yields
\[
\norm[1]{\bm G_h(p_h^i,\widehat{p}_h^{\, i})}_{L^2(\Omega)}  \lesssim \norm[0]{\bm u_h^i}_{L^2(\Omega)}.
\]
The result then follows by applying Cauchy--Schwarz's inequality and Young's inequality to \eqref{eq:flow_conv_pass}. 
		As for \cref{eq:flow_stab-LDG-H-star}, the triangle inequality and \cref{eq:flow_stab_LDG-H} yield
		\begin{equation}
			\norm[0]{ \bm U_h^{i} }_{L^2(\Omega)} \lesssim  \norm[0]{ \bm U_h^{i} - \bm u_h^i }_{L^2(\Omega)}  + \norm[0]{f_I^i-f_P^i}_{L^2(\Omega)}.
		\end{equation}
		Observe that $\bm \eta_h^i = \bm U_h^{i} - \bm u_h^i$ satisfies for all $E \in \mathcal{E}_h$,
		\begin{align*}
			(\bm \eta_h^i, \bm v_h)_{E} &= 0, &&\forall \bm v_h \in (\mathbb{P}_{k-1}(E))^d, \\
			\langle \bm \eta_h^i \cdot \bm n, \widehat{w}_h \rangle_{e} &= \langle \sigma_u(p_h^i - \widehat p_h^{\,i}), \widehat{w}_h \rangle_{e}, && \forall \widehat{w}_h \in \mathbb{P}_k(e), \forall e \subset \partial E,
		\end{align*}
		and thus by a finite-dimensional scaling argument (see \Cref{app:scaling} for details), 
		\begin{equation} \label{eq:par_el_calc}
			\norm[0]{ \bm \eta_h^i}_{L^2(\Omega)} \lesssim h^{1/2}  \norm[0]{\sigma_u^{1/2}(p_h^i - \widehat{p}_h^{\,i})}_{L^2(\partial \mathcal{E}_h)}.
		\end{equation}
		Consequently, 
        \cref{eq:flow_stab-LDG-H-star}  follows from \cref{eq:flow_stab_LDG-H}.
\end{proof}
		An immediate consequence of \Cref{lem:stab_flow} is the following:
		\begin{corollary}
			For each $1 \le i \le N$, there exists a unique solution $(\bm u_h^i, p_h^i, \widehat p_h^i) \in \bm{V}_h \times Q_h \times M_h$ to the discrete Darcy problem \crefrange{eq:HMHDG_darcy_step_a}{eq:HMHDG_darcy_step_c}.
		\end{corollary}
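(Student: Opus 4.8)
The plan is to exploit the fact that the discrete Darcy system \crefrange{eq:HMHDG_darcy_step_a}{eq:HMHDG_darcy_step_c} is a \emph{linear} system posed on the \emph{finite-dimensional} space $\bm V_h \times Q_h \times M_h$, with test functions drawn from the same space. For such a square linear system, existence and uniqueness are equivalent to injectivity, so it suffices to show that the homogeneous problem—obtained by setting the data $f_I^i - f_P^i = 0$—admits only the trivial solution.

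First I would invoke \Cref{lem:stab_flow}, whose stability estimate \eqref{eq:flow_stab_LDG-H} holds for the solution of \crefrange{eq:HMHDG_darcy_step_a}{eq:HMHDG_darcy_step_c} with an arbitrary right-hand side. Applying it with $f_I^i - f_P^i = 0$ forces
\[
\norm[0]{\bm u_h^i}_{L^2(\Omega)} = \norm[0]{\bm G_h(p_h^i, \widehat p_h^{\,i})}_{L^2(\Omega)} = \norm[0]{\sigma_u^{1/2}(p_h^i - \widehat p_h^{\,i})}_{L^2(\partial \mathcal{E}_h)} = 0.
\]
In particular $\bm u_h^i = 0$, the HDG gradient of the pressure pair vanishes, and (recalling $\sigma_u > 0$) we have $p_h^i = \widehat p_h^{\,i}$ on $\partial \mathcal{E}_h$.

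The remaining step is to deduce that the pressure itself vanishes. Here the crucial point is that we work on the mean-zero subspace $Q_h = W_h \cap L^2_0(\Omega)$, since constants lie in the kernel of $\bm G_h$ and would otherwise obstruct uniqueness. Because $(p_h^i, \widehat p_h^{\,i}) \in Q_h \times M_h$, the sharpened HDG Poincar\'e inequality \eqref{eq:poinc_on_Qh} applies and gives
\[
\norm[0]{p_h^i}_{L^2(\Omega)} \le C \norm[0]{(p_h^i, \widehat p_h^{\,i})}_{1,h,0} = C \del{\norm[0]{\bm G_h(p_h^i, \widehat p_h^{\,i})}_{L^2(\Omega)}^2 + \norm[0]{p_h^i - \widehat p_h^{\,i}}_{L^2(\partial \mathcal{E}_h)}^2}^{1/2} = 0,
\]
so $p_h^i = 0$; combined with $p_h^i = \widehat p_h^{\,i}$ on the faces this yields $\widehat p_h^{\,i} = 0$ as well. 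Thus the only solution of the homogeneous system is the zero triplet, and uniqueness—hence existence, by the dimension count—follows.

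I do not anticipate any genuine obstacle: the argument is the standard reduction ``a priori stability implies well-posedness for a square linear system.'' The only point requiring a little care is that uniqueness of the \emph{pressure} (rather than merely of its HDG gradient) hinges on restricting to $Q_h$ and on the sharpened bound \eqref{eq:poinc_on_Qh}; on the full space $W_h$ the problem would only be well-posed modulo additive constants, so the choice of the mean-zero space is exactly what closes the argument.
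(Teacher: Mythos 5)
Your proposal is correct and follows essentially the same route as the paper: both reduce well-posedness of the square finite-dimensional linear system to injectivity, use the stability estimate \eqref{eq:flow_stab_LDG-H} with vanishing data to kill $\bm u_h^i$, the HDG gradient, and the jumps, and then invoke the mean-zero structure of $Q_h$ (via the sharpened Poincar\'e inequality \eqref{eq:poinc_on_Qh}) to conclude $p_h^i = 0$ and hence $\widehat p_h^{\,i} = 0$. The paper's proof is simply a terser statement of exactly this argument, so no further comparison is needed.
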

		\begin{proof}
			The result follows from \cref{eq:flow_stab_LDG-H} and the fact that $Q_h \subset L_0^2(\Omega)$, since for each fixed $c_h^{i-1} \in W_h$,  \crefrange{eq:HMHDG_darcy_step_a}{eq:HMHDG_darcy_step_c} is a square linear system in the finite dimensional space $\bm{V}_h \times Q_h \times M_h$. Indeed, if $f_I^i = f_P^i = 0$, then $\bm u_h^i = \bm 0$, $p_h^i = 0$, and $\widehat{p}_h^{\,i} = 0$.
		\end{proof}

\subsection{Stability for the transport problem}

\label{ss:trans_stab}
\begin{theorem} \label{lem:stab_transport}
Given an integer $1 \le m \le N$, 
the quadruplet $(\bm \theta_h^i, \bm q_h^i, c_h^i, \widehat c_h^{\,i}) \in \bm V_h \times \bm V_h \times W_h \times M_h$ solves
\crefrange{eq:discrete_transport_step_a}{eq:discrete_transport_step_d} for the transport problem \crefrange{eq:transport_prob_rewritten_a}{eq:transport_prob_rewritten_c}, \cref{eq:transport_prob_rewritten_BC}. Then, for all $1 \le m \le N$, it holds that
	\begin{align}
		\phi_0 \norm[1]{ c_h^m}_{L^2(\Omega)}^2 
+ \tau \sum_{i=1}^{m} \del{ \norm[1]{ (f_I^i)^{1/2} c_h^i}_{L^2(\Omega)}^2 + \Vert (\sigma_D^i +  |\bm{U}_h^i \cdot \bm n | )^{1/2} (c_h^i - \widehat{c}_h^{\,i})\Vert_{L^2(\partial\mathcal{E}_h)}^2}\nonumber\\
+ \tau \sum_{i=1}^{m} \del{ \tau \norm[1]{\phi^{1/2}\delta_\tau c_h^i}_{L^2(\Omega)}^2 + \norm[1]{\bm{D}^{1/2}(\bm U_h^i)\bm \theta_h^i}_{L^2(\Omega)}^2 }
				  \le   \phi_1 \norm[1]{  c_0}_{L^2(\Omega)}^2 \nonumber\\
      + \tau \sum_{i=1}^m \norm[1]{ (f_I^i)^{1/2} \overline{c}^i }_{L^2(\Omega)}^2.
      \label{eq:boundconc}
	\end{align}
    In addition, there exists a constant $C>0$, independent of $h$ and $\tau$, such that
  \begin{equation}\label{eq:norm1hconc}
  \tau \sum_{i=1}^N \Vert (c_h^i, \widehat c_h^{\,i})\Vert_{1,h}^2\lesssim C.
  \end{equation}
\end{theorem}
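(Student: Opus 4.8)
The plan is to derive a discrete energy identity by testing the scheme with the discrete solution itself and then to sum over the time levels. Concretely, for each fixed $i$ I would choose $\bm z_h = \bm\theta_h^i$ in \eqref{eq:discrete_transport_step_a}, $\bm v_h = \bm q_h^i$ in \eqref{eq:discrete_transport_step_b}, $w_h = c_h^i$ in \eqref{eq:discrete_transport_step_c}, and $\widehat w_h = \widehat c_h^{\,i}$ in \eqref{eq:discrete_transport_step_d}, and add the four resulting identities. The two mixed equations \eqref{eq:discrete_transport_step_a}--\eqref{eq:discrete_transport_step_b} combine to give $(c_h^i,\nabla\cdot\bm q_h^i)_{\mathcal{E}_h} - \langle\widehat c_h^{\,i},\bm q_h^i\cdot\bm n\rangle_{\partial\mathcal{E}_h} = \|\bm D^{1/2}(\bm U_h^i)\bm\theta_h^i\|_{L^2(\Omega)}^2$, which will produce the diffusion--dispersion term in \eqref{eq:boundconc}. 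An element-wise integration by parts on $-(\bm q_h^i,\nabla c_h^i)_{\mathcal{E}_h}$ then rewrites this contribution as $\|\bm D^{1/2}(\bm U_h^i)\bm\theta_h^i\|_{L^2(\Omega)}^2 - \langle\bm q_h^i\cdot\bm n,\, c_h^i - \widehat c_h^{\,i}\rangle_{\partial\mathcal{E}_h}$.

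The crux is the treatment of the first-order terms. First, the two volume convection contributions $-\tfrac12(\bm U_h^i c_h^i,\nabla c_h^i)_{\mathcal{E}_h}$ and $\tfrac12(\bm U_h^i\cdot\nabla c_h^i, c_h^i)_{\mathcal{E}_h}$ cancel pointwise, since $\bm U_h^i c_h^i\cdot\nabla c_h^i = c_h^i(\bm U_h^i\cdot\nabla c_h^i)$; this is exactly the gain from the skew-symmetrized formulation. Second, I would use \eqref{eq:discrete_transport_step_d} with $\widehat w_h = \widehat c_h^{\,i}$ to rewrite every facet flux tested against $c_h^i$ as the same flux tested against the jump $c_h^i - \widehat c_h^{\,i}$. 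Substituting the definitions of $\widehat{\bm q}_h^{\,i}\cdot\bm n$ and $\tfrac12\widehat{\bm U_h^i c_h^i}\cdot\bm n$, the $\bm q_h^i$-flux cancels the boundary remainder from the integration by parts, the diffusive stabilization yields $\|(\sigma_D^i)^{1/2}(c_h^i - \widehat c_h^{\,i})\|_{L^2(\partial\mathcal{E}_h)}^2$, and the Lax--Friedrichs part yields the positive term $\||\bm U_h^i\cdot\bm n|^{1/2}(c_h^i - \widehat c_h^{\,i})\|_{L^2(\partial\mathcal{E}_h)}^2$. The central part $\tfrac12\langle\bm U_h^i\cdot\bm n(c_h^i + \widehat c_h^{\,i}),\, c_h^i - \widehat c_h^{\,i}\rangle_{\partial\mathcal{E}_h}$ combines with $-\tfrac12\langle\bm U_h^i\cdot\bm n\, c_h^i, c_h^i\rangle_{\partial\mathcal{E}_h}$ to leave only $-\tfrac12\sum_E\langle\bm U_h^i\cdot\bm n,\,(\widehat c_h^{\,i})^2\rangle_{\partial E}$, which vanishes because $\widehat c_h^{\,i}$ is single-valued on interior faces, $\bm U_h^i\cdot\bm n$ is continuous there (as $\bm U_h^i\in\mathrm{RT}_k(\Omega)$), and $\bm U_h^i\cdot\bm n = 0$ on $\partial\Omega$ by \eqref{eq:HMHDG_darcy_step_c}--\eqref{eq:post_process_b}. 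I expect this cancellation to be the main obstacle: it is precisely where the $H(\mathrm{div})$ reconstruction, the single-valued facet unknown, and the no-flux boundary condition must all be used at once, and it is the reason the scheme is unconditionally stable despite the unbounded velocity.

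For the time term, I would apply $a(a-b) = \tfrac12(a^2 - b^2 + (a-b)^2)$ to $(\phi\delta_\tau c_h^i, c_h^i)_{\mathcal{E}_h}$, giving $\tfrac{1}{2\tau}(\|\phi^{1/2}c_h^i\|_{L^2(\Omega)}^2 - \|\phi^{1/2}c_h^{i-1}\|_{L^2(\Omega)}^2) + \tfrac\tau2\|\phi^{1/2}\delta_\tau c_h^i\|_{L^2(\Omega)}^2$. Collecting all contributions yields a discrete energy identity at each step; I would multiply by $2\tau$, sum from $i=1$ to $m$, telescope the porosity terms, and bound them using $\phi_0 \le \phi \le \phi_1$ together with $\|c_h^0\|_{L^2(\Omega)}\le\|c_0\|_{L^2(\Omega)}$. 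The only term needing an inequality is $(f_I^i\overline c^i, c_h^i)_{\mathcal{E}_h}$; Cauchy--Schwarz in the weighted product $(f_I^i)^{1/2}$ followed by Young's inequality absorbs a fraction of $\|(f_I^i)^{1/2}c_h^i\|_{L^2(\Omega)}^2$ into the left-hand side, keeping the reaction term there and leaving $\|(f_I^i)^{1/2}\overline c^i\|_{L^2(\Omega)}^2$ on the right (the nonnegative $f_P$ reaction term is simply discarded). This is \eqref{eq:boundconc}.

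Finally, to obtain \eqref{eq:norm1hconc} I would control each of the three pieces of $\|(c_h^i,\widehat c_h^{\,i})\|_{1,h}^2$. Comparing the definition \eqref{eq:disc_grad} of $\bm G_h$ with \eqref{eq:discrete_transport_step_b} (after integration by parts) shows $\bm G_h(c_h^i,\widehat c_h^{\,i}) = -\bm\theta_h^i$, so the gradient term equals $\|\bm\theta_h^i\|_{L^2(\Omega)}^2 \le d_0^{-1}\|\bm D^{1/2}(\bm U_h^i)\bm\theta_h^i\|_{L^2(\Omega)}^2$ by the ellipticity lower bound in \eqref{eq:diffdisp}; notably only that lower bound is used, so the degeneracy of $\bm D$ as $|\bm U_h^i|\to\infty$ causes no difficulty here. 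The facet term is bounded by $d_0^{-1}\|(\sigma_D^i + |\bm U_h^i\cdot\bm n|)^{1/2}(c_h^i - \widehat c_h^{\,i})\|_{L^2(\partial\mathcal{E}_h)}^2$ since $\sigma_D^i \ge d_0$ by \eqref{eq:diff_stab_func}, and the $L^2(\Omega)$ term is controlled because \eqref{eq:boundconc} gives a uniform $\ell^\infty(0,T;L^2(\Omega))$ bound on $c_h^i$, whence $\tau\sum_{i=1}^N\|c_h^i\|_{L^2(\Omega)}^2 \le T\max_{1\le i\le N}\|c_h^i\|_{L^2(\Omega)}^2$. Summing the three bounds over $i$ with weight $\tau$ and invoking \eqref{eq:boundconc} yields \eqref{eq:norm1hconc}, provided the data term $\tau\sum_i\|(f_I^i)^{1/2}\overline c^i\|_{L^2(\Omega)}^2$ is itself bounded, which follows from the assumptions on $f_I$ and $\overline c$ via Hölder's inequality.
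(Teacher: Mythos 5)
Your proposal is correct and follows essentially the same route as the paper's proof: testing with the discrete solution (your use of \eqref{eq:discrete_transport_step_d} with $\widehat w_h = \widehat c_h^{\,i}$ to convert facet fluxes into jumps is equivalent to the paper's choice $\widehat w_h = -\widehat c_h^{\,i}$), cancellation of the skew-symmetrized volume convection terms, the facet identity leaving $-\tfrac12\langle \bm U_h^i\cdot\bm n\,\widehat c_h^{\,i},\widehat c_h^{\,i}\rangle_{\partial\mathcal{E}_h}$ which vanishes by $H(\mathrm{div})$-conformity, single-valuedness of $\widehat c_h^{\,i}$ and the no-flux condition, followed by the same time-difference identity and Young absorption of the source term. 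The identification $\bm\theta_h^i = -\bm G_h(c_h^i,\widehat c_h^{\,i})$ and the lower bounds $\bm D \ge d_0 \bm I$, $\sigma_D^i \ge d_0$ used for \eqref{eq:norm1hconc} also coincide with the paper's argument, with your write-up merely making the $L^2$ and facet contributions explicit where the paper leaves them implicit.
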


\begin{proof}
Note that \eqref{eq:discrete_transport_step_b} is equivalent to
\begin{equation}\label{eq:firsteq}
(\bm{\theta}_h^i, \bm{v}_h)_{\mathcal{E}_h} + (\nabla c_h^i, \bm v_h)_{\mathcal{E}_h} - \langle c_h^i - \widehat c_h^{\,i} , \bm v_h \cdot \bm n \rangle_{\partial \mathcal{E}_h}= 0.
\end{equation}
Choosing $\bm{v}_h  = \bm q_h^i$ above and $(\bm{z}_h, w_h, \widehat w_h) = (\bm{\theta}_h^i, c_h^i, -\widehat{c}_h^{\,i})$
in \eqref{eq:discrete_transport_step_a}, \eqref{eq:discrete_transport_step_c}, \eqref{eq:discrete_transport_step_d} and summing the resulting equations,
	\begin{align*}
		(\bm D(\bm{U}_h^i) \bm{\theta}_h^i, \bm \theta_h^i)_{\mathcal{E}_h} +	(\phi \delta_\tau c_h^i, c_h^i)_{\mathcal{E}_h} + \tfrac{1}{2}((f_I^i + f_P^i)c_h^i, c_h^i)_{\mathcal{E}_h}  \\
+ \langle (\sigma_D^i + |\bm{U}_h^i \cdot \bm n |) (c_h^i - \widehat{c}_h^{\,i}) , c_h^i - \widehat{c}_h^{\,i} \rangle_{\partial \mathcal{E}_h}  
   - \tfrac{1}{2} \langle \bm{U}_h^i \cdot \bm n \, {c}_h^i , c_h^i 
		\rangle_{\partial 
			\mathcal{E}_h}  \\
+ \langle \tfrac{1}{2} \bm{U}_h^i \cdot \bm n (\widehat {c}_h^i + c_h^i), c_h^i  - \widehat{c}_h^{\,i}
		\rangle_{\partial 
			\mathcal{E}_h}=  (f_I^i \,
		\overline{c}^i,c_h^i)_{\mathcal{E}_h}.
	\end{align*}
	Now, observe that
	\[
	- \tfrac{1}{2} \langle \bm{U}_h^i \cdot \bm n \, {c}_h^i , c_h^i 
	\rangle_{\partial 
		\mathcal{E}_h}  + \langle 
	\tfrac{1}{2} \bm{U}_h^i \cdot \bm n (\widehat {c}_h^{\,i} + c_h^i), c_h^i  - \widehat{c}_h^{\,i}
	\rangle_{\partial 
		\mathcal{E}_h} = - \tfrac{1}{2} \langle \bm U_h^i \cdot \bm n \, \widehat c_h^{\,i}, \widehat c_h^{\,i} \rangle_{\partial \mathcal{E}_h},
	\]
	and therefore since $\bm U_h^i \in H(\text{div};\Omega)$, the term above vanishes and we are left with
	\begin{align*} 
		(\bm D(\bm{U}_h^i) \bm{\theta}_h^i, \bm \theta_h^i)_{\mathcal{E}_h} +	(\phi \delta_\tau c_h^i, c_h^i)_{\mathcal{E}_h} + \tfrac{1}{2}((f_I^i + f_P^i)c_h^i, c_h^i)_{\mathcal{E}_h}  \\ \notag
+ \langle \del[1]{ \sigma_D^i +   |\bm{U}_h^i \cdot \bm n | }(c_h^i - \widehat{c}_h^{\,i}) , c_h^i - \widehat{c}_h^{\,i}
		\rangle_{\partial 
			\mathcal{E}_h} =  (f_I^i 
		\overline{c}^i,c_h^i)_{\mathcal{E}_h}.
	\end{align*}
Using the following identity
 \[
 (\phi \delta_\tau c_h^i, c_h^i)_{\mathcal{E}_h}
 = \frac{1}{2\tau} \left( \Vert \phi^{1/2} c_h^i \Vert_{L^2(\Omega)}^2
 - \Vert \phi^{1/2} c_h^{i-1} \Vert_{L^2(\Omega)}^2\right)
 +\frac{\tau}{2} \Vert \phi^{1/2} \delta_\tau c_h^i\Vert_{L^2(\Omega)}^2,
 \]
 we obtain
	\begin{align*}
 \tau^2 \norm[1]{\phi^{1/2}\delta_\tau c_h^i}_{L^2(\Omega)}^2 +\norm[0]{\phi^{1/2} c_h^{i}}_{L^2(\Omega)}^2 - \norm[0]{\phi^{1/2} c_h^{i-1}}_{L^2(\Omega)}^2 +
		2\tau (\bm D(\bm{U}_h^i) \bm{\theta}_h^i, \bm \theta_h^i)_{\mathcal{E}_h} \\ + 2\tau((f_I^i + f_P^i)c_h^i, c_h^i)_{\mathcal{E}_h}  
+ 2\tau\langle \del[1]{ \sigma_D^i +   |\bm{U}_h^i \cdot \bm n | }(c_h^i - \widehat{c}_h^{\,i}) , c_h^i - \widehat{c}_h^{\,i}
		\rangle_{\partial \mathcal{E}_h}  \le 2\tau(f_I^i 
		\overline{c}^i,c_h^i)_{\mathcal{E}_h}. 
	\end{align*}
	Next, with the assumption on the data (see \eqref{eq:porosity} and \eqref{eq:diffdisp}), we obtain
	%
	%
	%
	\begin{align*}
 \tau^2 \norm[1]{\phi^{1/2}\delta_\tau c_h^i}_{L^2(\Omega)}^2 +\norm[0]{\phi^{1/2} c_h^{i}}_{L^2(\Omega)}^2 - \norm[0]{\phi^{1/2} c_h^{i-1}}_{L^2(\Omega)}^2 
		+ 2\tau \norm[1]{\bm{D}^{1/2}(\bm U_h^i)\bm \theta_h^i}_{L^2(\Omega)}^2 \\ + 2\tau \norm[1]{ (f_I^i)^{1/2} c_h^i}_{L^2(\Omega)}^2 
+ 2\tau\langle \del[1]{ \sigma_D^i +     |\bm{U}_h^i \cdot \bm n | }(c_h^i - \widehat{c}_h^{\,i}) , c_h^i - \widehat{c}_h^i
		\rangle_{\partial \mathcal{E}_h}  \le  2\tau(f_I^i 
		\overline{c}^i,c_h^i)_{\mathcal{E}_h}. 
	\end{align*}
	Thus, with $\sigma_D^i>0$ (see \eqref{eq:diff_stab_func}) and by  Cauchy--Schwarz's and Young's inequalities, we have
        	\begin{align*}
 \tau^2 \norm[1]{\phi^{1/2}\delta_\tau c_h^i}_{L^2(\Omega)}^2 +\norm[0]{\phi^{1/2} c_h^{i}}_{L^2(\Omega)}^2 - \norm[0]{\phi^{1/2} c_h^{i-1}}_{L^2(\Omega)}^2 
		+ \tau \norm[1]{\bm{D}^{1/2}(\bm U_h^i)\bm \theta_h^i}_{L^2(\Omega)}^2 \\ + \tau \norm[1]{ (f_I^i)^{1/2} c_h^i}_{L^2(\Omega)}^2 
+ \tau \Vert (\sigma_D^i +     |\bm{U}_h^i \cdot \bm n |)^{1/2} (c_h^i - \widehat{c}_h^{\,i}) \Vert_{L^2(\partial\mathcal{E}_h)}^2  
  \le  \tau \norm[1]{ (f_I^i)^{1/2} \overline{c}^i}_{L^2(\Omega)}^2. 
	\end{align*}
Therefore, we can obtain \eqref{eq:boundconc} by summing from $i=1$ to $i=m$ and by using the stability of the $L^2$ projection. 

To prove \eqref{eq:norm1hconc}, it then suffices to bound the discrete gradient part. We have already proved that for all $\bm v_h \in \bm V_h$,
 \[
 		(\bm{\theta}_h^i, \bm{v}_h)_{\mathcal{E}_h} 
 =    - (\nabla c_h^i, \bm v_h)_{\mathcal{E}_h} + \langle c_h^i - \widehat c_h^{\,i}, \bm v_h \cdot \bm n \rangle_{\partial \mathcal{E}_h} 
 		= -(\bm G_h(c_h^i, \widehat c_h^{\,i}), \bm v_h)_{\mathcal{E}_h}.
 \]
This implies that $\bm\theta_h^i = - \bm G_h(c_h^i,\widehat c_h^{\,i})$.  With \eqref{eq:boundconc}  and the fact that
 \[
 \Vert  \bm\theta_h^i\Vert_{L^2(\Omega)}^2 \leq \frac{1}{d_0}
 \Vert \bm D^{1/2}(\bm U_h^i)\bm\theta_h^i\Vert_{L^2(\Omega)}^2,
 \]
we immediately obtain the bound
 \[
	\tau \sum_{i=1}^N \Vert \bm G_h(c_h^i, \widehat c_h^{\,i})\Vert_{L^2(\Omega)}^2 \le  \frac{\phi_1}{d_0}  \norm[1]{  c_0}_{L^2(\Omega)}^2 + \frac{\tau}{d_0} \sum_{i=1}^N \norm[1]{ (f_I^i)^{1/2} \overline{c}^i}_{L^2(\Omega)}^2.
 \]

\end{proof}

An immediate consequence of \Cref{lem:stab_transport} is the following:
\begin{corollary}
	For any  $1 \le i \le N$,
	there exists a unique solution $(\bm \theta_h^i, \bm q_h^i, c_h^i, \widehat c_h^{\,i})$ in $ \bm{V}_h \times \bm V_h \times W_h \times M_h$ to the discrete transport problem \crefrange{eq:discrete_transport_step_a}{eq:discrete_transport_step_d}.
\end{corollary}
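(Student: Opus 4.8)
The plan is to exploit the fact that, once the flow step has been solved, the transport system \crefrange{eq:discrete_transport_step_a}{eq:discrete_transport_step_d} is a \emph{linear, square} system. For fixed $i$, both the previous concentration $c_h^{i-1}$ and the post-processed velocity $\bm U_h^i$ are known data; in particular the coefficients $\bm D(\bm U_h^i)$, $\sigma_D^i = \bm n^T \bm D(\bm U_h^i)\bm n$ and $|\bm U_h^i \cdot \bm n|$ depend only on $\bm U_h^i$ and not on the transport unknowns. Thus \crefrange{eq:discrete_transport_step_a}{eq:discrete_transport_step_d} is a linear system for $(\bm \theta_h^i, \bm q_h^i, c_h^i, \widehat c_h^{\,i})$ posed on the finite-dimensional space $\bm V_h \times \bm V_h \times W_h \times M_h$, with trial and test spaces coinciding. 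Consequently existence and uniqueness are equivalent to injectivity, and it suffices to show that the homogeneous problem, obtained by setting $c_h^{i-1} = 0$ and $f_I^i \overline{c}^i = 0$, admits only the trivial solution.

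First I would invoke the single-step energy identity established in the course of the proof of \Cref{lem:stab_transport} (the line preceding the summation over $i$). Specializing it to the homogeneous data $c_h^{i-1} = 0$ and $\overline{c}^i = 0$, the right-hand side vanishes while every term on the left is nonnegative, by virtue of $\phi \ge \phi_0 > 0$ from \eqref{eq:porosity}, the ellipticity \eqref{eq:diffdisp} of $\bm D$, and $\sigma_D^i \ge d_0 > 0$ from \eqref{eq:diff_stab_func}. Hence each term must vanish individually.

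From $\Vert \phi^{1/2} c_h^i \Vert_{L^2(\Omega)} = 0$ and the lower bound on the porosity I deduce $c_h^i = 0$. The vanishing of $\Vert \bm D^{1/2}(\bm U_h^i)\bm\theta_h^i\Vert_{L^2(\Omega)}$, together with the uniform ellipticity $d_0(1+|\bm U_h^i|)|\bm\xi|^2 \le \bm\xi^T\bm D(\bm U_h^i)\bm\xi$, forces $\bm\theta_h^i = 0$. Equation \eqref{eq:discrete_transport_step_a} then yields $\bm q_h^i = \bm D(\bm U_h^i)\bm\theta_h^i = 0$. Finally, the vanishing of the face contribution $\Vert (\sigma_D^i + |\bm U_h^i\cdot \bm n|)^{1/2}(c_h^i - \widehat c_h^{\,i})\Vert_{L^2(\partial\mathcal{E}_h)}$, combined with $\sigma_D^i \ge d_0 > 0$, gives $c_h^i = \widehat c_h^{\,i}$ on every face, whence $\widehat c_h^{\,i} = 0$ since $c_h^i = 0$. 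This shows the homogeneous system has only the trivial solution and the corollary follows.

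Since the argument is a standard finite-dimensional injectivity-implies-bijectivity reduction, I expect no serious obstacle; the only points requiring care are verifying that the system is genuinely linear (i.e.\ that $\bm U_h^i$ enters as fixed data rather than as a coupled unknown) and recovering the two components $\bm q_h^i$ and $\widehat c_h^{\,i}$ that do not appear in the energy identity, which is handled by \eqref{eq:discrete_transport_step_a} and by the face stabilization term, respectively.
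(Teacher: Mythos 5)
Your proposal is correct and follows essentially the same route as the paper: both reduce to injectivity of the square linear system and kill the homogeneous solution via the energy identity underlying \Cref{lem:stab_transport}, then recover $\bm q_h^i$ from \eqref{eq:discrete_transport_step_a}. If anything, your version is slightly more careful than the paper's, since you zero out the per-step datum $c_h^{i-1}$ and use the single-step (pre-summation) identity, whereas the paper phrases the homogeneous data as $c_0 = 0$ and cites the summed stability bound.
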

\begin{proof}
	If $\overline c^i = 0$ and $c_0 = 0$, \Cref{lem:stab_transport} yields $\bm \theta_h^i = \bm 0$, $c_h^i = 0$, and $\widehat c_h^{\,i} = 0$. Then, it follows from \cref{eq:discrete_transport_step_a} that $\bm q_h^i = \bm 0$. The result now follows since \crefrange{eq:discrete_transport_step_a}{eq:discrete_transport_step_d} is a square linear system in the finite dimensional space $\bm V_h \times \bm V_h \times W_h \times M_h$.
\end{proof}

We conclude this section by deriving a bound on $\bm q_h^i$:
\begin{lemma} \label{lem:flux_Lp_bnd}
	There exists a constant $C>0$, independent of the mesh-size $h$ and time-step $\tau$, such that
	\[
	\tau \sum_{i=1}^{N} \Vert \bm q_h^i \Vert_{L^{2d/(2d-1)}(\Omega)}^2  \le C.
	\]
\end{lemma}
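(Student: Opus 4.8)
The plan is to exploit the constitutive relation $\bm q_h^i = \bm D(\bm U_h^i)\bm\theta_h^i$ from \eqref{eq:discrete_transport_step_a}, combining it with the two quantities already controlled by the stability analysis: the uniform velocity bound $\max_{1\le i \le N}\Vert \bm U_h^i\Vert_{L^2(\Omega)} \le C$ from \eqref{eq:flow_stab-LDG-H-star}, and the weighted bound $\tau\sum_{i=1}^N \Vert \bm D^{1/2}(\bm U_h^i)\bm\theta_h^i\Vert_{L^2(\Omega)}^2 \le C$ contained in \eqref{eq:boundconc}. The difficulty, and the reason the exponent must fall below $2$, is that $\bm D$ degenerates linearly in $|\bm U_h^i|$ by \eqref{eq:diffdisp}, so $\Vert\bm q_h^i\Vert_{L^2(\Omega)}$ cannot be controlled directly; the growth in $\bm D$ has to be paid for by spending integrability.

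First I would use both inequalities in \eqref{eq:diffdisp}. The upper bound gives the pointwise estimate $|\bm q_h^i| = |\bm D(\bm U_h^i)\bm\theta_h^i| \le d_1(1+|\bm U_h^i|)|\bm\theta_h^i|$, while the lower bound yields $(1+|\bm U_h^i|)|\bm\theta_h^i|^2 \le d_0^{-1}|\bm D^{1/2}(\bm U_h^i)\bm\theta_h^i|^2$. Factoring a half-power of $(1+|\bm U_h^i|)$ out of the first estimate and substituting the second produces the pointwise bound $|\bm q_h^i| \lesssim (1+|\bm U_h^i|)^{1/2}\,|\bm D^{1/2}(\bm U_h^i)\bm\theta_h^i|$, which cleanly separates a velocity factor from an $L^2$-controlled factor.

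The key step is a Hölder inequality tuned so that the $\bm D^{1/2}(\bm U_h^i)\bm\theta_h^i$ factor ends up squared. Writing $r = 2d/(2d-1)$, I would apply Hölder to $\int_\Omega |\bm q_h^i|^r \dif x$ with the conjugate exponents $2/(2-r)$ and $2/r$, which isolates $\big(\int_\Omega |\bm D^{1/2}(\bm U_h^i)\bm\theta_h^i|^2\dif x\big)^{r/2}$ against $\big(\int_\Omega (1+|\bm U_h^i|)^{d/(d-1)}\dif x\big)^{(d-1)/(2d-1)}$; the choice $r=2d/(2d-1)$ is precisely the one for which the velocity power collapses to $d/(d-1)$. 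Since $d/(d-1)\le 2$ for $d\in\{2,3\}$ and $\Omega$ is bounded, the embedding $L^2(\Omega)\hookrightarrow L^{d/(d-1)}(\Omega)$ together with \eqref{eq:flow_stab-LDG-H-star} bounds the velocity factor uniformly in $i$. This leaves $\Vert\bm q_h^i\Vert_{L^{2d/(2d-1)}(\Omega)}^2 \lesssim \Vert\bm D^{1/2}(\bm U_h^i)\bm\theta_h^i\Vert_{L^2(\Omega)}^2$ after raising to the power $2/r$; multiplying by $\tau$, summing over $i$, and invoking the stability bound \eqref{eq:boundconc} closes the argument. The main obstacle is entirely the bookkeeping of exponents: one must check that the single value $r = 2d/(2d-1)$ simultaneously yields a square on the gradient factor and a velocity power equal to $d/(d-1)$ that is controlled by the available $L^2$ velocity bound, whereas any larger exponent would demand velocity integrability that the flow stability estimate does not furnish.
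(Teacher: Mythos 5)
Your argument contains one genuine gap, and it sits at the very first step: equation \eqref{eq:discrete_transport_step_a} does \emph{not} say that $\bm q_h^i = \bm D(\bm U_h^i)\bm\theta_h^i$ pointwise. It says
\[
(\bm D(\bm U_h^i)\bm\theta_h^i, \bm z_h)_{\mathcal{E}_h} = (\bm q_h^i, \bm z_h)_{\mathcal{E}_h} \quad \text{for all } \bm z_h \in \bm V_h,
\]
i.e. $\bm q_h^i$ is the element-wise $L^2$ projection of $\bm D(\bm U_h^i)\bm\theta_h^i$ onto $\bm V_h$. Since $\bm D(\bm U_h^i)$ involves $|\bm U_h^i|$ and $\bm U_h^i(\bm U_h^i)^T/|\bm U_h^i|^2$, the product $\bm D(\bm U_h^i)\bm\theta_h^i$ is not a piecewise polynomial, so the identity you assume is false in general, and with it the pointwise bound $|\bm q_h^i|\lesssim (1+|\bm U_h^i|)^{1/2}|\bm D^{1/2}(\bm U_h^i)\bm\theta_h^i|$ on which your entire H\"older computation rests.

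The gap is fixable, and the missing ingredient is exactly what the paper supplies. Writing $\bm q_h^i = \bm\pi_k\bigl(\bm D(\bm U_h^i)\bm\theta_h^i\bigr)$, you need the stability of the element-wise $L^2$ projection in $L^{2d/(2d-1)}(\Omega)$ (valid on shape-regular meshes), after which your pointwise estimate and exponent bookkeeping apply verbatim to $\bm D(\bm U_h^i)\bm\theta_h^i$ itself. The paper packages the same content in dual form: it computes $\Vert\bm q_h^i\Vert_{L^{2d/(2d-1)}(\Omega)}$ by testing against $\bm z \in L^{2d}(\Omega)^d$, replaces $\bm z$ by $\bm\pi_k\bm z$ (legitimate precisely because $\bm q_h^i \in \bm V_h$), invokes the discrete equation, and then uses weighted Cauchy--Schwarz, H\"older with the velocity measured in $L^{d/(d-1)}(\Omega)$, and the $L^{2d}$-stability of $\bm\pi_k$, arriving at the intermediate bound \eqref{eq:qh_bnd_inter}. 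Your choice of exponents --- the $r = 2d/(2d-1)$ H\"older split, the collapse of the velocity power to $d/(d-1)\le 2$, and the appeal to \eqref{eq:flow_stab-LDG-H-star} and \eqref{eq:boundconc} --- is correct and matches the paper's; only the projection step separating $\bm q_h^i$ from $\bm D(\bm U_h^i)\bm\theta_h^i$ is missing, and without it the proof as written does not stand.
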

\begin{proof}
	With the Riesz Representation Theorem (for Lebesgue spaces), \cref{eq:discrete_transport_step_a} yields for any $1 \le i \le N$,
	\begin{align*}
		\norm[1]{ \bm q_h^i}_{L^{2d/(2d-1)}(\Omega)} 
			& = \sup_{\bm 0 \ne \bm z \in (L^{2d}(\Omega))^d} \frac{ (\bm D(\bm U_h^i) \bm \theta_h^i, \bm \pi_k\bm z)_{\mathcal{E}_h}}{\norm{\bm z}_{L^{2d}(\Omega)}}.
	\end{align*}
By Cauchy-Schwarz's inequality, H\"older's inequality and \eqref{eq:diffdisp}, we have 
\begin{align*}
(\bm D(\bm U_h^i) \bm \theta_h^i, & \bm \pi_k\bm z)_{\mathcal{E}_h}
\leq \Vert \bm D^{1/2}(\bm U_h^i) \bm \theta_h^i\Vert_{L^2(\Omega)}
\Vert \bm D^{1/2}(\bm U_h^i) \bm \pi_k \bm z \Vert_{L^2(\Omega)}
\\
&\lesssim \Vert \bm D^{1/2}(\bm U_h^i) \bm \theta_h^i\Vert_{L^2(\Omega)}
(\Vert \bm \pi_k \bm z \Vert_{L^2(\Omega)}^2
+ \Vert \bm U_h^i \Vert_{L^{d/(d-1)}(\Omega)}
\Vert \bm \pi_k \bm z\Vert_{L^{2d}(\Omega)}^2)^{1/2}.
\end{align*}
Since $d/(d-1)\leq 2$ and the velocity $\bm U_h^i$ is uniformly bounded in $L^2(\Omega)$ by Theorem~\ref{lem:stab_flow}, 
\[
(\bm D(\bm U_h^i) \bm \theta_h^i, \bm \pi_k\bm z)_{\mathcal{E}_h}
\lesssim \Vert \bm D^{1/2}(\bm U_h^i) \bm \theta_h^i\Vert_{L^2(\Omega)}
\, \Vert \bm \pi_k \bm z\Vert_{L^{2d}(\Omega)}.
\]
Owing to the stability of the $L^2$ projection in $L^{2d}(\Omega)$, we conclude:

	\begin{equation} \label{eq:qh_bnd_inter}
	\norm[1]{ \bm q_h^i}_{L^{2d/(2d-1)}(\Omega)} \lesssim \norm[1]{\bm D^{1/2}(\bm U_h^i) \bm \theta_h^i}_{L^2(\Omega)},
	\end{equation}
 which, with Theorem~\ref{lem:stab_transport}, gives us the result. 
\end{proof}

\section{Compactness}
\label{sec:compact} 

We require a number of technical results concerning the compactness properties of HDG approximations which we summarize in  \Cref{ss:compact_prelim} and apply it for the flow problem in \Cref{ss:compact_flow}.  
The time derivative of the discrete concentration is bounded in \Cref{sss:compact_time_deriv}, which is then utilized in \Cref{ss:compact_transport} to prove compactness of the concentration.

Our analysis will utilize projections valid for $k \ge 1$, which we now assume for the remainder of the paper.

\subsection{Preliminaries}
\label{ss:compact_prelim}

We recall the BDM projection \cite{Boffi:book,Sayas:book}, defined locally, for a vector function $\bm{q}$ 
 	sufficiently smooth, as  $\bm{\Pi}^{\text{BDM}} \bm{q} \in \bm{V}_h$; for any $E\in\mathcal{E}_h$:
 	\begin{subequations}\label{eq:BDM_proj}
 		\begin{align}		
 	 		\forall \bm{v}_h \in \mathcal{N}_{k-2}(E), \quad(\bm{\Pi}^{\text{BDM}} \bm{q}, \bm{v}_h)_E &= 
 			(\bm{q},\bm{v}_h)_E,  \label{eq:BDM_proj_1} \\
 	 		\forall \widehat{w}_h \in \mathcal{R}_k(\partial E),\quad	
 	 		\langle \bm{\Pi}^{\text{BDM}} \bm{q} \cdot \bm{n}, \widehat{w}_h \rangle_{\partial E} &= \langle 
 			\bm{q} 
 	 		\cdot \bm{n}, \widehat{w}_h \rangle_{\partial E},  \label{eq:BDM_proj_2}
 	 	\end{align}
 	 \end{subequations}
 	 where $\mathcal{N}_{k-2}(E)$ is the local N\'{e}d\'{e}lec space of the first kind 
 	 \cite{Nedelec:1980,Boffi:book}
 	 and $\mathcal{R}_k(\partial E) = \cbr{\mu \in L^2(\partial E) \, : \, \mu|_{e} \in \mathbb{P}_k(e), \, \forall e \subset \partial K }$. 
 	The following commutativity property holds, for any $E\in\mathcal{E}_h$: 
 	\begin{equation} \label{eq:commutativity}
 	 	\nabla \cdot \bm{\Pi}^{\text{BDM}} \bm{q} =\pi_{k-1} \nabla \cdot \bm{q}, \quad \text{ on } E.
 	 \end{equation}
 	In addition, there exists a constant $C>0$ such that, for $1 \le m \le k+1$, for $E\in\mathcal{E}_h$ 
 	 and any $\bm q \in  (H^m(E))^d$, we have
 	 \begin{equation}\label{eq:BDMapprox}
 		\norm[0]{\bm{q} - \bm{\Pi}^{\text{BDM}} \bm{q}}_{L^2(E)}  \le Ch^{m} 
 	 	|\bm{q}|_{H^{m}(E)}.
 	 \end{equation}

In what follows, it will be convenient to introduce the following space-time discrete spaces: 
\begin{align*}
		\bm{\mathcal{V}}_h &= \cbr{ \bm v \in L^2(0,T;(L^2(\Omega))^d) \, : \, \bm v|_{[t^{i-1},t^i)} \in \mathbb{P}_0(t^{i-1},t^i;\bm V_h), \, \forall \,1 \le i \le N}, \\
	\mathcal{W}_h &= \cbr{ w \in L^2(0,T;L^2(\Omega)) \, : \, w|_{[t^{i-1},t^i)} \in \mathbb{P}_0(t^{i-1},t^i;W_h), \, \forall \,1 \le i \le N},\\
 	\mathcal{Q}_h &= \cbr{ q \in L^2(0,T;L_0^2(\Omega)) \, : \, q|_{[t^{i-1},t^i)} \in \mathbb{P}_0(t^{i-1},t^i;Q_h), \, \forall \,1 \le i \le N},\\
	\mathcal{M}_h &= \cbr{ \widehat w \in L^2(0,T;L^2(\partial \mathcal{E}_h)) \, : \, \widehat w|_{[t^{i-1},t^i)} \in \mathbb{P}_0(t^{i-1},t^i; M_h), \, \forall \,1 \le i \le N}.
\end{align*}

\begin{lemma} \label{cor:spacetime_grad_conv}
	Let $(w_h, \widehat w_h) \in \mathcal{W}_h \times \mathcal{M}_h$ and suppose that for some constant $C>0$, independent of the mesh-size $h$ and time-step $\tau$,
	\[
	\int_0^T \norm{(w_h,\widehat w_h)}_{1,h}^2 \dif t \le C.
	\]
	There exists $w \in L^2(0,T;H^1(\Omega))$ such that, up to a subsequence, as $h\to 0$,
	\[
	w_h \rightharpoonup w, \quad \text{ in } L^2(0,T;L^2(\Omega)), \quad \text{and}\quad \bm G_h(w_h,\widehat w_h) \rightharpoonup \nabla w, \quad \text{ in } L^2(0,T;(L^2(\Omega))^d).
	\]
\end{lemma}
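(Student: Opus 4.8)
The plan is to extract weakly convergent subsequences from the two bounded families and then identify the weak limit of the discrete gradients with the distributional gradient of the limit of $w_h$. First I would note that the hypothesis together with the definition \eqref{eq:disc_h1_norm} of the norm gives the uniform bounds
\[
\int_0^T \norm{w_h}_{L^2(\Omega)}^2 \dif t \le C, \qquad \int_0^T \norm{\bm G_h(w_h,\widehat w_h)}_{L^2(\Omega)}^2 \dif t \le C.
\]
Since $L^2(0,T;L^2(\Omega))$ and $L^2(0,T;(L^2(\Omega))^d)$ are Hilbert spaces, reflexivity yields a (not relabeled) subsequence and limits $w \in L^2(0,T;L^2(\Omega))$ and $\bm G \in L^2(0,T;(L^2(\Omega))^d)$ with $w_h \rightharpoonup w$ and $\bm G_h(w_h,\widehat w_h) \rightharpoonup \bm G$. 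It then remains to prove that $\bm G = \nabla w$ in the sense of distributions, which simultaneously shows $w \in L^2(0,T;H^1(\Omega))$ and identifies the limit of the gradients. Note that for this weak statement only reflexivity is needed; the Rellich--Kondrachov type result of \Cref{lem:HDG_RK} is not required.

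The key step is a discrete integration-by-parts identity tested against a smooth, $H(\text{div})$-conforming field. I would fix $\phi \in C_c^\infty(0,T)$ and $\bm\psi \in (C_c^\infty(\Omega))^d$, and take the BDM projection $\bm v_h = \bm\Pi^{\text{BDM}}\bm\psi \in \bm V_h$ as a test function in \eqref{eq:disc_grad}. Integrating by parts element by element converts $(\nabla w_h, \bm v_h)_{\mathcal E_h}$ into $-(w_h, \nabla\cdot\bm v_h)_{\mathcal E_h}$ plus facet terms, and after combining the $\widehat w_h$ contribution one is left with
\[
(\bm G_h(w_h,\widehat w_h), \bm\Pi^{\text{BDM}}\bm\psi)_{\mathcal E_h} = -(w_h, \nabla\cdot\bm\Pi^{\text{BDM}}\bm\psi)_{\mathcal E_h} + \langle \widehat w_h, \bm\Pi^{\text{BDM}}\bm\psi\cdot\bm n\rangle_{\partial\mathcal E_h}.
\]
Because $\bm\Pi^{\text{BDM}}\bm\psi \in H(\text{div};\Omega)$ its normal component is single-valued across interior faces, while $\widehat w_h$ is single-valued by construction; hence the interior-face contributions cancel in pairs. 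On boundary faces the normal trace of $\bm\Pi^{\text{BDM}}\bm\psi$ vanishes since $\bm\psi\cdot\bm n = 0$ on $\partial\Omega$ and the BDM projection preserves normal traces \eqref{eq:BDM_proj_2}. The facet term therefore drops out, and the commutativity property \eqref{eq:commutativity} gives $\nabla\cdot\bm\Pi^{\text{BDM}}\bm\psi = \pi_{k-1}\nabla\cdot\bm\psi$, so that
\[
(\bm G_h(w_h,\widehat w_h), \bm\Pi^{\text{BDM}}\bm\psi)_{\mathcal E_h} = -(w_h, \pi_{k-1}\nabla\cdot\bm\psi)_\Omega.
\]

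Finally I would multiply by $\phi(t)$, integrate over $(0,T)$, and pass to the limit $h\to0$. On the left, $\bm\Pi^{\text{BDM}}\bm\psi \to \bm\psi$ strongly in $(L^2(\Omega))^d$ by \eqref{eq:BDMapprox}, so pairing this strong convergence with $\bm G_h \rightharpoonup \bm G$ yields $\int_0^T \phi\,(\bm G, \bm\psi)_\Omega \dif t$; on the right, $\pi_{k-1}\nabla\cdot\bm\psi \to \nabla\cdot\bm\psi$ strongly in $L^2(\Omega)$, which paired with $w_h \rightharpoonup w$ gives $\int_0^T\phi\,(w,\nabla\cdot\bm\psi)_\Omega\dif t$. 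Hence
\[
\int_0^T\phi(t)\,(\bm G,\bm\psi)_\Omega\dif t = -\int_0^T\phi(t)\,(w,\nabla\cdot\bm\psi)_\Omega\dif t
\]
for all such $\phi,\bm\psi$; by density of the span of these tensor products this means $\bm G = \nabla w$ distributionally on $\Omega\times(0,T)$. Since $\bm G\in L^2$, this proves $w\in L^2(0,T;H^1(\Omega))$ with $\nabla w = \bm G$, completing the identification.

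I expect the only delicate point to be the vanishing of the facet and boundary terms after the discrete integration by parts: this is precisely where the $H(\text{div})$-conformity of the BDM projection, the single-valuedness of $\widehat w_h$, and the commutativity relation \eqref{eq:commutativity} are used, and it is the reason the BDM projection is preferred over the plain $L^2$ projection into $\bm V_h$ (for which the mesh-dependent facet terms would not cancel). Once those terms are removed, the remainder is a routine weak--strong pairing.
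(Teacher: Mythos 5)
Your proof is correct and follows essentially the same route as the paper: weak compactness from the uniform bound, then identification of the limit gradient by testing \eqref{eq:disc_grad} with $\bm\Pi^{\rm BDM}$ of a smooth compactly supported field, using the commutativity property \eqref{eq:commutativity} and weak--strong convergence. The only difference is presentational — you spell out the element-wise integration by parts and the cancellation of facet terms (which the paper compresses into a single equality) and use tensor-product test functions $\phi(t)\bm\psi(x)$ with a density argument instead of the paper's time-dependent smooth fields; both are fine.
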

\begin{proof}
	Observe that the stated bound implies the existence of $w \in L^2(0,T;L^2(\Omega))$ and $\bm g \in L^2(0,T;(L^2(\Omega))^d)$ such that, up to a subsequence, as $h\to 0$,
	\[
	w_h \rightharpoonup w, \quad \text{ in } L^2(0,T;L^2(\Omega)), \quad \bm G_h(w_h,\widehat w_h) \rightharpoonup \bm g, \quad \text{ in } L^2(0,T;(L^2(\Omega))^d).
	\]
	To see that $\bm g = \nabla w$, consider an arbitrary $\bm \varphi \in C^\infty(0,T; C_c^\infty(\Omega))^d$, select $\bm \varphi_h(\cdot, t) = \bm \Pi^{\rm BDM} \bm \varphi(\cdot,t)$ in the \cref{eq:disc_grad}, integrate over $(0,T)$, and use weak--strong convergence:
	\begin{align*}
		\int_0^T(\bm g, \bm \varphi)_{\Omega} \dif t&= \lim_{h \to 0}  \int_0^T (\bm G_h(w_h,\widehat{w}_h),  \bm \Pi^{\rm BDM} \bm \varphi)_{\mathcal{E}_h} \dif t \\
		&=   -\lim_{h \to 0} \int_0^T (w_h,  \pi_{k-1}\nabla \cdot \bm \varphi)_{\mathcal{E}_h} \dif t
		\\ &= - \int_0^T (w,  \nabla \cdot \bm \varphi)_{\Omega}  \dif t.
	\end{align*}
\end{proof}

We define $\tilde{c}_h \in C(0,T; W_h) $ to be the piecewise affine interpolant in time of $c_h^i$:
\begin{equation} \label{eq:c_interp_time}
\tilde{c}_h = \frac{\del{t - t_{i-1}}}{\tau} c_h^i + \frac{\del{ t_i - t}}{\tau} c_h^{i-1}, \quad t_{i-1} < t \le t_i.
\end{equation}
We denote by $(c_h,\widehat c_h) \in \mathcal{W}_h \times\mathcal{M}_h$ and $(c_h^-,\widehat c_h^{\, - }) \in \mathcal{W}_h \times\mathcal{M}_h$ the following piecewise constant interpolants in time of $c_h^i$ and $\widehat c_h^i$:
\begin{align*}
(c_h(t), \widehat c_h(t)) &= (c_h^i,\widehat c_h^{\,i}), \quad t_{i-1} < t \leq t_i, \quad 1\leq i\leq N \\
(c_h^-(t), \widehat c_h^{\, - }(t)) &= (c_h^{i-1}, \widehat c_h^{\,i-1}),  \quad t_{i-1} < t \leq t_i, \quad 1\leq i\leq N
\end{align*}
Similarly, we will denote by $\bm q_h, \bm \theta_h,   \bm u_h, \bm U_h  \in \bm{\mathcal{V}}_h$  and $p_h \in \mathcal{Q}_h$ the piecewise constant interpolants in time of $\bm q_h^i$, $\bm \theta_h^i$, $\bm u_h^i$, $\bm U_h^i$  and $p_h^i$, respectively.  

\subsection{Compactness results for the flow problem}
\label{ss:compact_flow}

 In this subsection we discuss the compactness properties of the HDG approximation of the flow problem \crefrange{eq:HMHDG_darcy_step_a}{eq:HMHDG_darcy_step_c}. 
From \Cref{lem:stab_flow} there exists a constant $C>0$, independent of $h$ and $\tau$, such that
	\begin{align} \label{eq:flow_linfl2_bounds} 
	 \norm[0]{\bm u_h}_{L^\infty(0,T;L^2(\Omega))} &+ \norm[0]{\bm G_h(p_h,\widehat p_h)}_{L^\infty(0,T;L^2(\Omega))} \\ &+ \norm[0]{ \sigma_u^{1/2}(p_h - \widehat p_h)}_{L^\infty(0,T;L^2(\partial \mathcal{E}_h))} + \norm{\bm U_h}_{L^\infty(0,T;L^2(\Omega))} \le C. \notag
	\end{align}

\begin{theorem}\label{thm:43}
There exists a pair $(\check{\bm u},\check p) \in L^\infty(0,T;L^2(\Omega))^d\times L^\infty(0,T;H^1(\Omega) \cap L_0^2(\Omega))$ such that, up to a (not relabeled) subsequence,
\begin{align*}
\bm u_h &\rightharpoonup^\star \check{\bm u}, \quad \text{ in } L^\infty(0,T;L^2(\Omega)^d), \\
p_h &\rightharpoonup^\star \check p, \quad \text{ in } L^\infty(0,T;L_0^2(\Omega)), \\ p_h & \rightharpoonup \check p, \quad \text{ in } L^2(0,T;L_0^2(\Omega)),
\\ \bm G_h(p_h,\widehat p_h) &\rightharpoonup \nabla \check p, \quad \text{ in } L^2(0,T;L^2(\Omega)^d).
\end{align*}
In addition, there exists a function $\check{\bm U} \in L^\infty(0,T;L^2(\Omega)^d)$ such that
\[
\bm U_h \rightharpoonup^\star \check{\bm U}, \quad \text{ in } L^\infty(0,T;L^2(\Omega)^d).
\]
\end{theorem}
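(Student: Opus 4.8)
The plan is to combine the uniform bound \eqref{eq:flow_linfl2_bounds} with two ingredients already established: sequential weak-$\star$ compactness in $L^\infty(0,T;L^2(\Omega)) = \del{L^1(0,T;L^2(\Omega))}^\star$ (available because $L^2(\Omega)$ is separable and reflexive), and the space--time gradient identification of \Cref{cor:spacetime_grad_conv}. First I would record from \eqref{eq:flow_linfl2_bounds} that $\bm u_h$, $\bm U_h$, $\bm G_h(p_h,\widehat p_h)$ and $\sigma_u^{1/2}(p_h-\widehat p_h)$ are bounded uniformly in the relevant $L^\infty(0,T;L^2)$ spaces. Since $p_h$ itself is not controlled by \eqref{eq:flow_linfl2_bounds}, I would apply the sharpened HDG Poincar\'e inequality \eqref{eq:poinc_on_Qh}, valid because each $p_h^i \in Q_h \subset L_0^2(\Omega)$, to obtain $\norm{p_h}_{L^2(\Omega)} \lesssim \norm{(p_h,\widehat p_h)}_{1,h,0}$ at a.e. time and therefore $\norm{p_h}_{L^\infty(0,T;L_0^2(\Omega))} \le C$.

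With these four $L^\infty(0,T;L^2)$ bounds, a single successive extraction of a common subsequence produces weak-$\star$ limits $\check{\bm u}$, $\check{\bm U}$, $\check p$ and a weak-$\star$ limit $\bm g$ of $\bm G_h(p_h,\widehat p_h)$. The zero-mean constraint passes to the limit (test against functions constant in space), so $\check p(t) \in L_0^2(\Omega)$ for a.e. $t$. Because $(0,T)$ is bounded, $L^2(0,T;L^2(\Omega)) \subset L^1(0,T;L^2(\Omega))$, so testing the weak-$\star$ statements against this smaller, still dense, class upgrades them to weak convergence in $L^2(0,T;L^2)$ with the same limits; in particular $p_h \rightharpoonup \check p$ in $L^2(0,T;L_0^2(\Omega))$.

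To identify the gradient, I would integrate the $L^\infty$ bounds over $(0,T)$ to get $\int_0^T \norm{(p_h,\widehat p_h)}_{1,h}^2 \dif t \le C$, which is exactly the hypothesis of \Cref{cor:spacetime_grad_conv}. That lemma then gives $\check p \in L^2(0,T;H^1(\Omega))$ and $\bm G_h(p_h,\widehat p_h) \rightharpoonup \nabla \check p$ in $L^2(0,T;(L^2(\Omega))^d)$. By uniqueness of weak limits, $\bm g = \nabla \check p$; since $\bm g \in L^\infty(0,T;L^2)$ we deduce $\nabla \check p \in L^\infty(0,T;L^2)$, and together with $\check p \in L^\infty(0,T;L_0^2)$ this yields $\check p \in L^\infty(0,T; H^1(\Omega)\cap L_0^2(\Omega))$.

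I expect the last step to be the main obstacle. \Cref{cor:spacetime_grad_conv} identifies the discrete gradient only in the $L^2$-in-time topology, so the $L^\infty$-in-time regularity of $\check p$ stated in the theorem does not follow from it directly; it is recovered by separately extracting the weak-$\star$ limit $\bm g$ of the $L^\infty(0,T;L^2)$-bounded sequence $\bm G_h(p_h,\widehat p_h)$ and matching $\bm g$ with $\nabla \check p$ through uniqueness of limits across the two topologies. One must also ensure all convergences refer to a single subsequence, which is guaranteed by performing the finitely many selections one after another.
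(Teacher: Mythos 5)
Your proposal is correct and follows essentially the same route as the paper's proof: the uniform bound \eqref{eq:flow_linfl2_bounds}, the sharpened HDG Poincar\'e inequality \eqref{eq:poinc_on_Qh} on $Q_h$, weak-$\star$ compactness, and \Cref{cor:spacetime_grad_conv} to identify the limit of the discrete gradient. If anything, your final step---extracting a separate weak-$\star$ limit $\bm g$ of $\bm G_h(p_h,\widehat p_h)$ in $L^\infty(0,T;L^2(\Omega)^d)$ and matching it with $\nabla \check p$ by uniqueness of limits to justify the claimed $L^\infty$-in-time $H^1$ regularity of $\check p$---is spelled out more carefully than in the paper, whose terse proof leaves that point implicit.
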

\begin{proof}
The bound \cref{eq:flow_linfl2_bounds} 
implies the weak-star convergences.
Since $p_h^i \in Q_h$, \eqref{eq:poinc_on_Qh}, \eqref{eq:flow_linfl2_bounds}, and Lemma~\ref{cor:spacetime_grad_conv} yield a function $p\in L^2(0,T;H^1(\Omega))$
such that $p_h \rightharpoonup p$
in $L^2(0,T;L^2(\Omega))$ and 
$\bm G_h(p_h,\widehat p_h) \rightharpoonup \nabla p$
in $L^2(0,T;L^2(\Omega)^d)$. Since $p_h \in L_0^2(\Omega)$, weak convergence implies $p \in L_0^2(\Omega)$.
\end{proof}

\subsection{Bounding the discrete time derivative}
\label{sss:compact_time_deriv}
First, we require an approximation result taken from \cite{Bartels:2009} for a weighted $L^2-$projection in $W^{1,p}( \mathcal{E}_h)$ to establish the compactness of the concentration approximations in $L^2(0,T;L^2(\Omega))$. Define $\pi_{k,\phi} : L^2(\Omega) \to W_h$  the weighted $L^2$ projection satisfying 
\[
\forall w_h \in W_h, \quad \forall E\in\mathcal{E}_h, \quad (\phi \pi_{k,\phi} w, w_h)_{E} 
= (\phi w, w_h)_{E}.
\]
Let $2 \le p \le 2d$; fix $E\in\mathcal{E}_h$ and fix $w \in W^{1,p}(E)$; it holds that
\begin{equation} \label{eq:approx_prop_weighted_L2}
\norm{ w - \pi_{k,\phi} w}_{L^p(E)} + h_E \Vert \nabla( w - \pi_{k,\phi} w)\Vert_{L^p(E)} \lesssim h_E |w|_{W^{1,p}(E)}.
\end{equation}
Recall also the approximation properties of the Scott-Zhang quasi-interpolant of a function $w$, denoted by $\Pi^{\rm SZ} w \in C^0(\overline\Omega) \cap W_h$: for integers $\ell$ and $m$ satisfying $1 \le m \le k+1$, $0 \le \ell \le m$, and $\ell - d/p \le m - d/q$, we have
\begin{equation} \label{eq:SZ_approx}
|w -  \Pi^{\rm SZ} w|_{W^{\ell,p}(E)} \lesssim h_E^{m-\ell+d/p -d/q} |w|_{W^{m,q}(\Delta_E)},
\end{equation}
where $\Delta_E$ is a macro-element containing $E$.
We prove a stability bound in 
the following norm on the broken Sobolev space $W^{1,p}( \mathcal{E}_h) \times L^p(\partial \mathcal{E}_h)$:
\[
\norm{(w,\widehat{w})}_{W^{1,p}(\mathcal{E}_h)} = \del[2]{\norm{w}_{L^p(\Omega)}^p + \norm{\nabla_h w}_{L^p(\Omega)}^p +  \sum_{E \in \mathcal{E}_h} h_E^{1-p} \norm{w - \widehat w}_{L^p(\partial E)}^p}^{1/p}.
\]
\begin{proposition} \label{prop:approx_prop_weighted_L2_SZ}
	Let $2  \le p \le \infty$.
	There exists a constant $C>0$, independent of $h$, such that for all $w \in W^{1,p}(\Omega)$,
	\begin{equation} \label{eq:W1p_H2_bnd}
	\norm[1]{ (w- \pi_{k,\phi} w, w - \Pi^{\rm SZ} w)}_{W^{1,p}(\mathcal{E}_h)}  \le C \norm{w}_{W^{1,p}(\Omega)}. 
	\end{equation}
\end{proposition}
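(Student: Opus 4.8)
The plan is to split the broken norm in \eqref{eq:W1p_H2_bnd} into its three constituent pieces and bound each by $\norm{w}_{W^{1,p}(\Omega)}$. Writing $v = w - \pi_{k,\phi} w$ and $\widehat v = w - \Pi^{\rm SZ} w$, the norm consists of (i) the volumetric term $\norm{w - \pi_{k,\phi} w}_{L^p(\Omega)}^p$, (ii) the broken-gradient term $\norm{\nabla_h(w - \pi_{k,\phi}w)}_{L^p(\Omega)}^p$, and (iii) the jump term $\sum_{E \in \mathcal{E}_h} h_E^{1-p}\norm{v - \widehat v}_{L^p(\partial E)}^p$, in which $v - \widehat v = \Pi^{\rm SZ} w - \pi_{k,\phi} w$.

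I would handle the two bulk pieces first. For (i), the weighted-projection estimate \eqref{eq:approx_prop_weighted_L2} gives $\norm{w - \pi_{k,\phi}w}_{L^p(E)} \lesssim h_E |w|_{W^{1,p}(E)}$; since the mesh is shape-regular with $h_E \le h \lesssim 1$, raising to the power $p$ and summing over $E \in \mathcal{E}_h$ yields $\norm{w - \pi_{k,\phi} w}_{L^p(\Omega)} \lesssim |w|_{W^{1,p}(\Omega)}$. For (ii), dividing \eqref{eq:approx_prop_weighted_L2} by $h_E$ gives $\norm{\nabla(w - \pi_{k,\phi}w)}_{L^p(E)} \lesssim |w|_{W^{1,p}(E)}$, and summing again produces a bound by $|w|_{W^{1,p}(\Omega)}$. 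Both bulk pieces are thus controlled by $\norm{w}_{W^{1,p}(\Omega)}$.

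The jump term (iii) is where the negative power $h_E^{1-p}$ makes the estimate delicate, and it is the main obstacle. First I would split the jump by the triangle inequality, $\Pi^{\rm SZ} w - \pi_{k,\phi}w = (\Pi^{\rm SZ} w - w) + (w - \pi_{k,\phi} w)$. Next, applying the scaled trace inequality $\norm{\psi}_{L^p(\partial E)}^p \lesssim h_E^{-1}\norm{\psi}_{L^p(E)}^p + h_E^{p-1}\norm{\nabla \psi}_{L^p(E)}^p$, valid on a shape-regular element for $\psi \in W^{1,p}(E)$, and multiplying by $h_E^{1-p}$ converts the boundary norm into $h_E^{-p}\norm{\psi}_{L^p(E)}^p + \norm{\nabla \psi}_{L^p(E)}^p$. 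The factor $h_E^{-p}$ is matched exactly by the $O(h_E)$ approximation orders: from \eqref{eq:approx_prop_weighted_L2} together with \eqref{eq:SZ_approx} taken with $(\ell,m,q) = (0,1,p)$, one has $\norm{\Pi^{\rm SZ} w - w}_{L^p(E)} + \norm{w - \pi_{k,\phi}w}_{L^p(E)} \lesssim h_E |w|_{W^{1,p}(\Delta_E)}$, so the $h_E^{-p}$ cancels, while the gradient contribution is bounded by $|w|_{W^{1,p}(\Delta_E)}^p$ using \eqref{eq:SZ_approx} with $(\ell,m,q)=(1,1,p)$ and the $h_E$-divided \eqref{eq:approx_prop_weighted_L2}. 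Summing over $E$ and invoking the finite-overlap property of the Scott--Zhang macro-elements $\Delta_E$ guaranteed by shape regularity then gives $\sum_E h_E^{1-p}\norm{\Pi^{\rm SZ} w - \pi_{k,\phi} w}_{L^p(\partial E)}^p \lesssim |w|_{W^{1,p}(\Omega)}^p$. Collecting the three bounds proves \eqref{eq:W1p_H2_bnd}.

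A technical point deserving care is the range of $p$: the estimate \eqref{eq:approx_prop_weighted_L2} is stated for $2 \le p \le 2d$, whereas the proposition asserts the bound up to $p = \infty$. For $p > 2d$ I would either invoke the standard extension of the weighted $L^2$-projection approximation bound to all $1 \le p \le \infty$ under shape regularity (the argument is identical, relying only on norm equivalence on the reference element and the uniform bounds $\phi_0 \le \phi \le \phi_1$), or, where only stability is needed, use the $L^p$- and $W^{1,p}$-stability of $\pi_{k,\phi}$ directly; in either case the structure of the argument above is unchanged, since \eqref{eq:SZ_approx} already covers the full range.
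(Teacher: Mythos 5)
Your proof follows essentially the same route as the paper's: bound the two bulk terms directly with \eqref{eq:approx_prop_weighted_L2}, split the facet term via the triangle inequality into the $\pi_{k,\phi}$ and Scott--Zhang contributions, and control each with a trace inequality scaled by $h_E^{1-p}$, the respective $O(h_E)$ approximation bounds \eqref{eq:approx_prop_weighted_L2} and \eqref{eq:SZ_approx}, and the finite overlap of the macro-elements $\Delta_E$. The only substantive difference is your closing remark on the range $p>2d$: the paper's proof silently applies \eqref{eq:approx_prop_weighted_L2} beyond its stated range $2\le p\le 2d$ (only $p=2d$ is ever used downstream, in \Cref{lem:time_der_bnd}), so your extra care there is a refinement rather than a divergence.
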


\begin{proof}
In light of 
\cref{eq:approx_prop_weighted_L2}, it suffices to bound the facet terms. 
From the multiplicative trace inequality (see e.g. \cite[Lemma 12.15]{Ern:booki}), that $(a+b)^p \le 2^{p-1} (a^p + b^p)$ for $a,b \ge 0$, and \cref{eq:approx_prop_weighted_L2},
\begin{align*}
\sum_{E \in \mathcal{E}_h} h_E^{1-p} \norm{w - \pi_{k,\phi} w}_{L^p(\partial E)}^p
&\lesssim 
\sum_{E \in \mathcal{E}_h} \del{ h_E^{-p}  h_E^{p} \vert w \vert_{W^{1,p}(E)}^p+ h_E^{1-p} h_E^{p-1} \vert w\vert_{W^{1,p}(E)}^p } 
 \lesssim  |w|_{W^{1,p}(\Omega)}^p.
\end{align*}

\noindent 
Observe that $\Pi^{\rm SZ} w \in C^0(\overline\Omega) \cap W_h$, and hence has a single-valued trace on $\partial \mathcal{E}_h$. 
It follows that $(\Pi^{\rm SZ} w)|_{\partial \mathcal{E}_h} \in M_h$. 
Proceeding as above, 
\begin{align*}
	&\sum_{E \in \mathcal{E}_h} h_E^{1-p} \norm[1]{w - \Pi^{\rm SZ}  w}_{L^p(\partial E)}^p
            \lesssim \sum_{E \in \mathcal{E}_h} |w|_{W^{1,p}(\Delta_E)}^p 
              \lesssim |w|_{W^{1,p}(\Omega)}^p,
\end{align*}
where we have used the approximation properties of the Scott--Zhang quasi-interpolant stated in \cref{eq:SZ_approx}.
The result easily follows from the bounds above.
\end{proof}

\begin{lemma}[Time derivative bound] \label{lem:time_der_bnd}
Let $c_h \in \mathcal{W}_h$ be the discrete concentration solution to the system \crefrange{eq:discrete_transport_step_a}{eq:discrete_transport_step_d}, and $\tilde c_h$ be its piecewise affine interpolant in time defined in \cref{eq:c_interp_time}. Then,
there exists a constant $C>0$, independent of $h$ and $\tau$, such that
	\begin{equation}\label{eq:timederbound}
	\norm{\partial_t \tilde{c}_h}_{L^{2}(0,T;W^{1,2d}(\Omega)^\star)} \le C.
	\end{equation}
\end{lemma}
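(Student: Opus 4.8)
The plan is to use that on each interval $(t^{i-1},t^i)$ one has $\partial_t\tilde c_h=\delta_\tau c_h^i$, so that the claim reduces to the space--time bound $\tau\sum_{i=1}^N\|\delta_\tau c_h^i\|_{W^{1,2d}(\Omega)^\star}^2\le C$. I would estimate the pairing by duality: fix $w\in W^{1,2d}(\Omega)$ and control the quantity $(\phi\,\delta_\tau c_h^i,w)_{\mathcal{E}_h}$, which is exactly what appears in the weak form \eqref{eq:weak3} and what the scheme delivers (the strictly positive, bounded weight $\phi$ is harmless for the compactness argument that follows, so it suffices to bound this weighted pairing). The key reduction is that, since $\delta_\tau c_h^i\in W_h$, the defining orthogonality of the weighted projection gives $(\phi\,\delta_\tau c_h^i,w)_{\mathcal{E}_h}=(\phi\,\delta_\tau c_h^i,\pi_{k,\phi}w)_{\mathcal{E}_h}$, and by \eqref{eq:discrete_transport_step_c} the right-hand side equals the spatial form evaluated at the admissible test function $w_h=\pi_{k,\phi}w\in W_h$. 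To neutralize the numerical-flux contribution I would simultaneously take $\widehat w_h=(\Pi^{\rm SZ}w)|_{\partial\mathcal{E}_h}\in M_h$ in the conservativity relation \eqref{eq:discrete_transport_step_d}; subtracting it replaces $w_h$ by $w_h-\widehat w_h$ in the flux term, which is then controlled through the HDG jump seminorm built into the norm \eqref{eq:disc_h1_norm}.

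It then remains to bound each term of the resulting spatial form by a time-$t^i$ stability quantity times $\|w\|_{W^{1,2d}(\Omega)}$. For the diffusive flux I would use H\"older with the conjugate pair $(\tfrac{2d}{2d-1},2d)$, combining \Cref{lem:flux_Lp_bnd} for $\bm q_h^i$ with the projection stability $\|\nabla\pi_{k,\phi}w\|_{L^{2d}(\Omega)}\lesssim\|w\|_{W^{1,2d}(\Omega)}$ from \Cref{prop:approx_prop_weighted_L2_SZ}. For the convective products $\bm U_h^i c_h^i$ I would use the three-factor H\"older inequality with exponents $(2,\tfrac{2d}{d-1},2d)$, whose reciprocals sum to one, together with the uniform $L^2$ bound on $\bm U_h^i$ from \Cref{lem:stab_flow} and the embedding $\|c_h^i\|_{L^{2d/(d-1)}(\Omega)}\lesssim\|(c_h^i,\widehat c_h^{\,i})\|_{1,h}$ given by the endpoint $p=\tfrac{2d}{d-1}$ of the HDG Poincar\'e inequality \Cref{lem:HDG_poincare}. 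The half-dual term $\bm U_h^i\cdot\nabla c_h^i$ is handled by pairing $\bm U_h^i,\nabla_h c_h^i\in L^2(\Omega)$ against $w_h\in L^\infty(\Omega)$, using $W^{1,2d}(\Omega)\hookrightarrow L^\infty(\Omega)$ (valid as $2d>d$) and the elementwise $L^\infty$-stability of $\pi_{k,\phi}$; the facet terms are treated with the multiplicative trace and scaling inequalities applied to $w_h-\widehat w_h$, paired against the weighted jumps $(\sigma_D^i+|\bm U_h^i\cdot\bm n|)^{1/2}(c_h^i-\widehat c_h^{\,i})$ controlled in \eqref{eq:boundconc}; and the source terms use $f_I\in L^\infty(0,T;L^2(\Omega))$ and $\overline c\in L^2(0,T;L^{2d/(d-1)}(\Omega))$ with $\|w_h\|_{L^\infty(\Omega)}\lesssim\|w\|_{W^{1,2d}(\Omega)}$.

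Having bounded every term by $\|w\|_{W^{1,2d}(\Omega)}$ times a spatial stability quantity at time $t^i$, I would take the supremum over $w$, square, multiply by $\tau$ and sum over $i$. Each resulting group is a product of a factor uniformly bounded in $i$ (typically $\max_i\|\bm U_h^i\|_{L^2(\Omega)}$, finite by \Cref{lem:stab_flow}) with one of the already-established space--time sums, namely $\tau\sum_i\|(c_h^i,\widehat c_h^{\,i})\|_{1,h}^2$ from \eqref{eq:norm1hconc}, $\tau\sum_i\|\bm q_h^i\|_{L^{2d/(2d-1)}(\Omega)}^2$ from \Cref{lem:flux_Lp_bnd}, and the diffusive/jump sums in \eqref{eq:boundconc}, each of which is $\le C$. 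This yields \eqref{eq:timederbound}.

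The main obstacle is precisely the exponent matching in the convective estimate: because the (discrete) velocity is controlled only in $L^2(\Omega)$ and the concentration only up to $L^{2d/(d-1)}(\Omega)$, the test gradient is forced to lie in $L^{2d}(\Omega)$ for the pairing to close, which is exactly what dictates the dual space $W^{1,2d}(\Omega)^\star$ (rather than the classical $H^{-1}$ or $W^{1,4}(\Omega)^\star$) and, via the endpoint of \Cref{lem:HDG_poincare}, the tighter restriction on $p$ flagged after the weak formulation. A secondary, purely technical point is the simultaneous admissibility of $\pi_{k,\phi}w$ and the single-valued Scott--Zhang trace as test functions in \eqref{eq:discrete_transport_step_c}--\eqref{eq:discrete_transport_step_d}, which \Cref{prop:approx_prop_weighted_L2_SZ} is tailored to supply.
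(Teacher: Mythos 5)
Your overall architecture matches the paper's proof: duality against $w \in W^{1,2d}(\Omega)$, the orthogonality $(\phi\,\delta_\tau c_h^i, w)_{\mathcal{E}_h} = (\phi\,\delta_\tau c_h^i, \pi_{k,\phi}w)_{\mathcal{E}_h}$ valid because $\delta_\tau c_h^i \in W_h$, testing \eqref{eq:discrete_transport_step_c}--\eqref{eq:discrete_transport_step_d} with the pair $(\pi_{k,\phi}w, (\Pi^{\rm SZ}w)|_{\partial\mathcal{E}_h})$, closing the estimate with \Cref{prop:approx_prop_weighted_L2_SZ}, \Cref{lem:stab_flow}, \Cref{lem:stab_transport} and \Cref{lem:flux_Lp_bnd}, and your exponent bookkeeping $(2, \tfrac{2d}{d-1}, 2d)$ is exactly the paper's (the paper merely organizes the argument as a bound for arbitrary discrete test functions followed by substitution of the projected pair). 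However, there is one genuine gap: your treatment of the half-dual term. You propose to bound $\tfrac12(\bm U_h^i\cdot\nabla c_h^i, w_h)$ by $\Vert \bm U_h^i\Vert_{L^2(\Omega)}\Vert \nabla_h c_h^i\Vert_{L^2(\Omega)}\Vert w_h\Vert_{L^\infty(\Omega)}$, which requires a uniform bound on $\tau\sum_{i}\Vert \nabla_h c_h^i\Vert_{L^2(\Omega)}^2$. No such bound is available. The stability estimate \eqref{eq:boundconc} controls only the HDG gradient $\bm G_h(c_h^i,\widehat c_h^{\,i}) = -\bm\theta_h^i$ together with the $O(1)$-weighted jumps, and the broken gradient differs from the HDG gradient by a lifting of the jumps whose $L^2$ norm costs a factor $h^{-1/2}$; indeed, by \eqref{eq:usefulboundforG},
\begin{equation*}
\Vert \nabla_h c_h^i - \bm G_h(c_h^i,\widehat c_h^{\,i})\Vert_{L^2(\Omega)} \lesssim h^{-1/2}\,\Vert c_h^i - \widehat c_h^{\,i}\Vert_{L^2(\partial\mathcal{E}_h)}.
\end{equation*}
Since $\sigma_D^i = O(1)$ rather than $O(h^{-1})$, the jump control in \eqref{eq:boundconc} only gives $\tau\sum_i \Vert \nabla_h c_h^i\Vert_{L^2(\Omega)}^2 \lesssim 1 + h^{-1}$, which is not uniform in $h$. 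This is precisely the phenomenon the paper emphasizes in its introduction: with the classical, mesh-independent HDG stabilization, broken-gradient (BV-type) bounds on the discrete concentration fail.

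The paper's proof avoids ever letting $\nabla_h c_h^i$ appear: using the product rule $\nabla_h\cdot(\bm U_h^i c_h^i) = \bm U_h^i\cdot\nabla_h c_h^i + c_h^i\,\nabla_h\cdot\bm U_h^i$ and integrating by parts elementwise, the half-dual term is converted into (i) $(\bm U_h^i c_h^i, \nabla w_h)_{\mathcal{E}_h}$, which merges with your half-primal estimate; (ii) the zeroth-order term $\tfrac12((\nabla\cdot\bm U_h^i)c_h^i, w_h)_{\mathcal{E}_h}$, controlled because $\nabla\cdot\bm U_h^i = \pi_k(f_I^i - f_P^i)$ by \eqref{eq:projdivu} (this is where $H(\mathrm{div})$-conformity of the reconstructed velocity enters); and (iii) facet terms of the form $\langle \bm U_h^i\cdot\bm n\, c_h^i, w_h - \widehat w_h\rangle_{\partial\mathcal{E}_h}$, handled like your other facet contributions (note also that the facet term $\langle \bm q_h^i\cdot\bm n, w_h - \widehat w_h\rangle_{\partial\mathcal{E}_h}$ needs a discrete trace inequality on $\bm q_h^i$ in $L^{2d/(2d-1)}$, not the weighted concentration jumps). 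If you replace your direct pairing of $\nabla_h c_h^i$ with this product-rule/integration-by-parts step, the remainder of your argument goes through and coincides with the paper's proof.
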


\begin{proof}
Owing to the assumption that $0 < \phi_0 \le \phi \le \phi_1$, we have
 \[
 \Vert \partial_t \tilde{c}_h\Vert_{L^{2}(0,T;W^{1,2d}(\Omega)^\star)}
 \lesssim 
 \Vert \phi^{1/2}\partial_t \tilde{c}_h\Vert_{L^{2}(0,T;W^{1,2d}(\Omega)^\star)},
 \]
 and we observe that for any $w\in C^\infty(0,T;W^{1,2d}(\Omega))$
 \[
 \int_0^T \int_\Omega \phi \partial_t \tilde{c}_h w
  = \sum_{i=1}^N \int_{t_{i-1}}^{t_i} \phi \partial_\tau c_h^i \, w.
 \]

\noindent	\textbf{Step one.}
For the moment, consider \cref{eq:discrete_transport_step_c} and \cref{eq:discrete_transport_step_d} with $(w_h,\widehat w_h) \in L^2(0,T;W_h) \times L^2(0,T;M_h)$ kept arbitrary.
	%
	Since $ \nabla_h \cdot (\bm U_h^i c_h^i) = \bm U_h^i \cdot \nabla_h c_h^i + c_h^i\nabla_h \cdot \bm U_h^i$ and $\tfrac{1}{2}\bm U_h^i \cdot \bm n \widehat c_h^{\,i} = -\tfrac{1}{2}\bm U_h^i \cdot \bm n (c_h^i -\widehat c_h^{\,i}) + \tfrac{1}{2}\bm U_h^i \cdot \bm n c_h^i $, integrate by parts to find
    	\begin{align*}
		(\phi \delta_\tau c_h^i, w_h)_{\mathcal{E}_h}  =
		( \bm{q}_h^i, \nabla w_h)_{\mathcal{E}_h} + (\bm{U}_h^i c_h^i, \nabla w_h)_{\mathcal{E}_h} +\tfrac{1}{2}((\nabla \cdot \bm U_h^i)c_h^i, w_h)_{\mathcal{E}_h}  \\  
  -  \langle \bm{q}_h^i \cdot \bm{n}, w_h - \widehat w_h \rangle_{\partial \mathcal{E}_h} -  \langle (\sigma_D^i + |\bm U_h^i \cdot \bm n| - \tfrac{1}{2}\bm U_h^i \cdot \bm n  ) (c_h^i - \widehat{c}_h^{\,i}), w_h - \widehat w_h \rangle_{\partial \mathcal{E}_h} \\
  - \langle \bm U_h^i \cdot \bm n \, c_h^i, w_h - \widehat w_h \rangle_{\partial \mathcal{E}_h}    - \tfrac{1}{2}((f_I^i + f_P^i) c_h^i, w_h)_{\mathcal{E}_h} + (f_I^i 
		\overline{c}^i,w_h)_{\mathcal{E}_h}  \\
= T_1 + \dots + T_8.
	\end{align*}
	%
	From H\"{o}lder's inequality and \eqref{eq:qh_bnd_inter}, we have
	\begin{align*}
	T_1 &\le \norm[0]{\bm q_h^i}_{L^{2d/(2d-1)}(\Omega)} \norm[1]{ \nabla_h w_h}_{L^{2d}(\Omega)} 
    \\
	& 
    \lesssim  \norm[1]{\bm{D}^{1/2}(\bm U_h^i) \bm \theta_h^i}_{L^2(\Omega)} \norm[1]{(w_h,\widehat w_h)}_{W^{1,2d}(\mathcal{E}_h)}.
	\end{align*}
	From H\"{o}lder's inequality and the discrete Sobolev inequality \cref{eq:disc_embed},
	\begin{align*}
	T_2 &\le \norm[1]{\bm U_h^i}_{L^2(\Omega)}  \norm[1]{c_h^i}_{L^{2d/(d-1)}(\Omega)} \norm[1]{\nabla_h w_h}_{L^{2d}(\Omega)} \\
	& \lesssim \norm[1]{\bm U_h^i}_{L^2(\Omega)}  \norm[1]{ (c_h^i, \widehat{c}_h^{\,i})}_{1,h} \norm[1]{(w_h,\widehat{w}_h)}_{W^{1,2d}(\mathcal{E}_h)} .
	\end{align*}
	From H\"older's inequality, the fact that $\nabla \cdot \bm U_h^i = \pi_k(f_I^i-f_P^i)\in Q_h$, the stability of $\pi_k$ in $L^2(\Omega)$, and \cref{eq:disc_embed}, 
	\begin{align*}
	T_3 &\lesssim \norm[1]{\nabla \cdot \bm U_h^i}_{L^2(\Omega)}  \norm[1]{c_h^i}_{L^{2d/(d-1)}(\Omega)} \Vert w_h \Vert_{L^{2d} (\Omega)}  \\
    &\lesssim \norm[1]{f_I^i - f_P^i}_{L^2(\Omega)}  \norm[1]{ (c_h^i, \widehat{c}_h^{\,i})}_{1,h}\norm[1]{(w_h,\widehat w_h)}_{W^{1,2d}(\mathcal{E}_h)}. 
	\end{align*}
 	Next, H\"{o}lder's inequality, followed by a discrete trace inequality yield 
	\begin{align*}
		T_4 & \le \sum_{E \in \mathcal{E}_h} \norm[0]{\bm q_h^i \cdot \bm n}_{L^{2d/(2d-1)}(\partial E)}  \norm{w_h - \widehat w_h}_{L^{2d}(\partial E)} \\
		&  \lesssim \sum_{E \in \mathcal{E}_h} h_E^{(1-2d)/2d} \norm[0]{\bm q_h^i}_{L^{2d/(2d-1)}(E)}  \norm{w_h - \widehat w_h}_{L^{2d}(\partial E)}. 
	\end{align*}  
	Then by H\"older's inequality and \eqref{eq:qh_bnd_inter}, we have
	\begin{align*}
		T_4 
		&  \lesssim \del[3]{ \sum_{E \in \mathcal{E}_h}  \norm[0]{\bm q_h^i}_{L^{2d/(2d-1)}(E)}^{2d/(2d-1)}}^{(2d-1)/(2d)} \del[3]{\sum_{E \in \mathcal{E}_h} h_E^{1-2d} \norm{w_h - \widehat w_h}_{L^{2d}(\partial E)}^{2d}}^{1/(2d)}
        \\
		& \lesssim  
        \norm[1]{\bm{D}^{1/2} (\bm U_h^i) \bm \theta_h^i}_{L^2(\Omega)} \norm{(w_h,\widehat w_h)}_{W^{1,2d}(\mathcal{E}_h)}. 
	\end{align*}  
	Turning to the term $T_5$, the Cauchy-Schwarz's inequality and the fact that $|\sigma_D^i + |\bm{U}_h^i \cdot \bm n | - \tfrac{1}{2} \bm{U}_h^i \cdot \bm n |\le \tfrac{3}{2}(\sigma_D^i + |\bm{U}_h^i \cdot \bm n |)$ yield
	\begin{align*}
		T_5 & \lesssim	\Vert  ( \sigma_D^i +   |\bm{U}_h^i \cdot \bm n | )^{1/2} (c_h^i - \widehat{c}_h^{\,i}) \Vert_{L^2(\partial	\mathcal{E}_h)}\, \Vert (\sigma_D^i + |\bm U_h^i \cdot \bm n|)^{1/2}(w_h- \widehat w_h)\Vert_{L^{2}(\partial \mathcal{E}_h)}.
	\end{align*}
	To bound the second factor on the right-hand side, observe that by the definition of the stabilization function \eqref{eq:diff_stab_func} and the assumption \eqref{eq:diffdisp} on the tensor $\bm D(\cdot)$, we have
	\begin{align*}
		\norm[1]{(\sigma_D^i + |\bm U_h^i \cdot \bm n|)^{1/2}(w_h-\widehat w_h)}_{L^{2}(\partial \mathcal{E}_h)}^2 
		\lesssim \sum_{E \in \mathcal{E}_h}  \int_{\partial E} (1+|\bm U_h^i|) (w_h - \widehat{w}_h)^2 \dif s.
	\end{align*}
	Further, H\"{o}lder's inequality yields
	\begin{align}
		\sum_{E \in \mathcal{E}_h} &\int_{\partial E} (1+|\bm U_h^i|) (w_h - \widehat{w}_h)^2 \dif s \nonumber\\ 
		& \le \sum_{E \in \mathcal{E}_h} \norm{w_h - \widehat w_h}_{L^2(\partial E)}^2 +\sum_{E \in \mathcal{E}_h}\norm[1]{ \bm U_h^i }_{L^{d/(d-1)}(\partial E)}\norm{w_h - \widehat w_h}_{L^{2d}(\partial E)}^{2}.\label{eq:T5int}
	\end{align}
	To estimate the first term on the right-hand side of \eqref{eq:T5int}, we again apply H\"{o}lder's inequality to find
	\begin{align*}
		\sum_{E \in \mathcal{E}_h} \norm{w_h - \widehat w_h}_{L^2(\partial E)}^2 
        & \lesssim \sum_{E \in \mathcal{E}_h} \del{ |\partial E|^{(d-1)/d} \norm{w_h - \widehat{w}_h}_{L^{2d}(\partial E)}^{2}} \\
		& \lesssim  \del[3]{\sum_{E \in \mathcal{E}_h} |E| }^{(d-1)/d}\del[3]{\sum_{E \in \mathcal{E}_h}  \frac{|\partial E|^{d-1}}{|E|^{d-1}}\norm{w_h - \widehat{w}_h}_{L^{2d}(\partial E)}^{2d}}^{1/d} \\
		& \lesssim  |\Omega|^{1-1/d}\norm{(w_h,\widehat w_h)}_{W^{1,2d}(\mathcal{E}_h)}^2.
	\end{align*}
	For the second term on the right-hand side of \eqref{eq:T5int}, we apply a discrete trace inequality, H\"older's inequality (for sums), and the continuity of the embedding of $L^2(\Omega)$ into $L^{d/(d-1)}(\Omega)$ for $d \ge 2$:
	\begin{align*}
		\sum_{E \in \mathcal{E}_h}\norm[1]{  \bm U_h^i }_{L^{d/(d-1)}(\partial E)}\norm{w_h - \widehat w_h}_{L^{2d}(\partial E)}^{2}  
		&\lesssim  |\Omega|^{1/2 - 1/d}\norm[1]{ \bm U_h^i}_{L^2(\Omega)} \norm{(w_h,\widehat w_h)}_{W^{1,2d}(\mathcal{E}_h)}^2.
	\end{align*}
	Therefore, combining the bounds above, we obtain
	\begin{align*}
		T_5 \lesssim \Vert ( \sigma_D^i +   |\bm U_h^i \cdot \bm n | )^{1/2} (c_h^i - \widehat{c}_h^{\,i}) 
			\Vert_{L^2(\partial 
				\mathcal{E}_h)} (1 + \norm[1]{ \bm U_h^i}_{L^2(\Omega)}^{1/2}) \norm{(w_h,\widehat w_h)}_{W^{1,2d}(\mathcal{E}_h)}.
	\end{align*}
    Further, by H\"{o}lder's inequality and a discrete trace inequality,
    \begin{align*}
        T_6
                &\lesssim \sum_{E \in \mathcal{E}_h} h_E^{1/(2d) - 1}\norm[1]{\bm U_h^i}_{L^2(E)} \norm[0]{c_h^i}_{L^{2d/(d-1)}(E)} \norm{w_h - \widehat w_h}_{L^{2d}(\partial E)}, 
    \end{align*}
    and therefore, with H\"older's inequality and \eqref{eq:disc_embed}, we can write 
    \begin{align*}
              T_6  & \lesssim \norm[1]{\bm U_h^i}_{L^2(\Omega)} \norm[0]{c_h^i}_{L^{2d/(d-1)}(\Omega)}   
              \del[2]{\sum_{E \in \mathcal{E}_h} h_E^{1 - 2d} \norm{w_h - \widehat w_h}_{L^{2d}(\partial E)}^{2d}}^{1/2d}\\
                & \lesssim \norm[1]{\bm U_h^i}_{L^2(\Omega)} \norm[0]{(c_h^i,\widehat c_h^{\,i})}_{1,h}   \norm{(w_h,\widehat{w}_h)}_{W^{1,2d}(\mathcal{E}_h)}.
    \end{align*}
	As for remaining terms $T_7$ and $T_8$, we directly apply H\"older's inequality and \cref{eq:disc_embed}:
	\begin{align*}
	T_7 
    & \lesssim \norm[1]{f_I^i + f_P^i}_{L^2(\Omega)} \norm[1]{ (c_h^i, \widehat{c}_h^{\,i})}_{1,h} \norm[1]{(w_h,\widehat w_h)}_{W^{1,2d}(\mathcal{E}_h)}, \\
	T_8 
    &\lesssim  \norm[1]{f_I^i}_{L^2(\Omega)} \Vert\overline{c}^i\Vert_{L^{2d/(d-1)}(\Omega)}\norm[1]{(w_h,\widehat w_h)}_{W^{1,2d}(\mathcal{E}_h)}.
	\end{align*}
	Combining the above bounds and using Theorem~\ref{lem:stab_flow} and that $f_I,f_P \in \ell^{\infty}(0,T;L^2(\Omega))$,
	\begin{align}
	(\phi  \delta_\tau c_h^i, & w_h)_{\mathcal{E}_h} \nonumber\\
 \lesssim & \del[2]{\norm[1]{\bm{D}^{1/2}(\bm U_h^i) \bm \theta_h^i}_{L^2(\Omega)} + \norm[1]{(c_h^i, \widehat c_h^{\,i})}_{1,h}+  \Vert\overline{c}^i\Vert_{L^{2d/(d-1)}(\Omega)}}\norm[1]{(w_h, \widehat w_h)}_{W^{1,2d}(\mathcal{E}_h) }  \notag  \\
 & + \Vert (\sigma_D^i +\vert \bm U_h^i \cdot \bm n\vert)^{1/2}
 (c_h^i-\widehat{c}_h^{\,i})\Vert_{L^2(\partial\mathcal{E}_h)}
 \norm[1]{(w_h, \widehat w_h)}_{W^{1,2d}(\mathcal{E}_h) }.
 \label{eq:time_der_bnd_step_one}
	\end{align}
Therefore, we have
\begin{align*}
&\int_0^T (\phi\partial_t \tilde{c}_h, w_h)_{\mathcal{E}_h}
= \sum_{i=1}^N \int_{t_{i-1}}^{t_i} (\phi \delta_\tau c_h^i, w_h)_{\mathcal{E}_h}
\lesssim \Vert (w_h, \widehat w_h)\Vert_{L^2(0,T;W^{1,2d}(\mathcal{E}_h))}\\
& \times 
\del[3]{ \tau \sum_{i=1}^N (\Vert \bm D(\bm U_h^i)^{1/2}\bm \theta_h^i \Vert_{L^2(\Omega)}^2
+ \Vert (c_h^i, \widehat{c}_h^{\,i})\Vert_{1,h}^2
+ \Vert \overline{c}^i \Vert_{L^{2d/(d-1)}(\Omega)}^2}^{1/2}\\
&+ \Vert (w_h, \widehat w_h)\Vert_{L^2(0,T;W^{1,2d}(\mathcal{E}_h))} \del[3]{\tau
\sum_{i=1}^N  \Vert (\sigma_D^i +\vert \bm U_h^i \cdot \bm n\vert)^{1/2}
 (c_h^i-\widehat{c}_h^{\,i})\Vert_{L^2(\partial\mathcal{E}_h)}^2}^{1/2}.
\end{align*}
With Theorem~\ref{lem:stab_transport},  and the assumption that $\overline{c}\in L^2(0,T;L^{2d/(d-1)}(\Omega))$, we conclude for any $(w_h,\widehat w_h)\in L^2(0,T;W_h)\times L^2(0,T;M_h)$
\[
\int_0^T (\phi\partial_t \tilde{c}_h, w_h)_{\mathcal{E}_h}
\lesssim \Vert (w_h, \widehat w_h)\Vert_{L^2(0,T;W^{1,2d}(\mathcal{E}_h))}.
\]
	\noindent
	\textbf{ Step two.} 
    Fix $w \in C^{\infty}(0,T;C^\infty(\Omega))$.  For any $t>0$, denote $(w_h(t,\cdot), \widehat w_h(t,\cdot)) = (\pi_{k,\phi} w(t,\cdot), \Pi^{\rm SZ} w(t,\cdot))\in W_h\times M_h$. Since $\delta_\tau c_h^i\in W_h$, 
\begin{align*}
 \int_0^T \int_\Omega \phi \partial_t \tilde{c}_h w
 & = \sum_{i=1}^N \int_{t_{i-1}}^{t_i} (\phi \delta_\tau c_h^i, w_h)_{\mathcal{E}_h}
  \lesssim \Vert (w_h, \widehat w_h)\Vert_{L^2(0,T;W^{1,2d}(\mathcal{E}_h))}.
 \end{align*}
Using the triangle inequality and \cref{eq:W1p_H2_bnd} with $p=2d$, 
\begin{align*}
\Vert (w_h, \widehat w_h)\Vert_{L^2(0,T;W^{1,2d}(\mathcal{E}_h))}
\lesssim & \Vert (w, w)\Vert_{L^2(0,T;W^{1,2d}(\mathcal{E}_h))} = \Vert w \Vert_{L^2(0,T;W^{1,2d}(\Omega))},
 \end{align*}
 and we conclude with a density argument.
\end{proof}

\subsection{Compactness results for the transport problem}
\label{ss:compact_transport}

The bounds obtained in \Cref{lem:flux_Lp_bnd} and \Cref{sss:compact_time_deriv} are
the starting point for the compactness result:
\begin{theorem}\label{thm:compconc}
	There exist functions $\check{\bm q} \in L^{2}(0,T;L^{2d/(2d-1)}(\Omega)^d) $ and \\ $\check{c}\in L^\infty(0,T;L^2(\Omega)) \cap L^2(0,T;H^1(\Omega))$, such that, upon passing to a subsequence if necessary, 
 \begin{subequations}
	\begin{align}
		c_h &\rightharpoonup^\star \check{c}, \quad \text{ in } L^\infty(0,T;L^2(\Omega)), \label{eq:c_weakstar_conv}\\
		c_h & \to \check{c}, \quad \text{ in } L^2(0,T;L^2(\Omega)), \label{eq:c_strong_conv}\\
  		\tilde c_h & \to \check{c}, \quad \text{ in } L^2(0,T;L^2(\Omega)), \label{eq:c_tilde_strong_conv}\\
        c_h^{-} & \to \check{c}, \quad \text{ in } L^2(0,T;L^2(\Omega)), \label{eq:c_bar_strong_conv}\\
  		 \partial_t &\tilde c_h \rightharpoonup \partial_t \check{c}, \quad \text{ in } L^2(0,T;H^2(\Omega)^\star), \label{eq:time_der_conv}\\
		\bm G_h(c_h, \widehat c_h) & \rightharpoonup \nabla \check{c}, \quad \text{ in } L^2(0,T;L^2(\Omega)^d), \label{eq:grad_weak_conv}\\
		\bm q_h & \rightharpoonup \check{\bm q}, \quad \text{ in } L^{2}(0,T;L^{2d/(2d-1)}(\Omega)^d). \label{eq:flux_weak_conv}
	\end{align}
 \end{subequations}
\end{theorem}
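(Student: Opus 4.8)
The plan is to first harvest the uniform a priori bounds and extract the weak limits, then to establish the single hard ingredient --- strong compactness of the concentration in $L^2(0,T;L^2(\Omega))$ --- and finally to identify every limit. From \Cref{lem:stab_transport} we have $\|c_h\|_{L^\infty(0,T;L^2(\Omega))}\le C$ together with $\int_0^T\|(c_h,\widehat c_h)\|_{1,h}^2\,\dif t\lesssim C$ by \eqref{eq:norm1hconc}; from \Cref{lem:flux_Lp_bnd} we have $\|\bm q_h\|_{L^2(0,T;L^{2d/(2d-1)}(\Omega))}^2=\tau\sum_i\|\bm q_h^i\|_{L^{2d/(2d-1)}(\Omega)}^2\le C$; and from \Cref{lem:time_der_bnd} we have the bound \eqref{eq:timederbound} on $\partial_t\tilde c_h$. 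Banach--Alaoglu applied to the first bound immediately yields \eqref{eq:c_weakstar_conv}, and since $L^{2d/(2d-1)}(\Omega)$ is reflexive, the second bound yields \eqref{eq:flux_weak_conv} for some $\check{\bm q}$, both up to a subsequence.

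The crux is \eqref{eq:c_tilde_strong_conv}, the strong convergence of the affine interpolant $\tilde c_h$. Here the classical Aubin--Lions--Simon theorem does not apply directly because the discrete $H^1$-type norm $\|\cdot\|_{1,h}$ lives on $h$-dependent spaces, and this is the main obstacle of the whole theorem. Instead I would verify the hypotheses of a discrete Aubin--Lions--Simon argument. The spatial compactness is furnished by the discrete Rellich--Kondrachov result \Cref{lem:HDG_RK}, which together with \eqref{eq:norm1hconc} gives relative compactness of $\{(c_h,\widehat c_h)\}$ into $L^2(\Omega)$ as $h\to0$. The time regularity is furnished by \eqref{eq:timederbound}: writing $\tilde c_h(t+s)-\tilde c_h(t)=\int_t^{t+s}\partial_t\tilde c_h$ and using Cauchy--Schwarz and Fubini gives the uniform time-translation estimate \[ \|\tilde c_h(\cdot+s)-\tilde c_h\|_{L^2(0,T-s;W^{1,2d}(\Omega)^\star)}\lesssim s^{1/2}, \] which tends to $0$ with $s$ uniformly in $h$. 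The two are bridged by a discrete Ehrling lemma in the spirit of \cite{droniou2018gradient}: for every $\epsilon>0$ there is $C_\epsilon$, independent of $h$, with $\|w_h\|_{L^2(\Omega)}\le\epsilon\|(w_h,\widehat w_h)\|_{1,h}+C_\epsilon\|w_h\|_{W^{1,2d}(\Omega)^\star}$, whose proof rests precisely on \Cref{lem:HDG_RK}. Combining these delivers the Kolmogorov-type criterion guaranteeing that $\{\tilde c_h\}$ is relatively compact in $L^2(0,T;L^2(\Omega))$, so a subsequence converges strongly to some $\check c$, establishing \eqref{eq:c_tilde_strong_conv}.

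With the strong limit in hand, the remaining claims follow mechanically. From the identities $c_h-\tilde c_h=(t_i-t)\,\delta_\tau c_h^i$ and $c_h^--\tilde c_h=-(t-t_{i-1})\,\delta_\tau c_h^i$ on $(t_{i-1},t_i]$ one computes \[ \|c_h-\tilde c_h\|_{L^2(0,T;L^2(\Omega))}^2=\sum_{i}\frac{\tau^3}{3}\|\delta_\tau c_h^i\|_{L^2(\Omega)}^2\lesssim\tau\,\Big(\tau^2\sum_i\|\phi^{1/2}\delta_\tau c_h^i\|_{L^2(\Omega)}^2\Big), \] and likewise for $c_h^-$; the parenthesized quantity is bounded by \eqref{eq:boundconc}, so both differences vanish as $\tau\to0$, yielding \eqref{eq:c_strong_conv} and \eqref{eq:c_bar_strong_conv} with the same limit $\check c$. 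Since strong $L^2(0,T;L^2(\Omega))$ convergence of $c_h$ implies weak convergence to the same limit, the weak-star limit in \eqref{eq:c_weakstar_conv} is identified as $\check c$, and $\check c\in L^\infty(0,T;L^2(\Omega))$.

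Finally, \eqref{eq:grad_weak_conv} follows from \Cref{cor:spacetime_grad_conv} applied to $(c_h,\widehat c_h)$ using \eqref{eq:norm1hconc}: the weak $L^2$ limit of $c_h$ must equal $\check c$ by uniqueness of limits, so $\check c\in L^2(0,T;H^1(\Omega))$ and $\bm G_h(c_h,\widehat c_h)\rightharpoonup\nabla\check c$. For \eqref{eq:time_der_conv}, the embedding $H^2(\Omega)\hookrightarrow W^{1,2d}(\Omega)$ (valid for $d=2,3$) dualizes to $W^{1,2d}(\Omega)^\star\hookrightarrow H^2(\Omega)^\star$, so \eqref{eq:timederbound} gives a weakly convergent subsequence $\partial_t\tilde c_h\rightharpoonup\chi$ in $L^2(0,T;H^2(\Omega)^\star)$; testing against $v\,\psi'$ with $v\in H^2(\Omega)$, $\psi\in C_c^\infty(0,T)$ and using $\tilde c_h\to\check c$ identifies $\chi=\partial_t\check c$. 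Extracting a single subsequence along which all of the above hold, by successive extraction over the countable family $\mathcal H$, completes the proof.
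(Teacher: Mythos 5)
Your overall strategy coincides with the paper's: uniform bounds plus Banach--Alaoglu for the weak and weak-star limits, a discrete Aubin--Lions/Simon argument (HDG Rellich--Kondrachov plus discrete Ehrling plus time-translate estimates) for the strong $L^2(0,T;L^2(\Omega))$ convergence, the $O(\tau)$ interpolant-difference identity to pass the strong limit among $c_h$, $\tilde c_h$, $c_h^-$, \Cref{cor:spacetime_grad_conv} for the gradient, and uniqueness of distributional limits to identify everything. Indeed, the paper packages exactly your ``crux'' step as \Cref{lem:convChapp}, proved in the appendix by mollification in time, Arzel\`a--Ascoli, a time-translate bound, and \Cref{lem:Ehrling}, and its proof of the theorem simply cites that lemma. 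Your treatment of \eqref{eq:time_der_conv} is in fact more careful than the paper's: you bridge the $W^{1,2d}(\Omega)^\star$ bound of \eqref{eq:timederbound} to the space $H^2(\Omega)^\star$ appearing in the statement via $H^2(\Omega)\hookrightarrow W^{1,2d}(\Omega)$, whereas the paper's proof silently works in $W^{1,2d}(\Omega)^\star$ throughout.

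There is, however, a genuine flaw in how you assemble the compactness step: you invoke the spatial ingredient for the piecewise-constant pair $(c_h,\widehat c_h)$ (via \eqref{eq:norm1hconc} and \Cref{lem:HDG_RK}) but the time-translate ingredient for the affine interpolant $\tilde c_h$ (via $\tilde c_h(t+s)-\tilde c_h(t)=\int_t^{t+s}\partial_t\tilde c_h$), and then conclude compactness of $\tilde c_h$. The Kolmogorov/Simon criterion needs both ingredients for the \emph{same} object. If you run it on $\tilde c_h$, the Ehrling step requires a bound on $\int_0^T\norm[0]{(\tilde c_h,\tilde{\widehat c}_h)}_{1,h}^2\dif t$, and on the first interval $(0,t_1]$ this brings in $\norm[0]{(c_h^0,\widehat c_h^{\,0})}_{1,h}$ with $c_h^0=\pi_k c_0$: since $c_0$ is only assumed in $L^2(\Omega)$, $\norm[0]{\bm G_h(\pi_k c_0,\widehat\pi_k c_0)}_{L^2(\Omega)}$ can scale like $h^{-1}\norm{c_0}_{L^2(\Omega)}$ and is not controlled by \eqref{eq:boundconc}--\eqref{eq:norm1hconc}, whose sums start at $i=1$. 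If instead you run it on $c_h$, your clean integral identity for translates is unavailable, and the translate bound must be obtained by summing $\norm[0]{\delta_\tau c_h^i}_{W^{1,2d}(\Omega)^\star}$ over the time levels crossed by the shift --- precisely step (ii) of the paper's proof of \Cref{lem:convChapp}, whose hypotheses \eqref{eq:compact_time_der_bnd}--\eqref{eq:compact_spat_der_bnd} are deliberately stated for the piecewise-constant interpolant. (A related gloss: \Cref{lem:HDG_RK} needs pointwise-in-time bounds on $\norm{\cdot}_{1,h}$, while \eqref{eq:norm1hconc} is only integrated in time; the paper bridges this by mollifying in time before applying Arzel\`a--Ascoli.) The fix is a reordering, exactly the paper's: establish \eqref{eq:c_strong_conv} for $c_h$ first, then deduce \eqref{eq:c_tilde_strong_conv} and \eqref{eq:c_bar_strong_conv} from the $O(\tau)$ estimate --- which is what your third paragraph already does, with the roles of $c_h$ and $\tilde c_h$ swapped.
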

\begin{proof}
From the stability bound in \Cref{lem:stab_transport}, 
the Banach--Alaoglu theorem (see, e.g. \cite[Corollary 3.30]{Brezis:book})
guarantees a $\check{c} \in L^\infty(0,T;L^2(\Omega))$ such that, up to a subsequence,
$c_h \rightharpoonup^\star \check{c}$ in $L^\infty(0,T;L^2(\Omega))$.
The bounds \eqref{eq:boundconc}, \eqref{eq:norm1hconc}, \eqref{eq:timederbound} with \Cref{lem:convChapp} imply the existence of $\tilde c \in L^2(0,T;L^2(\Omega))$ such that, passing to a further subsequence,
$c_h \to \tilde c$ in $L^2(0,T;L^2(\Omega))$.
That $\tilde{c} = c$ follows from the uniqueness of distributional limits, since we can conclude that $(\tilde c_h)$ converges in the weak-$\star$ topology of $\mathcal{D}'((0,T)\times \Omega))$ to both $c$ and $\tilde c$. 
Moreover, by \cref{lem:stab_transport},
\begin{align*}
\int_0^T \del{\norm[0]{c_h -\tilde c_h}_{L^2(\Omega)}^2+  \norm[0]{c_h^- -\tilde c_h}_{L^2(\Omega)}^2}\dif t
&= \frac{2\tau^2}{3} \sum_{i=1}^N \tau \norm[1]{ \delta_{\tau} c_h^i}_{L^2(\Omega)}^2 
\lesssim \tau, 
\end{align*}
which yields \eqref{eq:c_tilde_strong_conv} and \eqref{eq:c_bar_strong_conv}.
Next, we prove \cref{eq:time_der_conv}. Due to the bound in \Cref{lem:time_der_bnd}, 
there exists a $\dot{c} \in L^2(0,T;W^{1,2d}(\Omega)^\star)$ such that, up to a further subsequence,  
$\partial_t \tilde c_h \rightharpoonup \dot{c}$ in $L^2(0,T;W^{1,2d}(\Omega)^\star)$.
Once again, we use the uniqueness of distributional limits to conclude $\dot c = \partial_t \tilde c$. 
Since $c_h - \tilde c_h \to 0$ in $L^2(0,T;L^2(\Omega))$, it follows that $\tilde c = \check{c}$ and $\partial_t \tilde c = \partial_t \check{c}$. Next, we deduce from \eqref{eq:norm1hconc} and \Cref{cor:spacetime_grad_conv} the existence of a further subsequence such that
$\bm G_h (c_h, \widehat c_h) \rightharpoonup \bm \nabla c$ in $L^2(0,T;L^2(\Omega)^d)$.
Finally, \cref{eq:flux_weak_conv} is a direct consequence of \Cref{lem:flux_Lp_bnd}.
\end{proof}

\section{Convergence}
\label{sec:limit}

In the previous section, we established that sequences of discrete solutions $(\bm u_h, p_h, \widehat p_h)$ and $(\bm \theta_h, \bm q_h, c_h, \widehat c_h)$ to the discrete flow and transport problems, respectively, converge up to a subsequence in a suitable sense. In the present section, we identify their limits as solutions to the weak formulation \eqref{eq:weak1}-\eqref{eq:weak3}. We first state the main convergence results; its proof is a corollary of Theorem~\ref{thm:convHDGflow} and Theorem~\ref{thm:convconclim} below.

\begin{theorem}
 Let $k\geq 1$ and let $(\bm u_h, p_h, c_h)$ be the discrete velocity, pressure, and concentration satisfying the HDG scheme.
 Upon passage to a subsequence, the triple $(\bm u_h, p_h, c_h)$ converges to a weak solution $(\bm u, p, c)$ of \eqref{eq:weak1}-\eqref{eq:weak3}.  
\end{theorem}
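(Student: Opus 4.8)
The plan is to prove the theorem as the combination of two passage-to-the-limit arguments carried out along one common subsequence: identifying the flow limit (the content of \Cref{thm:convHDGflow}) and the transport limit (the content of \Cref{thm:convconclim}), after which $(\check{\bm u},\check p,\check c)$ is exhibited as a weak solution. The compactness already supplied by \Cref{thm:43} and \Cref{thm:compconc} furnishes, up to subsequences, the limits $(\check{\bm u},\check p)$ and $\check c$ together with $\bm u_h \rightharpoonup^\star \check{\bm u}$ and $\bm U_h\rightharpoonup^\star\check{\bm u}$ in $L^\infty(0,T;L^2(\Omega)^d)$, $\bm G_h(p_h,\widehat p_h)\rightharpoonup\nabla\check p$, $\bm G_h(c_h,\widehat c_h)\rightharpoonup\nabla\check c$ and $\bm q_h\rightharpoonup\check{\bm q}$ weakly, $\partial_t\tilde c_h\rightharpoonup\partial_t\check c$, and the strong convergences $c_h,c_h^-,\tilde c_h\to\check c$ in $L^2(0,T;L^2(\Omega))$. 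I first record that $\check{\bm U}=\check{\bm u}$, since $\bm U_h-\bm u_h\to0$ in $L^2$ by \cref{eq:par_el_calc} and the uniform bound \cref{eq:flow_linfl2_bounds}; I also note $\check{\bm u}\in L^\infty(0,T;H_0(\mathrm{div};\Omega))$ with $\nabla\cdot\check{\bm u}=f_I-f_P$ via \cref{eq:projdivu} and closedness of the divergence under weak convergence. The test functions I use are conforming: $\bm r_h=\bm{\Pi}^{\mathrm{BDM}}\bm r$ for smooth $\bm r$ in the vector equations, and $(w_h,\widehat w_h)=(\Pi^{\mathrm{SZ}}w,\mathrm{tr}\,\Pi^{\mathrm{SZ}}w)$ for smooth $w$ in the transport equation; the latter choice makes every jump $w_h-\widehat w_h$ vanish identically, so all HDG facet contributions drop out, and I close each argument by density.

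For the flow system I pass to the limit in \crefrange{eq:HMHDG_darcy_step_a}{eq:HMHDG_darcy_step_b} tested against BDM-projected smooth fields. The constraint \cref{eq:weak2} is immediate from \cref{eq:projdivu} and $\bm U_h\rightharpoonup^\star\check{\bm u}$. The only genuine point is the nonlinear coefficient: since $c_h^-\to\check c$ strongly in $L^2$ and $\bm K$ is Carath\'eodory and uniformly bounded, along a further subsequence $\bm K^{-1}(c_h^-)\to\bm K^{-1}(\check c)$ a.e. and boundedly, so $\bm K^{-1}(c_h^-)\bm r_h\to\bm K^{-1}(\check c)\bm r$ strongly in $L^2$ by dominated convergence; pairing with $\bm u_h\rightharpoonup\check{\bm u}$ recovers $\int_0^T(\bm K^{-1}(\check c)\check{\bm u},\bm r)-\int_0^T(\check p,\nabla\cdot\bm r)=0$, which is \cref{eq:weak1}, and identifies $\check{\bm u}=-\bm K(\check c)\nabla\check p$.

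For the transport equation I pass to the limit in the integrated-by-parts identity of \Cref{lem:time_der_bnd} with the conforming test pair above. The time-derivative pairing converges by \cref{eq:time_der_conv}; the term $T_2=(\bm U_h c_h,\nabla w_h)$ converges because $c_h\nabla w_h\to\check c\,\nabla w$ strongly in $L^2$ while $\bm U_h\rightharpoonup\check{\bm u}$ weakly; the terms $T_3$ and $T_7$ recombine, via $\nabla\cdot\bm U_h=\pi_k(f_I-f_P)$ and the strong convergence of $c_h$, into the reaction term of \cref{eq:weak3}; $T_8$ converges by strong convergence of $c_h$; and the facet terms $T_4,T_5,T_6$ vanish identically for the conforming choice. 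The remaining term is the diffusion--dispersion flux $T_1=(\bm q_h,\nabla w_h)$, where I must identify the weak limit $\check{\bm q}$ of $\bm q_h=\bm D(\bm U_h)\bm\theta_h$ with $-\bm D(\check{\bm u})\nabla\check c$; here $\bm q_h$ pairs the weakly convergent $\bm\theta_h=-\bm G_h(c_h,\widehat c_h)\rightharpoonup-\nabla\check c$ against the nonlinear, \emph{unbounded} tensor $\bm D(\bm U_h)$, and this is the crux.

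I expect this identification to be the main obstacle, and I would resolve it by first upgrading the velocity convergence to \emph{strong} convergence. Testing \crefrange{eq:HMHDG_darcy_step_a}{eq:HMHDG_darcy_step_c} with the discrete solution (as in the derivation of \cref{eq:flow_conv_pass}) gives the energy identity $\int_0^T(\bm K^{-1}(c_h^-)\bm u_h,\bm u_h)+\int_0^T\|\sigma_u^{1/2}(p_h-\widehat p_h)\|_{L^2(\partial\mathcal{E}_h)}^2=\int_0^T(f_I-f_P,p_h)$; letting $h\to0$, using $p_h\rightharpoonup\check p$ and discarding the nonnegative stabilization, $\limsup_h\int_0^T(\bm K^{-1}(c_h^-)\bm u_h,\bm u_h)\le\int_0^T(f_I-f_P,\check p)=\int_0^T(\bm K^{-1}(\check c)\check{\bm u},\check{\bm u})$, the last equality following from testing \cref{eq:weak1} with $\check{\bm u}$ and using $\nabla\cdot\check{\bm u}=f_I-f_P$. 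Combined with the a.e. convergence of $\bm K^{-1}(c_h^-)$ and coercivity $\bm K^{-1}\ge k_1^{-1}\bm I$, expanding $\int_0^T(\bm K^{-1}(c_h^-)(\bm u_h-\check{\bm u}),\bm u_h-\check{\bm u})$ forces $\bm u_h\to\check{\bm u}$, hence $\bm U_h\to\check{\bm u}$, strongly in $L^2(0,T;L^2(\Omega))$. With strong (and, on a subsequence, a.e.) velocity convergence in hand, for smooth $\bm\varphi$ I write $\int_0^T(\bm q_h,\bm\varphi)=\int_0^T(\bm\theta_h,\bm D(\bm U_h)\bm\varphi)$ by symmetry of $\bm D$; since $|\bm D(\bm U_h)\bm\varphi|\lesssim(1+|\bm U_h|)|\bm\varphi|$ with $\{|\bm U_h|^2\}$ uniformly integrable (being $L^2$-convergent), Vitali's theorem gives $\bm D(\bm U_h)\bm\varphi\to\bm D(\check{\bm u})\bm\varphi$ strongly in $L^2$, and the weak--strong pairing with $\bm\theta_h\rightharpoonup-\nabla\check c$ produces $\check{\bm q}=-\bm D(\check{\bm u})\nabla\check c$. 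Substituting this into $T_1$ completes \cref{eq:weak3}; the initial condition $\check c(0)=c_0$ follows from $c_h^0=\pi_k c_0\to c_0$ together with the time-regularity afforded by the bound on $\partial_t\tilde c_h$.
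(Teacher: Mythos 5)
Your proposal is correct and follows essentially the same route as the paper: compactness from Theorems \ref{thm:43} and \ref{thm:compconc}, identification of the flow limit via Carath\'eodory/dominated convergence and weak--strong pairing, strong $L^2$ convergence of $\bm u_h$ and $\bm U_h$ from the discrete energy identity combined with the already-established weak equations, conforming (jump-free) test functions that annihilate all facet terms in the transport equation, and finally the identification $\check{\bm q}=-\bm D(\check{\bm u})\nabla \check c$ by pairing the weakly convergent $\bm \theta_h=-\bm G_h(c_h,\widehat c_h)$ against $\bm D(\bm U_h)$ applied to a strongly convergent test field. Your small variations --- BDM rather than $L^2$ projections for the flow tests, the quadratic-expansion argument with coercivity of $\bm K^{-1}$ in place of the paper's weak lower semicontinuity/limsup bookkeeping, and Vitali's theorem in place of the paper's one-line use of the Lipschitz bound $\|\bm D(\bm U_h)-\bm D(\check{\bm u})\|_{L^2}\lesssim\|\bm U_h-\check{\bm u}\|_{L^2}$ --- are cosmetic and equally valid.
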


\subsection{Passing to the limit in the flow problem}

\begin{theorem}\label{thm:convHDGflow}
The pair of functions $(\check{\bm u}, \check{p})$ defined in Theorem~\ref{thm:43}, satisfies the equations \eqref{eq:weak1} and \eqref{eq:weak2}, with $c$ being replaced by $\check{c}$ defined in Theorem~\ref{thm:compconc}.
In addition, both the velocity $\bm u_h$ and the reconstructed velocity $\bm U_h$ 
converge strongly in $L^2(0,T;L^2(\Omega)^d)$ to $\check{\bm u}$.
\end{theorem}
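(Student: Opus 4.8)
The plan is to identify the weak limits, pass to the limit in the discrete Darcy system to recover \eqref{eq:weak1}--\eqref{eq:weak2}, and then upgrade the weak convergence of the velocity to strong convergence through an energy argument. First I would reconcile $\bm u_h$ and $\bm U_h$: integrating the post-processing estimate \eqref{eq:par_el_calc} in time and invoking \eqref{eq:flow_linfl2_bounds} gives $\int_0^T \norm{\bm U_h - \bm u_h}_{L^2(\Omega)}^2 \, dt \lesssim h \to 0$, so $\bm U_h - \bm u_h \to 0$ strongly in $L^2(0,T;L^2(\Omega)^d)$; since $\bm u_h \rightharpoonup \check{\bm u}$ and $\bm U_h \rightharpoonup^\star \check{\bm U}$, uniqueness of weak limits yields $\check{\bm u} = \check{\bm U}$. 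By \eqref{eq:projdivu}, $\nabla \cdot \bm U_h = \pi_k(f_{I,\tau} - f_{P,\tau})$ (with $f_{I,\tau}, f_{P,\tau}$ the piecewise-constant-in-time interpolants of $f_I^i, f_P^i$) is bounded in $L^2(0,T;L^2(\Omega))$, so $\bm U_h$ is bounded in $L^2(0,T;H(\mathrm{div};\Omega))$; moreover \eqref{eq:HMHDG_darcy_step_c} together with \eqref{eq:post_process_b} forces $\bm U_h \cdot \bm n = 0$ on $\partial\Omega$. Passing to the limit in $(\nabla \cdot \bm U_h, \varphi)_\Omega = -(\bm U_h, \nabla \varphi)_\Omega$ and using $\pi_k(f_{I,\tau} - f_{P,\tau}) \to f_I - f_P$ strongly then shows $\check{\bm u} \in L^2(0,T;H_0(\mathrm{div};\Omega))$ with $\nabla \cdot \check{\bm u} = f_I - f_P$, which is precisely \eqref{eq:weak2}.

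Next I would pass to the limit in the momentum equation \eqref{eq:HMHDG_darcy_step_a}. Fixing $\bm r \in C^\infty([0,T];C_c^\infty(\Omega)^d)$ and testing with $\bm r_h = \bm\Pi^{\mathrm{BDM}}\bm r$, the commutativity \eqref{eq:commutativity} converts $-(p_h^i, \nabla\cdot \bm\Pi^{\mathrm{BDM}}\bm r)$ into $-(p_h^i, \pi_{k-1}\nabla\cdot\bm r)$, while the facet term $\langle \widehat p_h^{\,i}, \bm\Pi^{\mathrm{BDM}}\bm r \cdot \bm n\rangle_{\partial\mathcal{E}_h}$ cancels: the normal trace of $\bm\Pi^{\mathrm{BDM}}\bm r$ is continuous across interior faces and vanishes on $\partial\Omega$, whereas $\widehat p_h^{\,i}$ is single-valued. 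Rewriting via the piecewise-constant-in-time interpolants and integrating over $(0,T)$ leaves
\[
\int_0^T (\bm K^{-1}(c_h^-) \bm u_h, \bm \Pi^{\mathrm{BDM}}\bm r)_\Omega \, dt - \int_0^T (p_h, \pi_{k-1}\nabla \cdot \bm r)_\Omega \, dt = 0.
\]
To take $h\to 0$ I would use $\bm\Pi^{\mathrm{BDM}}\bm r \to \bm r$ and $\pi_{k-1}\nabla\cdot\bm r \to \nabla\cdot\bm r$ strongly, the weak limits $\bm u_h \rightharpoonup \check{\bm u}$ and $p_h\rightharpoonup\check p$ of \Cref{thm:43}, and the strong limit $c_h^- \to \check c$ of \Cref{thm:compconc}. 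For the nonlinear coefficient, passing to a further subsequence so that $c_h^- \to \check c$ a.e., the Carath\'eodory continuity and the uniform bound on $\bm K^{-1}$ give $\bm K^{-1}(c_h^-) \to \bm K^{-1}(\check c)$ in every $L^p$, $p<\infty$, by dominated convergence; hence $\bm K^{-1}(c_h^-)\bm\Pi^{\mathrm{BDM}}\bm r \to \bm K^{-1}(\check c)\bm r$ strongly in $L^2$, and pairing against the weakly convergent $\bm u_h$ passes to the limit. A density argument in $L^2(0,T;H_0(\mathrm{div};\Omega))$ then gives \eqref{eq:weak1} with $c$ replaced by $\check c$.

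The crux is the strong velocity convergence. Testing \eqref{eq:HMHDG_darcy_step_a}--\eqref{eq:HMHDG_darcy_step_c} with $(\bm u_h^i, p_h^i, -\widehat p_h^{\,i})$ as in \Cref{lem:stab_flow} and summing in time yields, with $A_h := \int_0^T (\bm K^{-1}(c_h^-)\bm u_h, \bm u_h)_\Omega\, dt$ and $B_h := \int_0^T \norm{\sigma_u^{1/2}(p_h-\widehat p_h)}_{L^2(\partial\mathcal{E}_h)}^2\, dt$, the energy identity
\[
A_h + B_h = \int_0^T (f_{I,\tau} - f_{P,\tau}, p_h)_\Omega \, dt.
\]
Since $f_{I,\tau}-f_{P,\tau}\to f_I - f_P$ strongly and $p_h\rightharpoonup\check p$, the right-hand side tends to $\int_0^T(f_I-f_P,\check p)_\Omega\,dt$, which by \eqref{eq:weak1} tested with $\bm r=\check{\bm u}$ and $\nabla\cdot\check{\bm u}=f_I-f_P$ equals $A_\infty := \int_0^T (\bm K^{-1}(\check c)\check{\bm u},\check{\bm u})_\Omega\,dt$. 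As $B_h\ge 0$, this forces $\limsup_h A_h \le A_\infty$. Expanding the nonnegative quantity $\int_0^T(\bm K^{-1}(c_h^-)(\bm u_h-\check{\bm u}),\bm u_h-\check{\bm u})_\Omega\,dt$ and noting that, by the strong convergence of $\bm K^{-1}(c_h^-)$ against the fixed $\check{\bm u}$ and the weak convergence of $\bm u_h$, both cross terms and the pure-limit term converge to $A_\infty$, I obtain
\[
\limsup_{h\to 0}\int_0^T(\bm K^{-1}(c_h^-)(\bm u_h-\check{\bm u}),\bm u_h-\check{\bm u})_\Omega\,dt = \limsup_{h\to 0} A_h - A_\infty \le 0.
\]
The coercivity $\bm\xi^T\bm K^{-1}\bm\xi \ge k_1^{-1}|\bm\xi|^2$ then yields $\bm u_h \to \check{\bm u}$ strongly in $L^2(0,T;L^2(\Omega)^d)$, and the first paragraph ($\bm U_h-\bm u_h\to 0$) upgrades this to strong convergence of $\bm U_h$ as well.

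The main obstacle I anticipate is this last step: the $c$-dependent weight $\bm K^{-1}(c_h^-)$ inside the energy blocks a direct appeal to weak lower semicontinuity of a \emph{fixed} quadratic form. The remedy is to measure the defect through the nonnegative form $\int_0^T(\bm K^{-1}(c_h^-)(\bm u_h-\check{\bm u}),\bm u_h-\check{\bm u})_\Omega\,dt$, exploiting that $\bm K^{-1}(c_h^-)$ converges strongly (so it pairs correctly with the fixed limit $\check{\bm u}$) while the sign of the HDG penalty $B_h$ supplies the one-sided bound $\limsup_h A_h \le A_\infty$. A secondary technical point is the a.e.\ convergence of $c_h^-$ needed for the dominated-convergence argument on $\bm K^{-1}(c_h^-)$, which requires extracting one further subsequence.
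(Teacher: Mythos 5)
Your proposal is correct, and its overall skeleton coincides with the paper's: pass to the limit in the discrete Darcy system by pairing the weak convergence of $(\bm u_h, p_h)$ with the a.e.\ (dominated-convergence) convergence of $\bm K^{-1}(c_h^-)$, then upgrade to strong velocity convergence through the energy identity and the already-established weak form. The technical realization of all three sub-steps, however, is genuinely different, and each variant is sound. For \eqref{eq:weak2} you exploit the reconstruction: $\nabla\cdot\bm U_h^i=\pi_k(f_I^i-f_P^i)$ from \eqref{eq:projdivu}, the exact boundary condition $\bm U_h\cdot\bm n=0$ forced by \eqref{eq:HMHDG_darcy_step_c} together with \eqref{eq:post_process_b}, and the identification $\check{\bm U}=\check{\bm u}$ via \eqref{eq:par_el_calc}; this yields $\check{\bm u}\in L^2(0,T;H_0(\mathrm{div};\Omega))$ and the divergence identity in one stroke, making explicit the $H_0(\mathrm{div};\Omega)$ membership that the paper leaves implicit (the paper instead subtracts \eqref{eq:HMHDG_darcy_step_c} from \eqref{eq:HMHDG_darcy_step_b} and tests the resulting HDG-gradient identity with $(\pi_k\varphi,\widehat\pi_k\varphi)$). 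For \eqref{eq:weak1} you test the momentum equation with $\bm\Pi^{\mathrm{BDM}}\bm r$, using $H(\mathrm{div})$-conformity of the projection to cancel the facet term and \eqref{eq:commutativity} for the divergence, whereas the paper uses the reformulation \eqref{eq:flowwithG} tested with $\bm\pi_k\bm\varphi$ and the weak convergence $\bm G_h(p_h,\widehat p_h)\rightharpoonup\nabla\check p$ of \Cref{thm:43}; your route trades the compactness-derived gradient limit for the commutativity and conformity of the BDM projection. For the strong convergence, both proofs start from the same energy identity and the same identification of the limit of $\int_0^T(f_I-f_P,p_h)_\Omega\,\mathrm{d}t$ with $\int_0^T(\bm K^{-1}(\check c)\check{\bm u},\check{\bm u})_\Omega\,\mathrm{d}t$; you then expand the nonnegative defect $\int_0^T(\bm K^{-1}(c_h^-)(\bm u_h-\check{\bm u}),\bm u_h-\check{\bm u})_\Omega\,\mathrm{d}t$ and invoke coercivity, while the paper applies weak lower semicontinuity of the weighted norm, deduces first that the penalty term vanishes, and recovers $\bm u_h$ from $\bm K^{-1/2}(c_h^-)\bm u_h$. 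Your defect expansion is slightly more elementary, since it replaces lower semicontinuity of a variable-weight quadratic form by weak--strong convergence of the cross terms (justified by the symmetry and uniform boundedness of $\bm K^{-1}$), at the price of not producing the by-product $\lim_{h,\tau\to 0}\int_0^T\langle\sigma_u(p_h-\widehat p_h),p_h-\widehat p_h\rangle_{\partial\mathcal{E}_h}\,\mathrm{d}t=0$, which the statement does not require. The two caveats you flag -- extraction of a further subsequence for a.e.\ convergence of $c_h^-$, and the identification of the piecewise-constant-in-time interpolants of $f_I,f_P$ with $f_I,f_P$ themselves -- are present in the paper's own proof as well, so neither constitutes a gap relative to it.
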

\begin{proof}

By Theorem~\ref{thm:compconc}, $c_h^{-}$ converges strongly to $\check{c}$ in $L^2(0,T;L^2(\Omega))$. 
Since $\bm K^{-1}$ is Carath\'{e}odory, passing to a subsequence we have $\bm K^{-1}(c_h^-(x,t)) \to \bm K^{-1}(\check{c}(x,t))$ for a.e. $(x,t) \in \Omega\times(0,T)$ as $h,\tau \to 0$.
Thus, the dominated convergence theorem ensures 
\begin{align*}
\lim_{h,\tau \to 0} \int_0^T \norm[1]{  \bm K^{-1}(c_h^-)}_{L^2(\Omega)}^2 \dif t 
&= \int_0^T \norm[1]{  \bm K^{-1}(\check{c})}_{L^2(\Omega)}^2 \dif t.
\end{align*}
Let $\bm \varphi \in C^\infty(0,T; C_0^\infty(\overline{\Omega})^d)$ be arbitrary.  Taking $\bm v_h = \bm \pi_k \bm \varphi$, using
  \eqref{eq:flowwithG}  and the weak convergence of $\bm G_h(p_h,\widehat{p}_h)$
to $\nabla p$ (see Theorem~\ref{thm:43}):
\begin{align*}
-\int_0^T (\bm K^{-1}(\check{c}) \check{\bm u}, \bm \varphi)_{\Omega}\dif t &= -\lim_{h,\tau \to 0} \int_0^T (\bm K^{-1}(c_h^-) \bm u_h, \bm \pi_k \bm \varphi)_{\mathcal{E}_h}\dif t 
\\
&= -\int_0^T (\check{p}, \nabla \cdot \bm \varphi)_\Omega \dif t.
\end{align*}
Subtracting \eqref{eq:HMHDG_darcy_step_c} from \eqref{eq:HMHDG_darcy_step_b} and using \eqref{eq:disc_grad} yields for any $(s_h,\widehat{s}_h)\in Q_h \times M_h$:
\begin{equation}\label{eq:convUhint1}
- (\bm u_h^i, \bm G_h(s_h,\widehat{s}_h))_{\mathcal{E}_h}
+ \langle \sigma_u (p_h^i - \widehat{p}_h^{\, i}), s_h -\widehat{s}_h \rangle_{\partial\mathcal{E}_h}
= (f_I^i-f_P^i, s_h)_{\mathcal{E}_h}.
\end{equation}
Let $\varphi \in C^\infty([0,T] \times \overline{\Omega})$ be arbitrary. For any $t_{i-1} \leq t \leq t_i$, choose $(s_h,\widehat{s}_h) = (\pi_k \varphi(t), \widehat \pi_k \varphi(t))$ in \eqref{eq:convUhint1}  and integrate over $(0,T)$:
\begin{multline}\label{eq:convUhint2}
-\int_0^T (\bm u_h, \bm G_h(\pi_k \varphi, \widehat \pi_k \varphi))_{\mathcal{E}_h}  \dif t
+ \int_0^T\langle 
	\sigma_{u}(p_h - \widehat{p}_h ), \pi_k \varphi -\widehat \pi_k \varphi\rangle_{\partial \mathcal{E}_h} \dif t \\ 
 = \int_0^T (f_I - f_P,\pi_k \varphi)_{\mathcal{E}_h} \dif t.
\end{multline}
Since $\pi_k \varphi \to \varphi$ in $L^\infty(0,T;L^2(\Omega))$,  
\[
\lim_{h,\tau \rightarrow 0} \int_0^T (f_I - f_P,\pi_k \varphi)_{\mathcal{E}_h} \dif t
 = \int_0^T (f_I - f_P, \varphi)_{\mathcal{E}_h} \dif t
\]
We now evaluate the first term of \eqref{eq:convUhint2}. From the definition of the $L^2$ projections $\pi_k$ and $\widehat\pi_k$, 
\begin{align*}
 (\bm G_h(   \pi_k \varphi, \widehat \pi_k \varphi), \bm v_h)_{\mathcal{E}_h}
 = & (\nabla \varphi, \bm v_h)_{\mathcal{E}_h}, \quad \forall \bm v_h \in \bm V_h,
\end{align*}
i.e., $\bm G_h(   \pi_k \varphi, \widehat \pi_k \varphi) =\pi_k\nabla \varphi$. This implies that $\bm G_h(   \pi_k \varphi, \widehat \pi_k \varphi)\to \nabla \varphi$ in $L^\infty(0,T;L^2(\Omega)^d)$ Since $\bm u_h\rightharpoonup^\star \check{\bm u}$ in 
$L^\infty(0,T;L^2(\Omega)^d)$, 
\[
\lim_{h,\tau \rightarrow 0} -\int_0^T (\bm u_h, \bm G_h(\pi_k \varphi, \widehat \pi_k \varphi))_{\mathcal{E}_h}  \dif t
= -\int_0^T (\check{\bm u}, \nabla \varphi)_{\mathcal{E}_h}  \dif t.
\]
Next, by \eqref{eq:flow_linfl2_bounds} and the approximation properties of the $L^2$ projections $\pi_k$ and $\widehat \pi_k$,
\begin{align*}
\int_0^T\langle 
	\sigma_{u}(p_h - \widehat{p}_h ), \pi_k \varphi -\widehat \pi_k \varphi\rangle_{\partial \mathcal{E}_h} \dif t 
 \lesssim  h^{1/2} \Vert \varphi \Vert_{H^1(\Omega)},
\end{align*}
To conclude, passing to the limit in \eqref{eq:convUhint2} yields
\[
-\int_0^T (\check{\bm u}, \nabla \varphi)_{\mathcal{E}_h}  \dif t = \int_0^T (f_I - f_P, \varphi)_{\mathcal{E}_h} \dif t.
\]
By restricting $\varphi$ to have compact support in $\Omega$, we obtain that the limit $\check{\bm u}$ satisfies \eqref{eq:weak2}. 

What remains is to show $\bm u_h \to \check{\bm u}$ in $L^2(0,T;L^2(\Omega)^d)$. Testing with $\bm v_h = \bm u_h$, $w_h = p_h$, $\widehat w_h = \widehat p_h$
\[
		\int_0^T \del{(\bm{K}^{-1} (c_h^{-}) \bm{u}_h, \bm u_h )_{\mathcal{E}_h} +  \langle \sigma_u( p_h -  \widehat{p}_h ),p_h -  \widehat{p}_h
\rangle_{\partial \mathcal{E}_h}} \dif t = ( f^I - f^P, p_h )_{\mathcal{E}_h} 
\]
so passing to the limit we find
\begin{align*}
	\lim_{h,\tau \to 0} &	\int_0^T \big((\bm{K}^{-1} (c_h^{-}) \bm{u}_h, \bm u_h )_{\mathcal{E}_h}  
    +  \langle \sigma_u( p_h -  \widehat{p}_h ),p_h -  \widehat{p}_h
		\rangle_{\partial \mathcal{E}_h}\big) \dif t \\&= \int_0^T( f^I - f^P, \check{p} )_{\Omega} \dif t 
= \int_0^T (\nabla \cdot \check{\bm u}, p)_\Omega \dif t      
		= 	\int_0^T (\bm K^{-1}(\check{c}) \check{\bm u}, \check{\bm u})_{\Omega} \dif t.
\end{align*}
Weak lower semi-continuity of norms then yields 
\begin{align*}
	\int_0^T &(\bm K^{-1}(\check{c}) \check{\bm u}, \check{\bm u})_{\Omega} \dif t \\
    	&\le \liminf_{h,\tau \to 0} 	\int_0^T \big((\bm{K}^{-1} (c_h) \bm{u}_h, \bm u_h )_{\mathcal{E}_h} \dif t  \\
	&= \liminf_{h,\tau \to 0} 	\int_0^T \big((\bm{K}^{-1} (c_h) \bm{u}_h, \bm u_h )_{\mathcal{E}_h} \dif t 
    + \limsup_{h,\tau \to 0} \int_0^T \langle \sigma_u( p_h -  \widehat{p}_h ),p_h -  \widehat{p}_h
	\rangle_{\partial \mathcal{E}_h}\dif t 
     \\
    & \quad 
    - \limsup_{h,\tau \to 0} \int_0^T \langle \sigma_u( p_h -  \widehat{p}_h ),p_h -  \widehat{p}_h
	\rangle_{\partial \mathcal{E}_h} \dif t \\
	&\le \limsup_{h,\tau \to 0} \int_0^T \big((\bm{K}^{-1} (c_h) \bm{u}_h, \bm u_h )_{\mathcal{E}_h} + \langle \sigma_u( p_h -  \widehat{p}_h ),p_h -  \widehat{p}_h
	\rangle_{\partial \mathcal{E}_h}\big) \dif t \\
    & \quad\quad - \limsup_{h,\tau \to 0} \int_0^T \langle \sigma_u( p_h -  \widehat{p}_h ),p_h -  \widehat{p}_h
	\rangle_{\partial \mathcal{E}_h}\dif t \\
&\le \int_0^T (\bm K^{-1}(\check{c}) \check{\bm u}, \check{\bm u})_{\Omega} \dif t	- \limsup_{h,\tau \to 0} \int_0^T \langle \sigma_u( p_h -  \widehat{p}_h ),p_h -  \widehat{p}_h
	\rangle_{\partial \mathcal{E}_h} \dif t.
\end{align*}
whence we conclude 
\[
\lim_{h,\tau \to 0} \int_0^T \langle \sigma_u( p_h -  \widehat{p}_h ),p_h -  \widehat{p}_h
\rangle_{\partial \mathcal{E}_h} \dif t = 0.
\]
Consequently,
\begin{align*}
	\lim_{h,\tau \to 0} 	\int_0^T \big((\bm{K}^{-1} (c_h) \bm{u}_h, \bm u_h )_{\mathcal{E}_h}  
	&= 	\int_0^T (\bm K^{-1}(\check{c}) \check{\bm u}, \check{\bm u})_{\Omega} \dif t,
\end{align*}
which yields strong convergence of $\bm{K}^{-1/2} (c_h) \bm{u}_h$ to $\bm K^{-1/2}(\check{c}) \check{\bm u}$ in $L^2(0,T;L^2(\Omega)^d)$, and hence strong convergence of $\bm u_h$ to $\check{\bm u}$ in $L^2(0,T;L^2(\Omega)^d)$. Strong convergence of the $H(\rm div)$-reconstructed velocity $\bm U_h$ to the velocity $\check{\bm u}$ in $L^2(0,T;L^2(\Omega))$ then follows from \eqref{eq:par_el_calc}. 

\end{proof}

\subsection{Passing to the limit in the transport problem}

\begin{theorem}\label{thm:convconclim}
The function $\check{c}$ defined in Theorem~\ref{thm:compconc}
satisfies \eqref{eq:weak3}, with $\bm u$ replaced by $\check{\bm u}$.
\end{theorem}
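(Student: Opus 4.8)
The plan is to fix a test function $w \in C^\infty([0,T]\times\overline\Omega)$, which is dense in the test space of \eqref{eq:weak3}, to build compatible discrete test functions, to pass to the limit term by term in a convenient reformulation of \crefrange{eq:discrete_transport_step_c}{eq:discrete_transport_step_d}, and to conclude by density. For the discrete test functions I would take $w_h = \Pi^{\rm SZ} w \in W_h$, which is continuous and hence carries a single-valued trace $\widehat w_h = w_h|_{\partial\mathcal{E}_h} \in M_h$; the point of this choice is that $w_h - \widehat w_h \equiv 0$. Starting from the integrated-by-parts identity derived in the proof of \Cref{lem:time_der_bnd}, which holds for arbitrary $(w_h,\widehat w_h) \in W_h \times M_h$, this choice annihilates every facet contribution, including all numerical-flux terms, and leaves the purely volumetric identity
\begin{align*}
(\phi \delta_\tau c_h^i, w_h)_{\mathcal{E}_h} = {} & (\bm q_h^i, \nabla w_h)_{\mathcal{E}_h} + (\bm U_h^i c_h^i, \nabla w_h)_{\mathcal{E}_h} + \tfrac12((\nabla\cdot\bm U_h^i) c_h^i, w_h)_{\mathcal{E}_h} \\
& - \tfrac12((f_I^i+f_P^i)c_h^i, w_h)_{\mathcal{E}_h} + (f_I^i\overline{c}^i, w_h)_{\mathcal{E}_h}.
\end{align*}
I would then integrate this identity in time against the piecewise-constant interpolants and let $h,\tau\to 0$.

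The main obstacle is identifying the weak limit $\check{\bm q}$ of the diffusive flux as $-\bm D(\check{\bm u})\nabla\check c$, which is delicate precisely because $\bm D$ has the unbounded growth $\bm D(\bm U_h)\sim|\bm U_h|$ in \eqref{eq:diffdisp} while $\bm\theta_h = -\bm G_h(c_h,\widehat c_h)$ is controlled only weakly. To carry this out I would combine \eqref{eq:discrete_transport_step_a} with the identity $\bm\theta_h = -\bm G_h(c_h,\widehat c_h)$ established in the proof of \Cref{lem:stab_transport} to write $\bm q_h = -\bm D(\bm U_h)\bm G_h(c_h,\widehat c_h)$, test against an arbitrary $\bm\psi \in C_c^\infty((0,T)\times\Omega)^d$, and use the symmetry of $\bm D$ to move it onto $\bm\psi$, so that $\int_0^T(\bm q_h,\bm\psi)_\Omega\,\dif t = -\int_0^T(\bm G_h(c_h,\widehat c_h), \bm D(\bm U_h)\bm\psi)_\Omega\,\dif t$. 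Since $\bm U_h \to \check{\bm u}$ strongly in $L^2(0,T;L^2(\Omega)^d)$ by \Cref{thm:convHDGflow}, a subsequence converges a.e.; continuity of $\bm D$, the linear growth bound $|\bm D(\bm U_h)\bm\psi| \lesssim (1+|\bm U_h|)|\bm\psi|$, and the equi-integrability of $(1+|\bm U_h|)^2$ afforded by $L^2$-strong convergence yield, via the generalized dominated convergence (Vitali) theorem, that $\bm D(\bm U_h)\bm\psi \to \bm D(\check{\bm u})\bm\psi$ strongly in $L^2(0,T;L^2(\Omega)^d)$. Pairing this with the weak convergence $\bm G_h(c_h,\widehat c_h)\rightharpoonup\nabla\check c$ from \Cref{thm:compconc} gives the limit $-\int_0^T(\nabla\check c, \bm D(\check{\bm u})\bm\psi)_\Omega\,\dif t$; comparing with $\bm q_h \rightharpoonup \check{\bm q}$ in $L^2(0,T;L^{2d/(2d-1)}(\Omega)^d)$, which is legitimate since $\bm\psi$ is bounded and hence lies in the dual $L^{2d}$, identifies $\check{\bm q} = -\bm D(\check{\bm u})\nabla\check c$.

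With the flux in hand the remaining passages are routine weak--strong arguments. The diffusive term $\int_0^T(\bm q_h,\nabla w_h)_{\mathcal{E}_h}\,\dif t$ converges to $\int_0^T(\check{\bm q},\nabla w)_\Omega\,\dif t = -\int_0^T(\bm D(\check{\bm u})\nabla\check c,\nabla w)_\Omega\,\dif t$ by pairing the weak convergence of $\bm q_h$ in $L^2(0,T;L^{2d/(2d-1)})$ against $\nabla w_h \to \nabla w$ strongly in $L^{2d}$. The convection term converges to $\int_0^T(\check{\bm u}\check c,\nabla w)_\Omega\,\dif t$ because $\bm U_h \to \check{\bm u}$ and $c_h \to \check c$ are both strong in $L^2(0,T;L^2)$, so $\bm U_h c_h \to \check{\bm u}\check c$ in $L^1$, while $\nabla w_h$ is uniformly bounded (Scott--Zhang $W^{1,\infty}$-stability) and converges a.e. The three source and reaction integrals combine, using the discrete divergence relation $\nabla\cdot\bm U_h^i = \pi_k(f_I^i - f_P^i)$ from \eqref{eq:projdivu} together with the strong convergence of $c_h$ and $f_I,f_P \in L^\infty(0,T;L^2(\Omega))$, into the zeroth-order source and reaction contributions of \eqref{eq:weak3}. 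Finally, for the time-derivative term I would split $(\phi\partial_t\tilde c_h, w_h)_{\mathcal{E}_h} = (\phi\partial_t\tilde c_h, w)_{\mathcal{E}_h} + (\phi\partial_t\tilde c_h, w_h - w)_{\mathcal{E}_h}$: the uniform bound of \Cref{lem:time_der_bnd} together with $\|w_h - w\|_{L^2(0,T;W^{1,2d}(\Omega))}\to 0$ controls the second term, while a discrete integration by parts in time combined with the strong convergence $\tilde c_h \to \check c$ identifies the first limit as $\int_0^T\langle\phi\partial_t\check c, w\rangle\,\dif t$. Assembling these limits and invoking the density of $C^\infty([0,T]\times\overline\Omega)$ in the test space then completes the argument.
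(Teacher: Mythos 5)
Your proposal is correct and follows essentially the same route as the paper: both exploit a continuous Scott--Zhang test function (single-valued trace) to annihilate all facet and numerical-flux terms and reach the identical volumetric identity, both identify $\check{\bm q} = -\bm D(\check{\bm u})\nabla \check{c}$ by pairing the weak $L^2$ convergence of $\bm G_h(c_h,\widehat c_h)$ against strong convergence of $\bm D(\bm U_h)$ applied to a smooth field, and both finish with term-by-term weak--strong limit passages using \eqref{eq:projdivu} for the divergence term. The only differences are technical cosmetics: the paper gets strong $L^2$ convergence of $\bm D(\bm U_h)$ directly from the Lipschitz continuity of $\bm D$ (assumption (v)) rather than your Vitali/equi-integrability argument, and it treats the time-derivative term simply by pairing the weak convergence $\partial_t \tilde c_h \rightharpoonup \partial_t \check{c}$ (already available from Theorem~\ref{thm:compconc}) with the strong convergence of the interpolant, which avoids the endpoint terms at $t=0,T$ that your discrete integration by parts in time would introduce and leave unaddressed.
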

\begin{proof}
To facilitate passage to the limit, we rewrite the HDG scheme using the HDG gradient.
With \eqref{eq:firsteq}, we immediately have 
\[
\bm \theta_h^i = - \bm G_h(c_h^i, \widehat c_h^{\, i}).
\]
Therefore, we rewrite \eqref{eq:discrete_transport_step_a} as
\begin{equation}
\label{eq:lim_pass_a}
		-(\bm D(\bm{U}_h^i) \bm G_h(c_h^i, \widehat c_h^{\,i}), \bm{z}_h)_{\mathcal{E}_h} - (\bm q_h^i, \bm z_h)_{\mathcal{E}_h}= 0.
        \end{equation}

Let $\varphi \in C^\infty(0,T;C^\infty(\Omega))$ with $\varphi(T) = 0$ and
let $\Pi_{h,\tau}^{\rm SZ}\varphi$ be an interpolant of $\varphi$ of the Scott-Zhang type, 
continuous in time and in space, satisfying 
\[
\Pi_{h,\tau}^{\rm SZ} \varphi  \mbox{ converges strongly to } \varphi \text{ in } L^\infty(0,T;W^{1,\infty}(\Omega)).
\]
Similarly, let $\bm z \in C^\infty(0,T;C^\infty(\Omega)^d)$, and 
let $\bm \pi_{k}\bm z$ be continuous in time and the $L^2$ projection of $\bm z(t)$  in $\bm V_h$ for all $r$: 
\[
\bm \pi_{k} \bm z \mbox{ converges strongly to } \bm z  \text{ in } L^\infty(0,T;W^{1,\infty}(\Omega)^d).
\]
Now, test \eqref{eq:lim_pass_a} with $\bm z_h = \bm \pi_{k} \bm z(t)$ and integrate over $(0,T)$:
\[
	-\int_0^T(\bm D(\bm{U}_h) \bm G_h(c_h, \widehat c_h),\bm \pi_k \bm z)_{\mathcal{E}_h} \dif t = \int_0^T (\bm q_h, \bm \pi_k \bm z)_{\mathcal{E}_h} \dif t.
\]
The weak convergence of $\bm q_h$ to $\check{\bm q}$ in $L^2(0,T;L^{2d/(2d-1)}(\Omega)^d)$ (see 
Theorem~\ref{thm:compconc}) and the  strong convergence of  $\bm \pi_k \bm z$ to  $\bm z$ in $L^2(0,T;L^{2d}(\Omega)^d)$ yield
\[
\lim_{h,\tau \rightarrow 0} \int_0^T (\bm q_h, \bm \pi_k \bm z)_{\mathcal{E}_h} \dif t
= \int_0^T (\check{\bm q}, \bm z)_{\mathcal{E}_h} \dif t.
\]
Since $\bm U_h$ converges to $\check{\bm u}$ strongly in $L^2(0,T;L^2(\Omega)^d)$ (from Theorem~\ref{thm:convHDGflow}), 
and since $\bm D(\cdot)$ is Lipschitz, $\bm D(\bm U_h)$ converges strongly
to $\bm D(\check{\bm u})$ in $L^2(0,T;L^2(\Omega)^{d \times d})$. The weak convergence of $\bm G_h(c_h ,\widehat c_h)$ to $\check{c}$ in $L^2(0,T;L^2(\Omega)^d)$ (see Theorem~\ref{thm:compconc}) and the strong convergence of $\bm \pi_k \bm z$ to $\bm z$ imply
\begin{align*}
- \lim_{h,\tau \to 0} \int_0^T(\bm D(\bm{U}_h) \bm G_h(c_h, \widehat c_h),\bm \pi_k \bm z)_{\mathcal{E}_h} \dif t
= & - \int_0^T(\bm D(\check{\bm{u}})\nabla \check{c}, \bm z)_{\mathcal{E}_h} \dif t.
\end{align*}
Thus we have obtained
\[
- \int_0^T(\bm D(\check{\bm{u}})\nabla \check{c}, \bm z)_{\Omega} \dif t = \int_0^T (\check{\bm q}, \bm z)_{\mathcal{E}_h} \dif t.
\]
\Bk

Next, we test \eqref{eq:discrete_transport_step_c} and \eqref{eq:discrete_transport_step_d} with $(w_h, \widehat{w}_h) = (\Pi_{h,\tau}^{\rm SZ} \varphi, -(\Pi_{h,\tau}^{\rm{SZ}} \varphi)|_{\partial \mathcal{E}_h})$, sum the resulting equations, integrate by parts in space, integrate over $(t_{i-1},t_i)$ and sum over $i$  to find: 
\begin{align*}
	\int_0^T (\phi \partial_t \tilde{c}_h, \Pi_{h,\tau}^{\rm SZ} \varphi)_{\mathcal{E}_h} \dif t -
	\int_0^T ( \bm{q}_h, \nabla \Pi_{h,\tau}^{\rm SZ} \varphi)_{\mathcal{E}_h} \dif t -  \int_0^T (\bm{U}_h c_h, \nabla \Pi_{h,\tau}^{\rm SZ} \varphi)_{\mathcal{E}_h}\dif t \\ - \frac{1}{2}\int_0^T((\nabla \cdot \bm U_h)c_h, \Pi_{h,\tau}^{\rm SZ} \varphi)_{\mathcal{E}_h} \dif t    + \frac{1}{2} \int_0^T ((f_I + f_P)c_h, \Pi_{h,\tau}^{\rm SZ} \varphi)_{\mathcal{E}_h} \dif t \\
 = \int_0^T (f_I 
	\overline{c},\Pi_{h,\tau}^{\rm SZ} \varphi)_{\mathcal{E}_h} \dif t.
\end{align*}
Since $\Pi_{h,\tau}^{\rm SZ} \varphi$ belongs to $L^2(0,T; W^{1,\infty}(\Omega))$ and we have a uniform bound on discrete time derivative in $L^2(0,T;W^{1,2d}(\Omega)^\star)$, up to a subsequence
\begin{align*}
\lim_{h,\tau \to 0}\int_0^T (\phi \partial_t \tilde{c}_h, \Pi_{h,\tau}^{\rm SZ} \varphi)_{\mathcal{E}_h} \dif t &= \lim_{h,\tau \to 0}\int_0^T \langle \phi \partial_t \tilde{c}_h, \Pi_{h,\tau}^{\rm SZ} \varphi \rangle_{W^{1,2d}(\Omega)^\star, W^{1,2d}(\Omega)} \dif t
\\
& 
= \int_0^T \langle \phi \partial_t \check{c}, \varphi \rangle_{W^{1,2d}(\Omega)^\star, W^{1,2d}(\Omega)} \dif t,
\end{align*}
where the second equality follows from weak-strong convergence.
We also clearly have by weak-strong convergence, or simply by strong convergence:
\begin{align*}
\lim_{h,\tau\to 0} -\int_0^T ( \bm{q}_h, \nabla \Pi_{h,\tau}^{\rm SZ} \varphi)_{\mathcal{E}_h} \dif t
= -\int_0^T ( \check{\bm{q}}, \nabla \varphi)_{\mathcal{E}_h} \dif t
= \int_0^T(\bm D(\bm{u})\nabla \check{c}, \nabla \varphi)_{\Omega} \dif t
\\
\lim_{h,\tau\to 0} \frac{1}{2} \int_0^T ((f_I + f_P)c_h, \Pi_{h,\tau}^{\rm SZ} \varphi)_{\mathcal{E}_h} \dif t =
\frac{1}{2} \int_0^T ((f_I + f_P) \check{c}, \varphi)_{\mathcal{E}_h} \dif t\\
\lim_{h,\tau\to 0}\int_0^T (f_I \overline{c},\Pi_{h,\tau}^{\rm SZ} w)_{\mathcal{E}_h} \dif t
 = \int_0^T (f_I \overline{c},\varphi)_{\mathcal{E}_h} \dif t
\end{align*}
We also have
\[
\lim_{h,\tau\to 0} \int_0^T (\bm{U}_h c_h, \nabla \Pi_{h,\tau}^{\rm SZ} \varphi)_{\mathcal{E}_h}\dif t  = 
\int_0^T (\check{\bm{u}} \check{c}, \nabla \varphi)_{\Omega}\dif t, 
\]
and with \eqref{eq:projdivu}, we have
\[
\lim_{h,\tau\to 0} \frac{1}{2}\int_0^T((\nabla \cdot \bm U_h)c_h, \Pi_{h,\tau}^{\rm SZ} \varphi)_{\mathcal{E}_h} \dif t 
= \frac12 \int_0^T ((f_I-f_P) \check{c}, \varphi)_\Omega \dif t.
\]
Therefore the limit $\check{c}$ satisfies
 \begin{align*}
 \int_0^T \langle \phi\partial_t \check{c},  \varphi\rangle_{W^{1,2d}(\Omega)^*,W^{1,2d}(\Omega)}
 +\int_0^T (\bm D(\check{\bm u}) \nabla \check{c}, \varphi)_\Omega
 +\frac12 \int_0^T ((f_I+f_P) \check{c}, \varphi)_\Omega\\
 -\int_0^T (\check{\bm u} \check{c}, \nabla\varphi)_\Omega
 -\frac12 \int_0^T ((f_I-f_P) \check{c}, \varphi)_\Omega
 = \int_0^T (f_I \overline{c}, \varphi)_\Omega.
 \end{align*}
 which is equivalent to \eqref{eq:weak3} with $\bm u$ replaced by $\check{\bm u}$.
\end{proof}

\section{Numerical experiments}
In this section, we conduct a number of numerical experiments to support our theoretical findings. In the first experiment, we consider a simple test case with a (known) smooth solution on the unit square to test rates of convergence with and without the H(div) conforming reconstruction \crefrange{eq:post_process_a}{eq:post_process_b}. In the second experiment, we compare the performance both with and without the H(div) reconstruction on a problem with a low-regularity solution due to a sharp discontinuity in permeability coinciding with a re-entrant corner.

Our experiments suggest that for smooth solutions, the HDG method performs comparably with and without the H(div) conforming reconstruction, in agreement with the results reported in \cite{Fabien:2020a}. However, we observed that for low regularity solutions, stability becomes an issue without the reconstruction. We remark that our proof of unconditional stability of the transport problem (\Cref{lem:stab_transport}) hinges on H(div) conformity.

\subsection{Manufactured solution}

In this first numerical experiment, we investigate the convergence of our HDG scheme both with and without the H(div) conforming reconstruction on a problem with a known smooth solution.
We take $\Omega = (0,1)^2$, and select the problem data such that the exact pressure and concentration solutions are, respectively, 
\begin{align*}
 \quad p(x,y) &= -\cos(\pi x)\cos(\pi y), \\
 c(x,y,t) &= \tfrac{1}{2}\sin(\tfrac{\pi t}{2})( \sin^2(2\pi x) + \cos^2(2\pi y)).
\end{align*}
We compute the exact velocity $\bm u$ using \eqref{eq:Darcy_prob_a} and the auxiliary variables $\bm \theta$ and $\bm q$ using \eqref{eq:transport_prob_rewritten_a} and \eqref{eq:transport_prob_rewritten_b}. As for the problem parameters, we fix the permeability $\kappa=1$, the porosity  
$\phi = 0.2$, the molecular diffusion coefficient $d_0 = 1$, the transverse dispersion coefficient $\alpha_t = 1.8 \times 10^{-6}$, and the longitudinal dispersion coefficient $\alpha_l = 1.8 \times 10^{-5}$. We assume a quarter power mixing law with
 $\mu(c) = (c \mu_s^{-0.25} + (1-c) \mu_o^{-0.25})^{-4}$, and mobility ratio $\mu_o/\mu_s= 2$. 

To test the spatial rate of convergence for smooth solutions, we compute the solution on a sequence of uniformly refined meshes starting with an initial mesh of size $h = \sqrt{2}$. We vary the polynomial degree $k\in\{0,1,2\}$. To ensure that the temporal error does not dominate the spatial error, we set $\tau = \min(0.01,h^{k+1})$. The final time is $T=0.1$. In \Cref{fig:conv_rates_flow}, we plot the $L^2$ norm of the error in the reconstructed velocity, velocity, and pressure as a function of the refinement level. \Cref{fig:conv_rates_transport} displays a similar plot for the concentration and flux. We observe the rate of convergence in the $L^2$ norm for all variables is $k+1$, in agreement with the results of \cite{Fabien:2020a}.

\begin{figure}[H]
\centering
\includegraphics[width=0.7\textwidth]{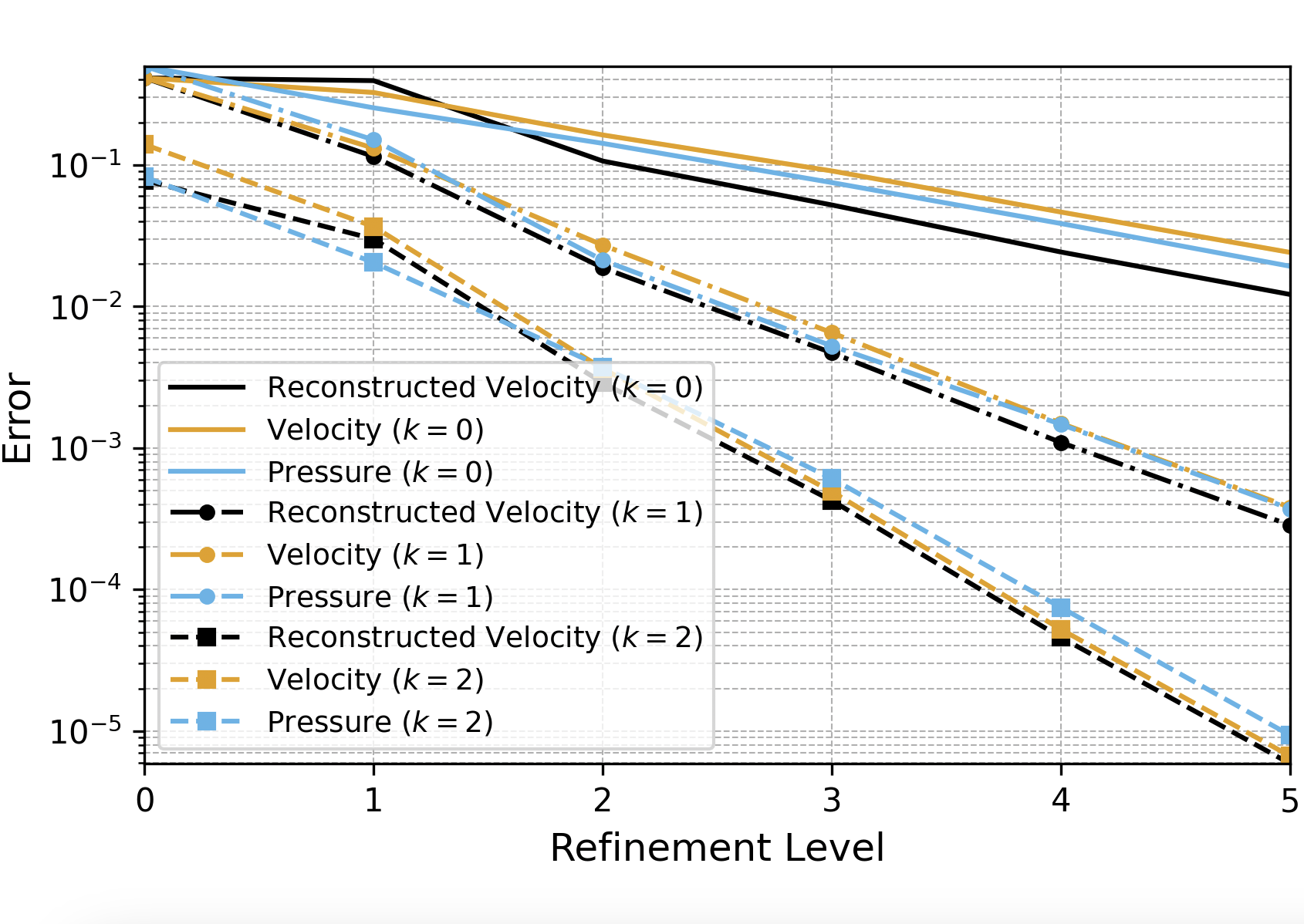}
    \caption{$L^2$ norm of the error as a function of refinement level, for different polynomial degree $k$, for the velocity $\bm u_h$, the reconstructed velocity $\bm U_h$ and the pressure $p_h$ at the final time.}
    \label{fig:conv_rates_flow}
\end{figure}

\begin{figure}[H]
\centering
\includegraphics[width=0.7\textwidth]{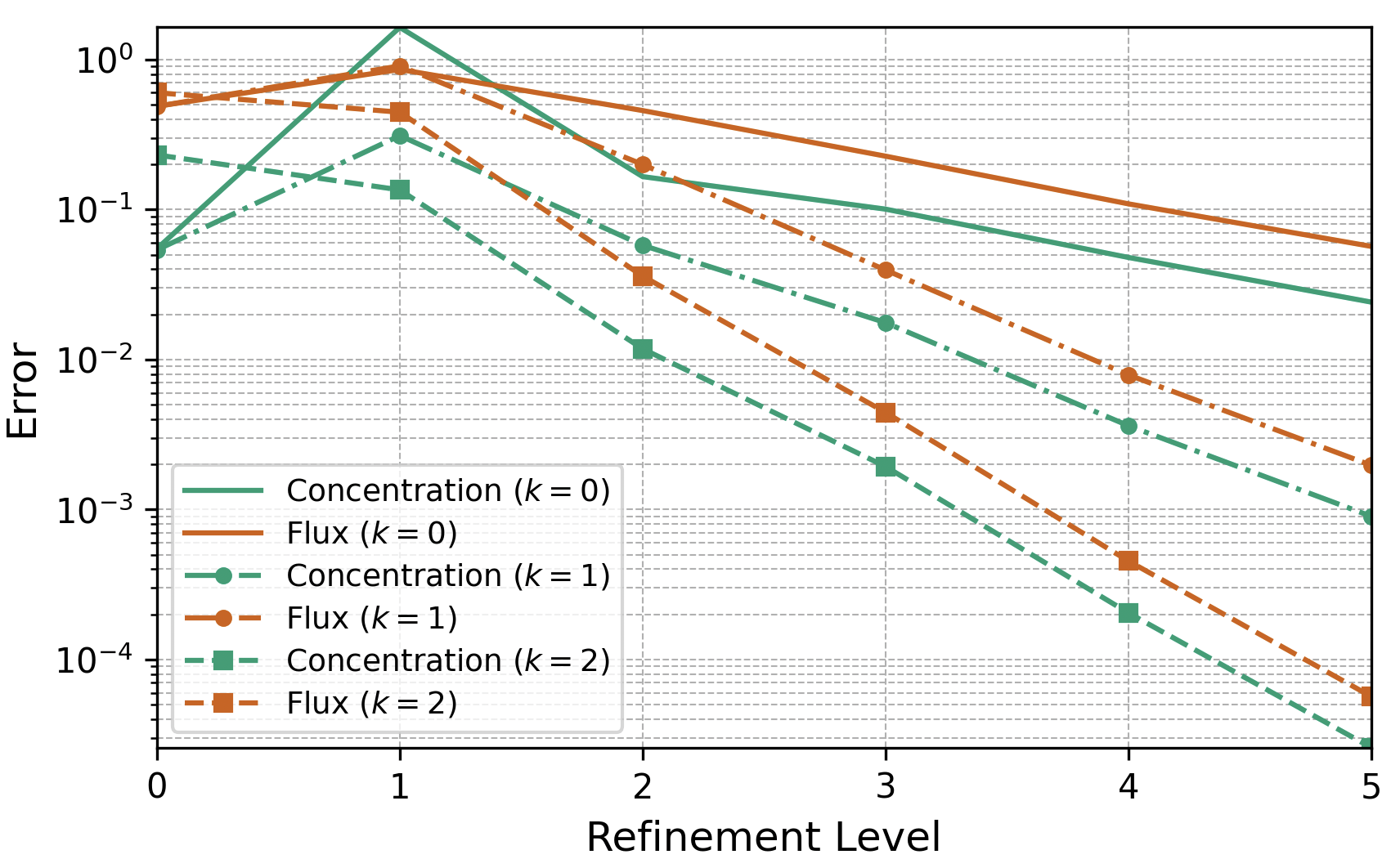}
    \caption{$L^2$ norm of the error as a function of refinement level, for different polynomial degree $k$, for the concentration $c_h$ and the flux $\bm q_h$ at the final time.}
    \label{fig:conv_rates_transport}
\end{figure}

\subsection{Singular velocity}
In the previous experiment, we observed that the HDG method performs 
We now test the performance of the proposed HDG method on a problem with low regularity found in \cite{Bartels:2009}. Consider the following $L$-shaped domain, partitioned into two porous regions $\Omega = \Omega_1 \cup \Omega_2$ with $\Omega_2 = \sbr{0,1/2}\times \sbr{1/2,1}$ and  $\Omega_1 = \Omega \setminus \Omega_2$ as illustrated in \Cref{fig:Lshape}. 
\begin{figure}[H]
    \centering
\tikzset{
pattern size/.store in=\mcSize, 
pattern size = 5pt,
pattern thickness/.store in=\mcThickness, 
pattern thickness = 0.3pt,
pattern radius/.store in=\mcRadius, 
pattern radius = 1pt}
\makeatletter
\pgfutil@ifundefined{pgf@pattern@name@_foblkcch2}{
\pgfdeclarepatternformonly[\mcThickness,\mcSize]{_foblkcch2}
{\pgfqpoint{0pt}{0pt}}
{\pgfpoint{\mcSize}{\mcSize}}
{\pgfpoint{\mcSize}{\mcSize}}
{
\pgfsetcolor{\tikz@pattern@color}
\pgfsetlinewidth{\mcThickness}
\pgfpathmoveto{\pgfqpoint{0pt}{\mcSize}}
\pgfpathlineto{\pgfpoint{\mcSize+\mcThickness}{-\mcThickness}}
\pgfpathmoveto{\pgfqpoint{0pt}{0pt}}
\pgfpathlineto{\pgfpoint{\mcSize+\mcThickness}{\mcSize+\mcThickness}}
\pgfusepath{stroke}
}}
\makeatother

\tikzset{
pattern size/.store in=\mcSize, 
pattern size = 5pt,
pattern thickness/.store in=\mcThickness, 
pattern thickness = 0.3pt,
pattern radius/.store in=\mcRadius, 
pattern radius = 1pt}\makeatletter
\pgfutil@ifundefined{pgf@pattern@name@_3oiuk3tp3}{
\pgfdeclarepatternformonly[\mcThickness,\mcSize]{_3oiuk3tp3}
{\pgfqpoint{-\mcThickness}{-\mcThickness}}
{\pgfpoint{\mcSize}{\mcSize}}
{\pgfpoint{\mcSize}{\mcSize}}
{\pgfsetcolor{\tikz@pattern@color}
\pgfsetlinewidth{\mcThickness}
\pgfpathmoveto{\pgfpointorigin}
\pgfpathlineto{\pgfpoint{\mcSize}{0}}
\pgfpathmoveto{\pgfpointorigin}
\pgfpathlineto{\pgfpoint{0}{\mcSize}}
\pgfusepath{stroke}}}
\makeatother

\tikzset{
pattern size/.store in=\mcSize, 
pattern size = 5pt,
pattern thickness/.store in=\mcThickness, 
pattern thickness = 0.3pt,
pattern radius/.store in=\mcRadius, 
pattern radius = 1pt}\makeatletter
\pgfutil@ifundefined{pgf@pattern@name@_3wo0asl88}{
\pgfdeclarepatternformonly[\mcThickness,\mcSize]{_3wo0asl88}
{\pgfqpoint{-\mcThickness}{-\mcThickness}}
{\pgfpoint{\mcSize}{\mcSize}}
{\pgfpoint{\mcSize}{\mcSize}}
{\pgfsetcolor{\tikz@pattern@color}
\pgfsetlinewidth{\mcThickness}
\pgfpathmoveto{\pgfpointorigin}
\pgfpathlineto{\pgfpoint{\mcSize}{0}}
\pgfpathmoveto{\pgfpointorigin}
\pgfpathlineto{\pgfpoint{0}{\mcSize}}
\pgfusepath{stroke}}}
\makeatother

\tikzset{
pattern size/.store in=\mcSize, 
pattern size = 5pt,
pattern thickness/.store in=\mcThickness, 
pattern thickness = 0.3pt,
pattern radius/.store in=\mcRadius, 
pattern radius = 1pt}
\makeatletter
\pgfutil@ifundefined{pgf@pattern@name@_l01vbf72o}{
\pgfdeclarepatternformonly[\mcThickness,\mcSize]{_l01vbf72o}
{\pgfqpoint{0pt}{0pt}}
{\pgfpoint{\mcSize}{\mcSize}}
{\pgfpoint{\mcSize}{\mcSize}}
{
\pgfsetcolor{\tikz@pattern@color}
\pgfsetlinewidth{\mcThickness}
\pgfpathmoveto{\pgfqpoint{0pt}{\mcSize}}
\pgfpathlineto{\pgfpoint{\mcSize+\mcThickness}{-\mcThickness}}
\pgfpathmoveto{\pgfqpoint{0pt}{0pt}}
\pgfpathlineto{\pgfpoint{\mcSize+\mcThickness}{\mcSize+\mcThickness}}
\pgfusepath{stroke}
}}
\makeatother
\tikzset{every picture/.style={line width=0.75pt}} 

\begin{tikzpicture}[x=0.75pt,y=0.75pt,yscale=-1,xscale=1]

\draw    (100.75,109.81) -- (100.49,210.85) ;
\draw    (100.49,210.85) -- (200.71,210.94) ;
\draw    (100.83,10.67) -- (100.75,109.81) ;
\draw    (199.83,110.33) -- (200.71,210.94) ;
\draw    (199.83,110.33) -- (300.05,110.42) ;
\draw    (299.83,10.67) -- (300.05,110.42) ;
\draw    (100.83,10.67) -- (299.83,10.67) ;
\draw  [pattern=_foblkcch2,pattern size=5.25pt,pattern thickness=0.75pt,pattern radius=0pt, pattern color={rgb, 255:red, 0; green, 0; blue, 0}] (101.33,10.67) -- (200.33,10.67) -- (200.33,110.33) -- (101.33,110.33) -- cycle ;
\draw  [pattern=_3oiuk3tp3,pattern size=1.5pt,pattern thickness=0.75pt,pattern radius=0pt, pattern color={rgb, 255:red, 208; green, 2; blue, 27}] (288.95,21.4) -- (299.83,21.4) -- (299.83,10.67) -- (288.95,10.67) -- cycle ;
\draw  [pattern=_3wo0asl88,pattern size=1.5pt,pattern thickness=0.75pt,pattern radius=0pt, pattern color={rgb, 255:red, 74; green, 144; blue, 226}] (100.49,210.85) -- (111.38,210.85) -- (111.38,200.12) -- (100.49,200.12) -- cycle ;
\draw   (240.14,129.62) -- (260.14,129.62) -- (260.14,150.26) -- (240.14,150.26) -- cycle ;
\draw  [pattern=_l01vbf72o,pattern size=5.25pt,pattern thickness=0.75pt,pattern radius=0pt, pattern color={rgb, 255:red, 0; green, 0; blue, 0}] (239.33,170.67) -- (259.56,170.67) -- (259.56,191.23) -- (239.33,191.23) -- cycle ;

\draw (266,22.4) node [anchor=north west][inner sep=0.75pt]  [font=\Large]  {$f^{I}$};
\draw (110,175.4) node [anchor=north west][inner sep=0.75pt]  [font=\Large]  {$f^{P}$};
\draw (271,132.4) node [anchor=north west][inner sep=0.75pt]  [font=\Large]  {$\kappa _{1}$};
\draw (271,175.4) node [anchor=north west][inner sep=0.75pt]  [font=\Large]  {$\kappa _{2}$};
\end{tikzpicture}
    \caption{L-shape domain and source/sink functions set-up. Permeability is discontinuous and takes the values $\kappa_1$ in $\Omega_1$ and $\kappa_2$ in the rest of the domain.}
     \label{fig:Lshape}
\end{figure}
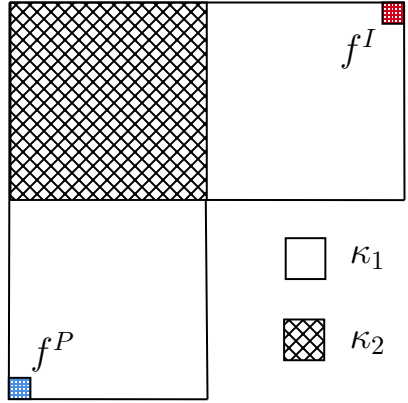
The permeability field is piecewise constant: we set $\kappa_1 = 1$ in $\Omega_1$ and $\kappa_2 = 10^{-6}$ in $\Omega_2$. As for the problem parameters, we set the porosity  
$\phi = 0.1$, the molecular diffusion coefficient $d_0 = 1.8 \times 10^{-6}$, the transverse dispersion coefficient $\alpha_t = 1.8 \times 10^{-5}$, and the longitudinal dispersion coefficient $\alpha_l = 1.8 \times 10^{-4}$. We assume a quarter power mixing law with
 $\mu(c) = (c \mu_s^{-0.25} + (1-c) \mu_o^{-0.25})^{-4}$, and mobility ratio $\mu_o/\mu_s= 4$. There is a source $f^I$ and a sink $f^P$, that are piecewise constant functions.  The support of $f^I$ (resp. $f^P$) is a small square of size $10^{-2}\times 10^{-2}$ located at the bottom left (resp. top right) corner.  The functions $f^I$ and $f^P$ are equal to $180$ on their respective supports, so that
 \begin{align*}
 \int_{\Omega} f^I \dif x = \int_{\Omega} f^P \dif x = 0.018.
 \end{align*}
 While we do not know the exact regularity of the velocity, we can get a rough estimate on the regularity of the solution of the flow problem via \cite[Theorem 7.5]{Petzoldt:2002}. In particular, if the concentration were fixed to be constant, we expect $p \in H^{1+s}(\Omega)$ and $u = -\bm K(\cdot) \nabla p \in H(\text{div};\Omega) \cap H^s(\Omega)$ with $s = \pi^{-1}\arctan\sqrt{\kappa_{2}/\kappa_{1}} \approx 0.0003$. 
 
Below, we compare the performance of the HDG method with and without $H(\text{div};\Omega)$ reconstruction on a fixed spatial mesh consisting of 36604 elements, sufficiently refined around the re-entrant corner and the injection and production wells to resolve the steep gradients in the velocity field. Throughout, we take $k=3$. Starting from an initial concentration $c_0(x) = 0$, we run each simulation until a final time of $T=5$. In~\Cref{fig:breakthrough}, we plot the average concentration at the production well as a function of time with time-step sizes $\tau \in \cbr{0.1, 0.05, 0.025, 0.0125}$ for the concentration computed with and without the $H(\text{div})$-reconstruction of the velocity. For the solution with $H(\text{div})$ reconstruction, we observe similar breakthrough curves for $\tau \in \cbr{0.1, 0.05, 0.025, 0.0125}$. While we do see that tighter coupling (i.e., smaller $\tau$) between the flow and transport increases the initial rate of recovery, for larger times (i.e. $t>3$) we see close agreement between the breakthrough curves for each $\tau$. 
For the solution without reconstruction, we initially see close agreement with the solution with reconstruction. However, once the concentration front reaches the re-entrant corner, we observe spurious oscillations in the average concentration that worsen as we reduce the time-step size, to the point of solver failure when $T = 4.5625$ in the case of $\tau = 0.0125$.

\begin{figure}[H]
\centering
\includegraphics[width=0.3\textwidth,trim={1 1cm 6cm 1cm},clip]{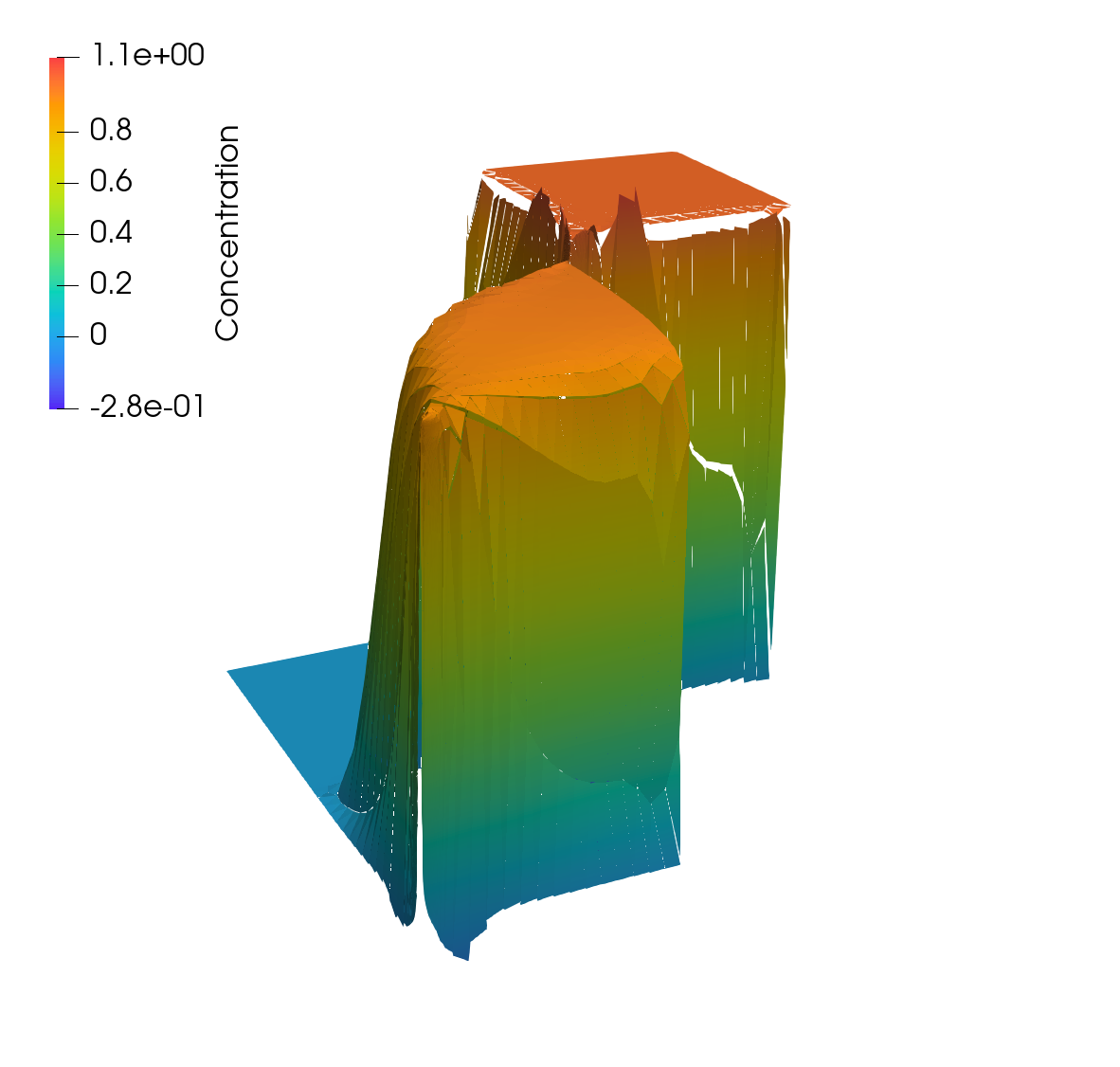}
\includegraphics[width=0.3\textwidth,trim={1 1cm 6cm 1cm},clip]{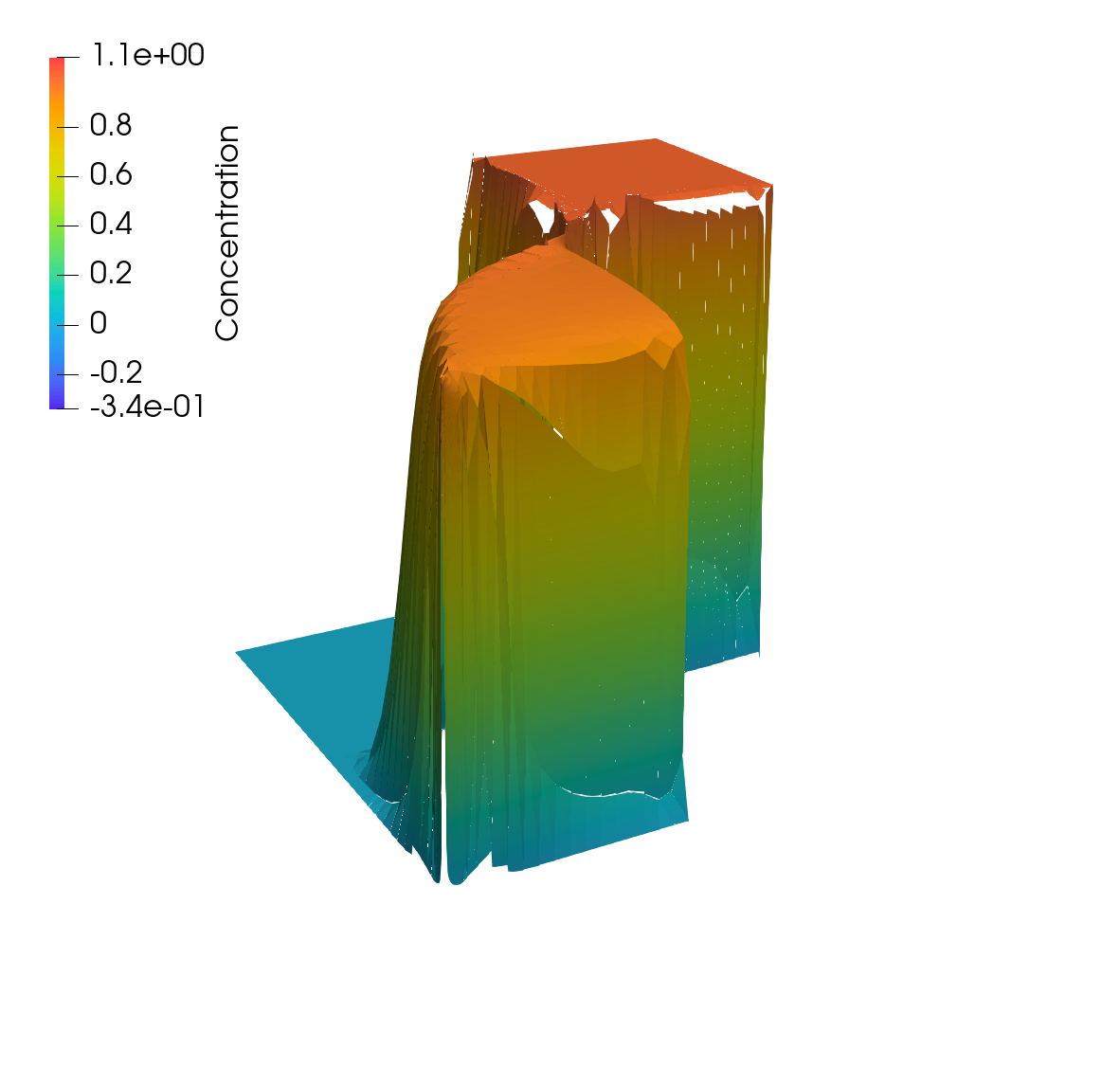}
\includegraphics[width=0.3\textwidth,trim={1 1cm 6cm 1cm},clip]{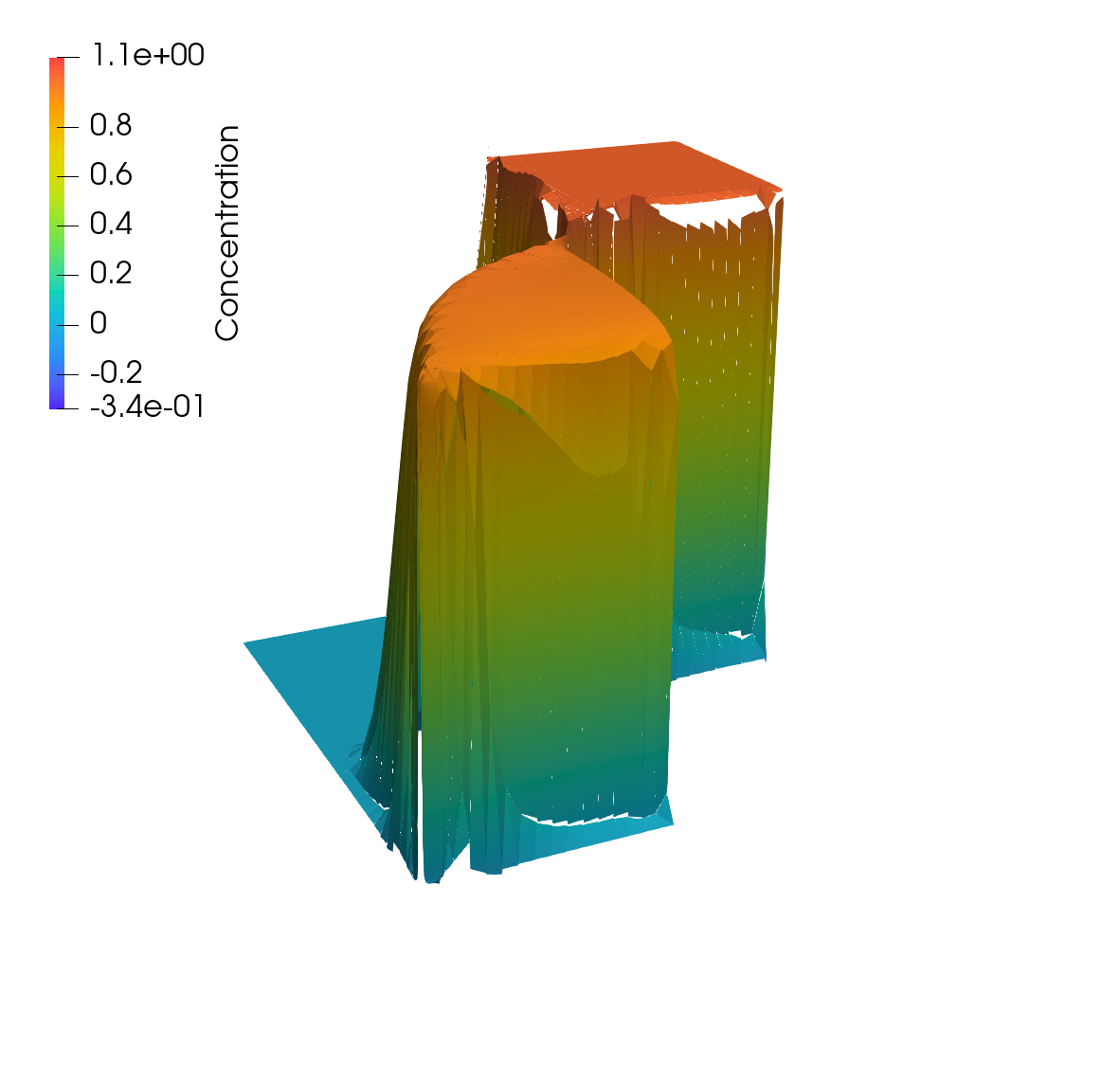}\\
\includegraphics[width=0.3\textwidth,trim={1 1cm 6cm 1cm},clip]{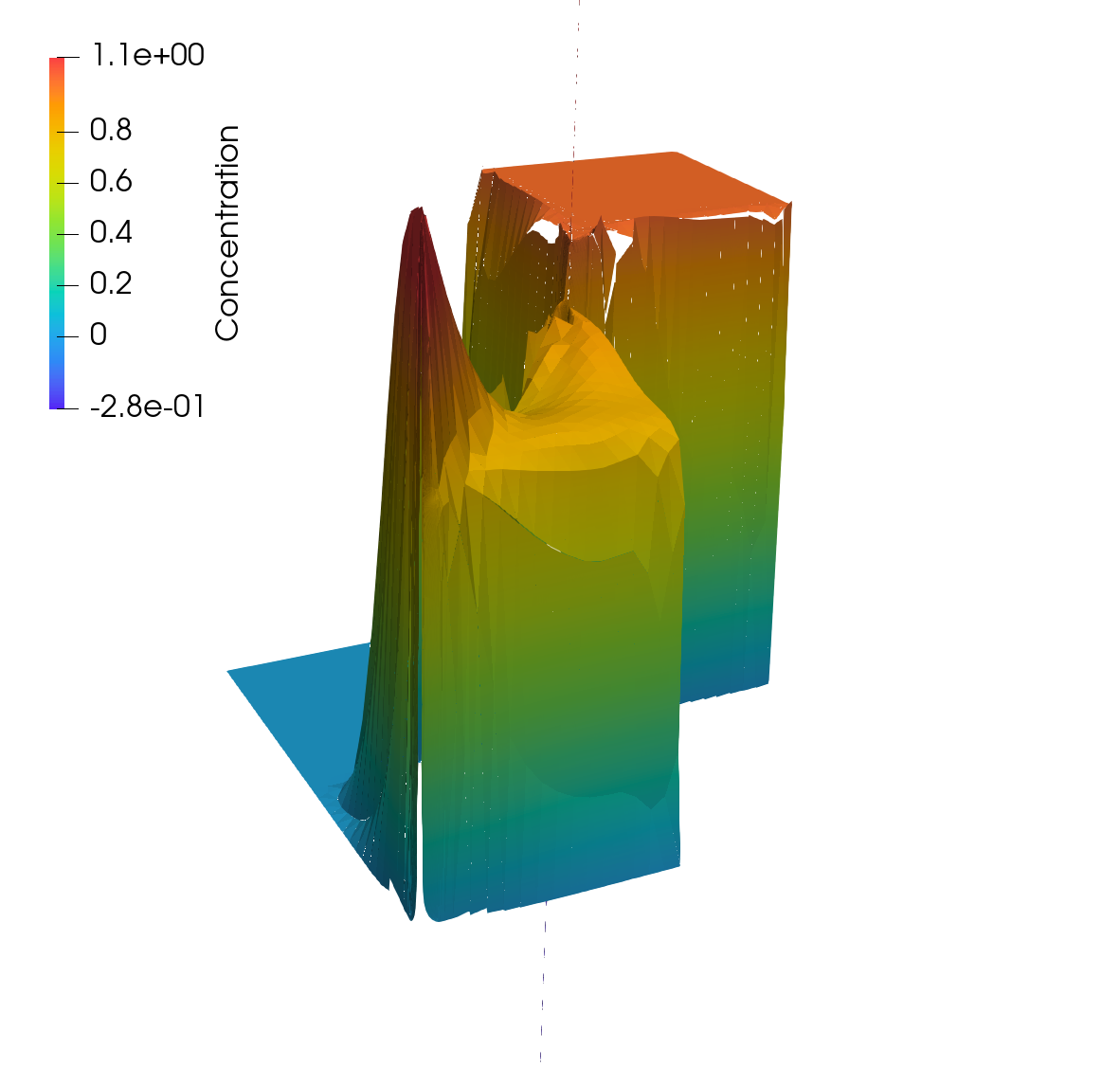}
\includegraphics[width=0.3\textwidth,trim={1 1cm 6cm 1cm},clip]{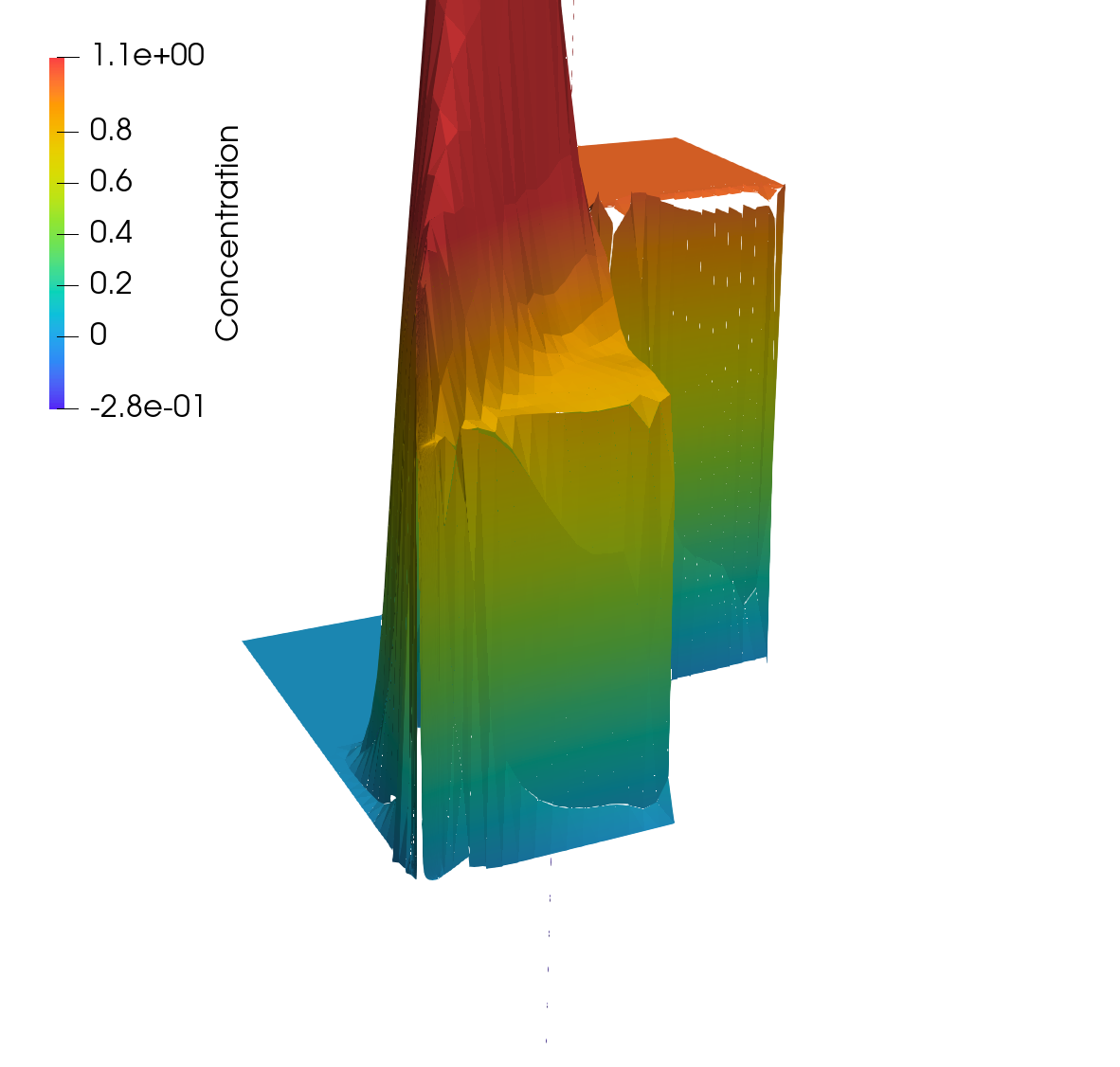}
\includegraphics[width=0.3\textwidth,trim={1 1cm 6cm 1cm},clip]{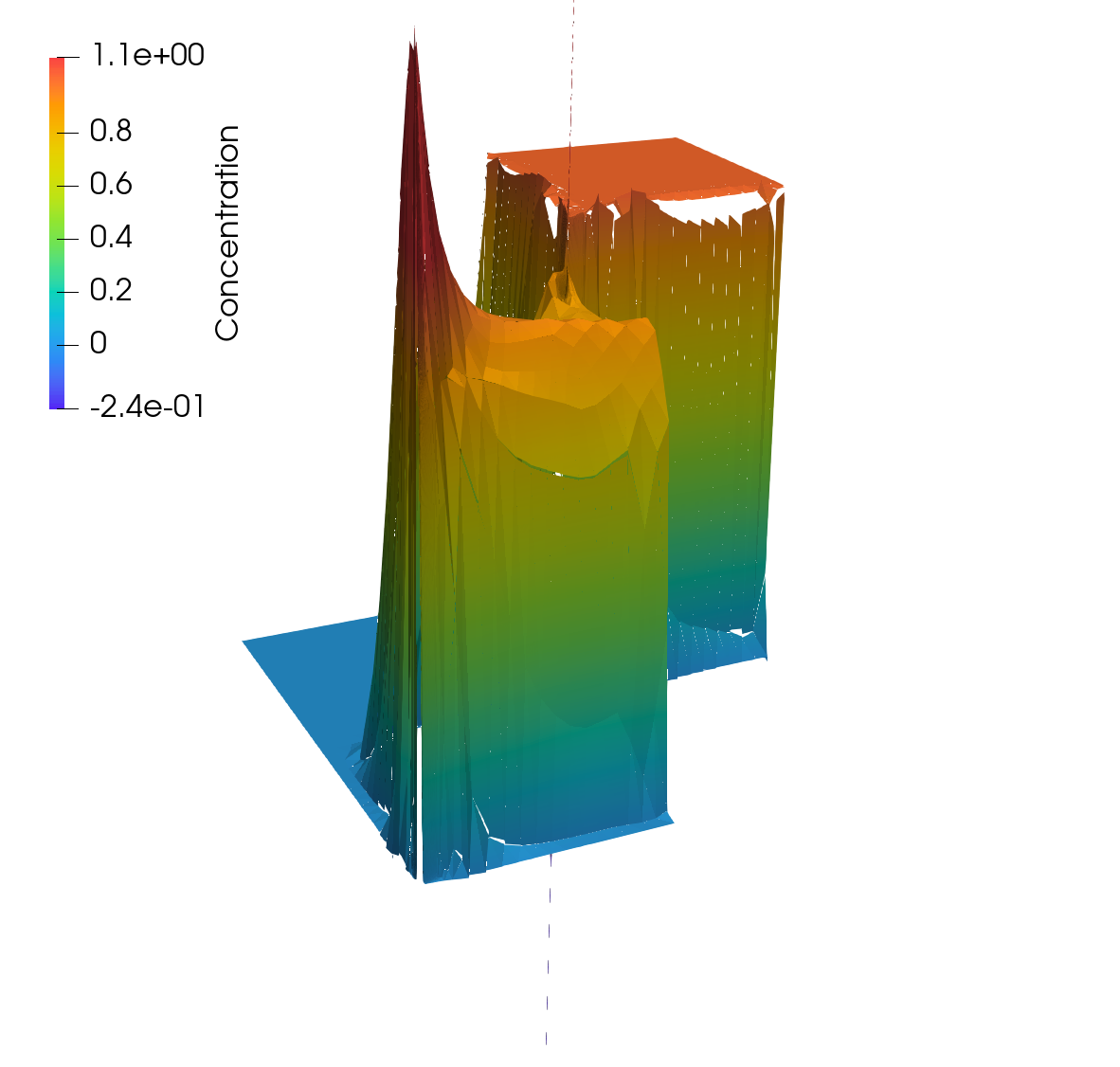}
\caption{Surface plots of the approximate concentration at $t=3$ for $\tau = 0.1$ (left), $\tau = 0.05$ (middle), $\tau = 0.025$ (right). Top row: with $H(\text{div})$ reconstruction; bottom row: without $H(\text{div})$ reconstruction.}
\label{fig:sing_conc}
\end{figure}
\begin{figure}[H]
\centering
\includegraphics[width=0.3\textwidth,trim={0 0cm 0cm 0},clip]{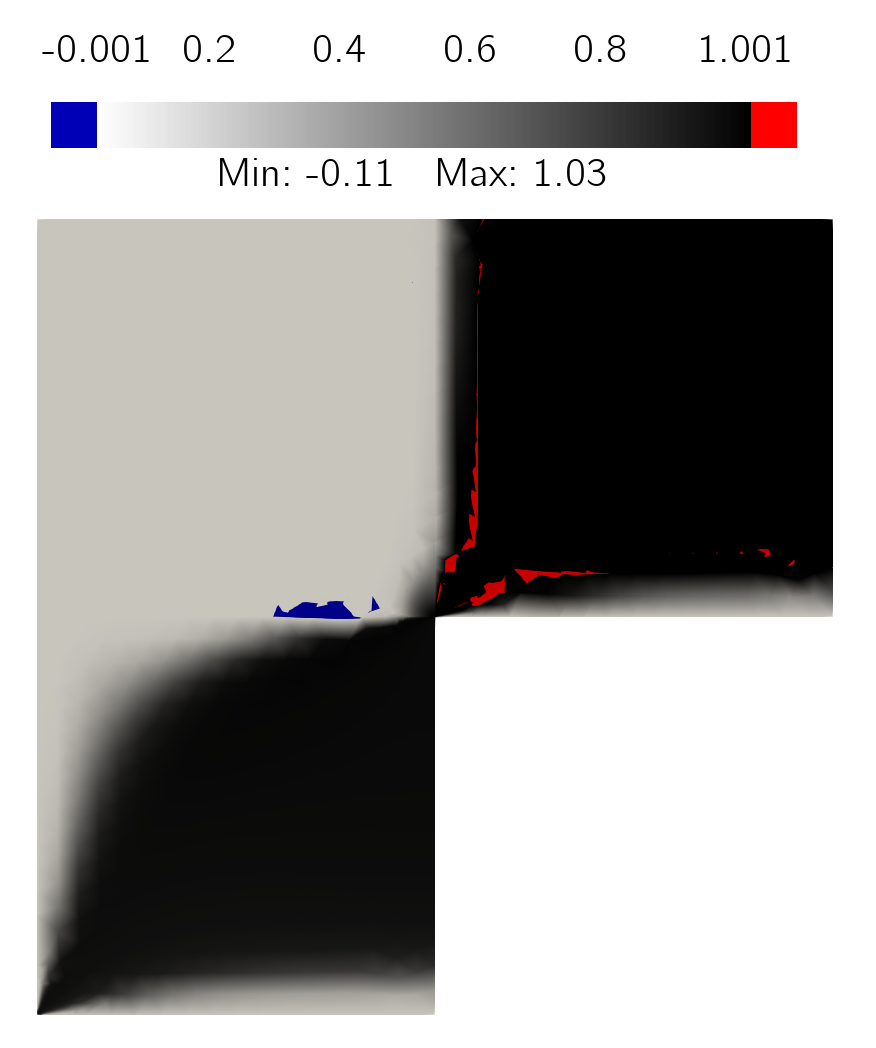} \hspace{1em}
\includegraphics[width=0.3\textwidth,trim={0 0cm 0cm 0},clip]{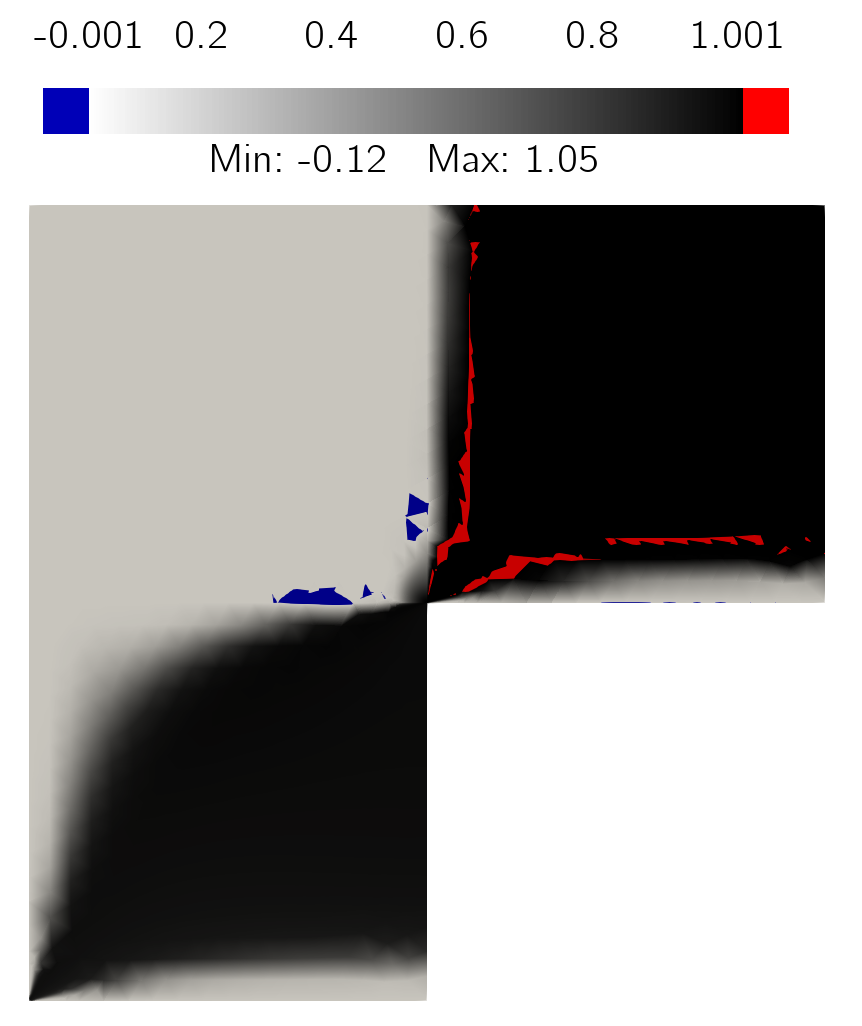}
\hspace{1em}
\includegraphics[width=0.3\textwidth,trim={0 0cm 0cm 0},clip]{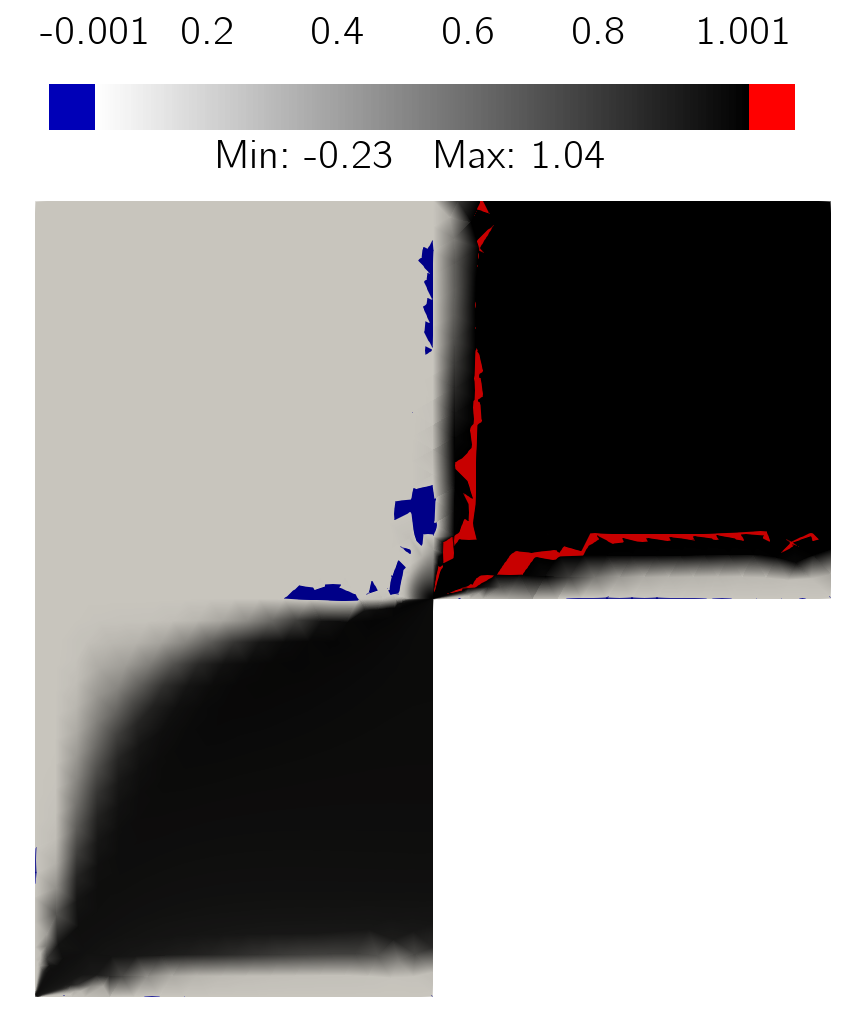}\\
\includegraphics[width=0.3\textwidth,trim={0 0cm 0cm 0},clip]{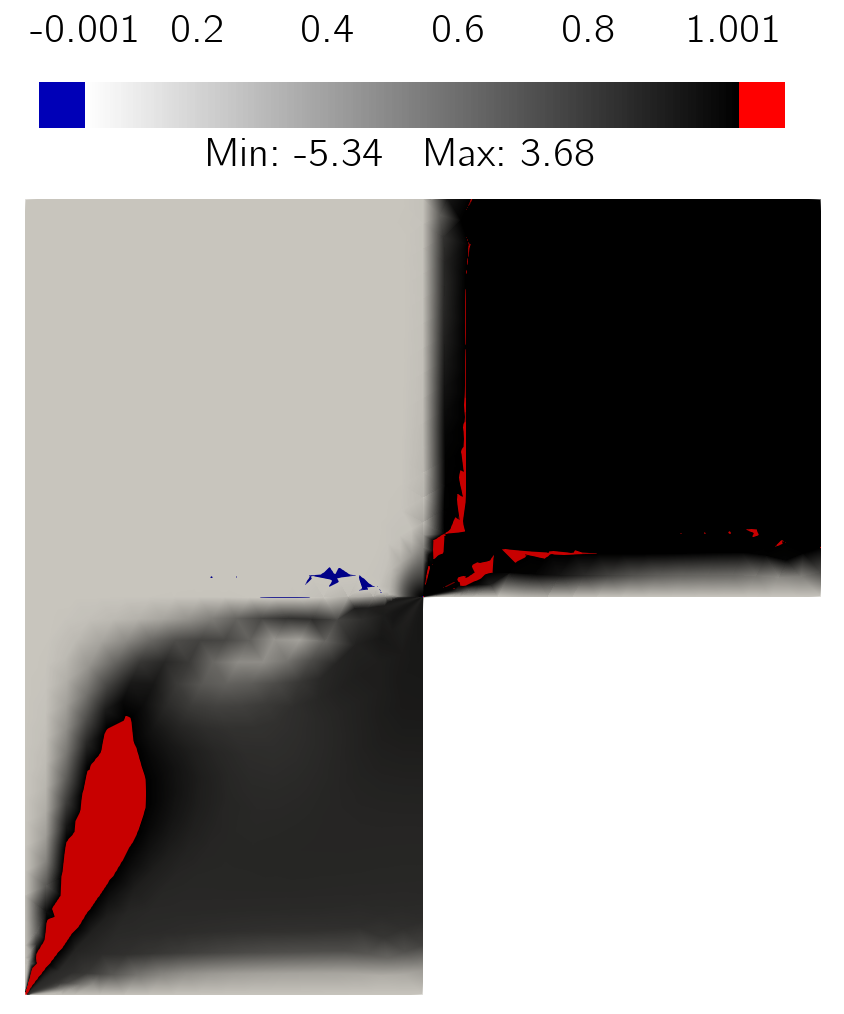}
\hspace{1em}
\includegraphics[width=0.3\textwidth,trim={0 0cm 0cm 0},clip]{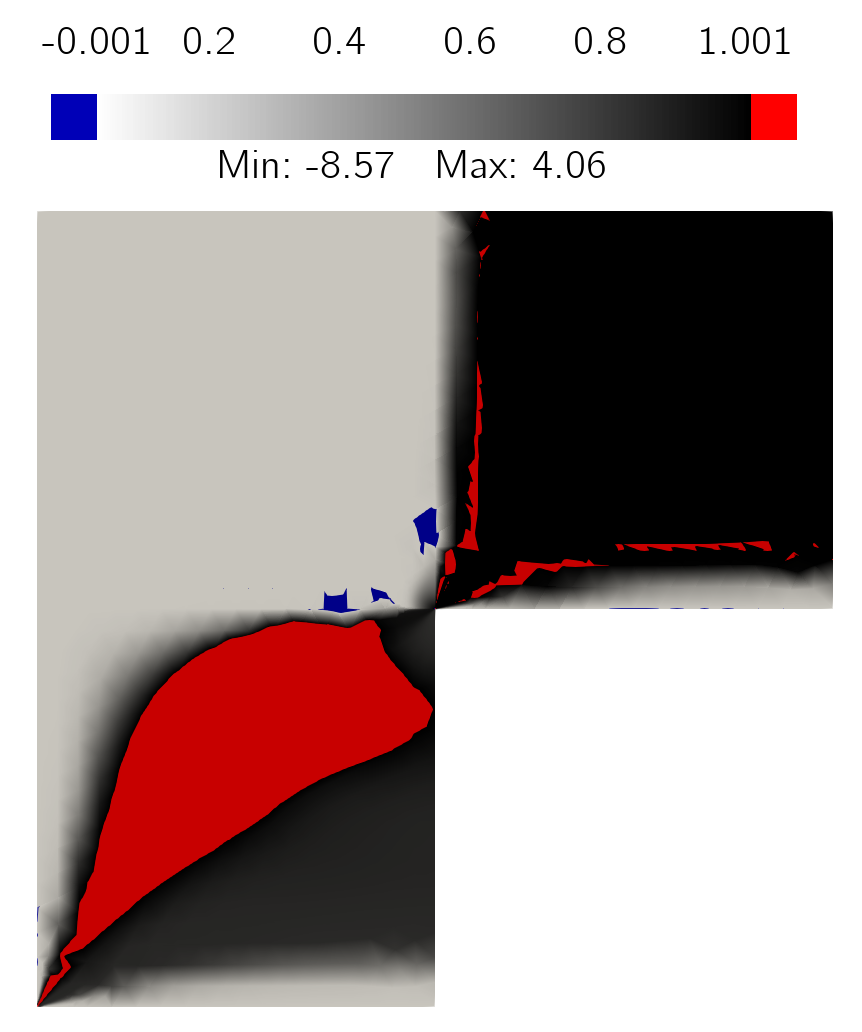}
\hspace{1em}
\includegraphics[width=0.3\textwidth,trim={0 0cm 0cm 0},clip]{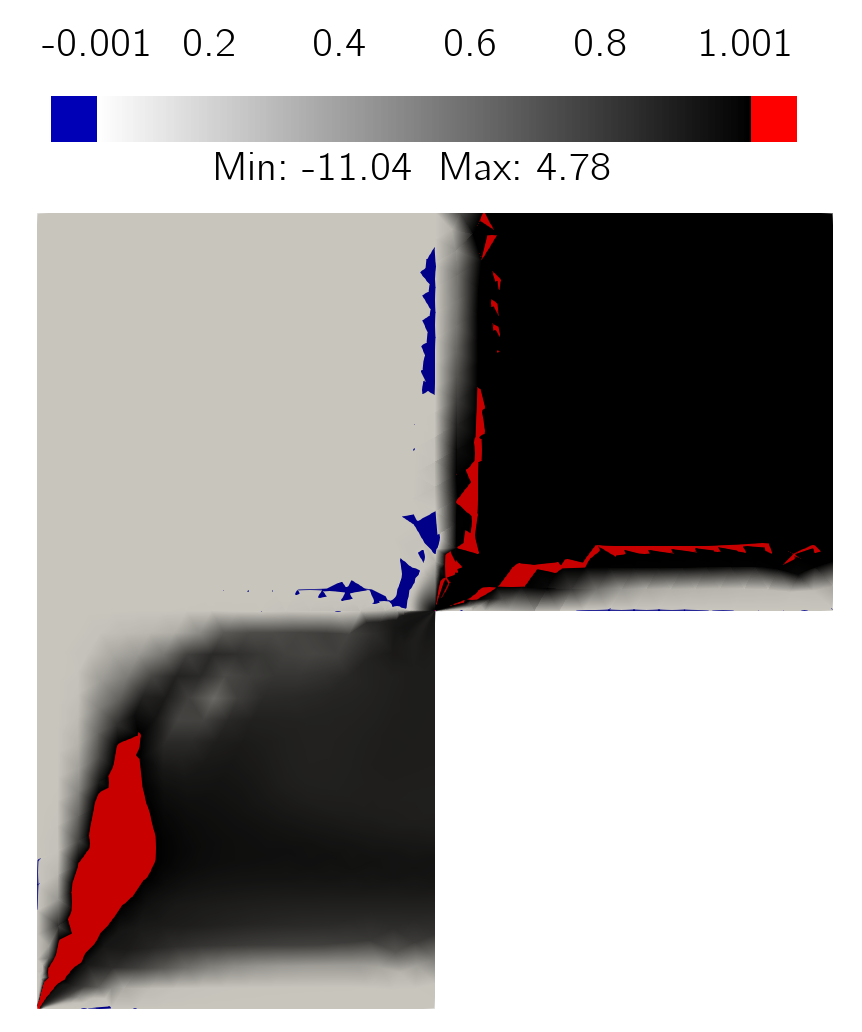}
\caption{Contour plots of the approximate concentration at $t=3$ for $\tau = 0.1$ (left), $\tau = 0.05$ (middle), $\tau = 0.025$ (right). Top row: with $H(\text{div})$ reconstruction; bottom row: without $H(\text{div})$ reconstruction. Overshoots (values above $1.001$) and undershoots (values below $-0.001$) are shaded in red and blue, respectively.}
\label{fig:sing_conc_2D}
\end{figure}

\begin{figure}[H]
\centering
\includegraphics[width=0.45\textwidth]{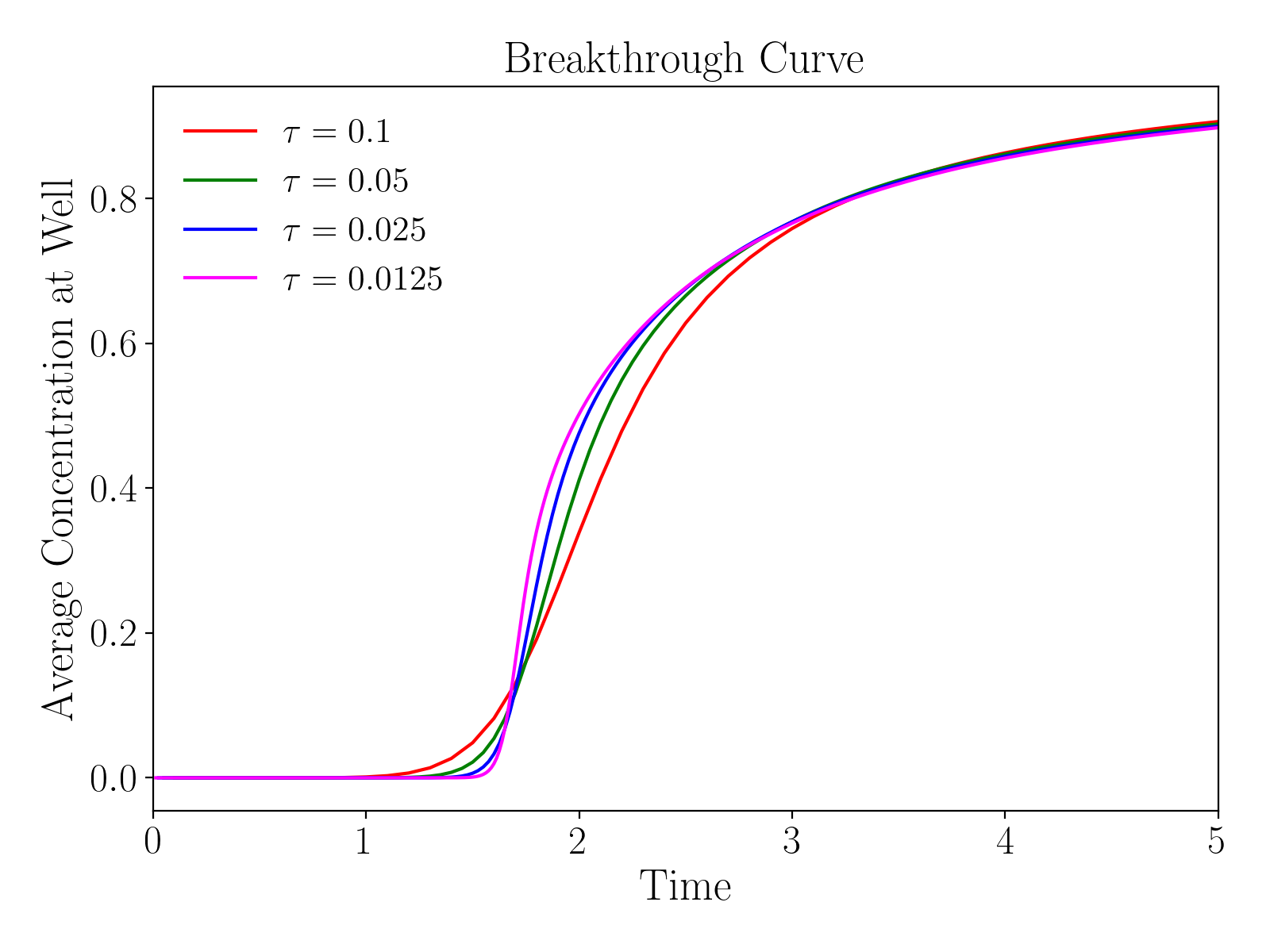}
\includegraphics[width=0.45\textwidth]{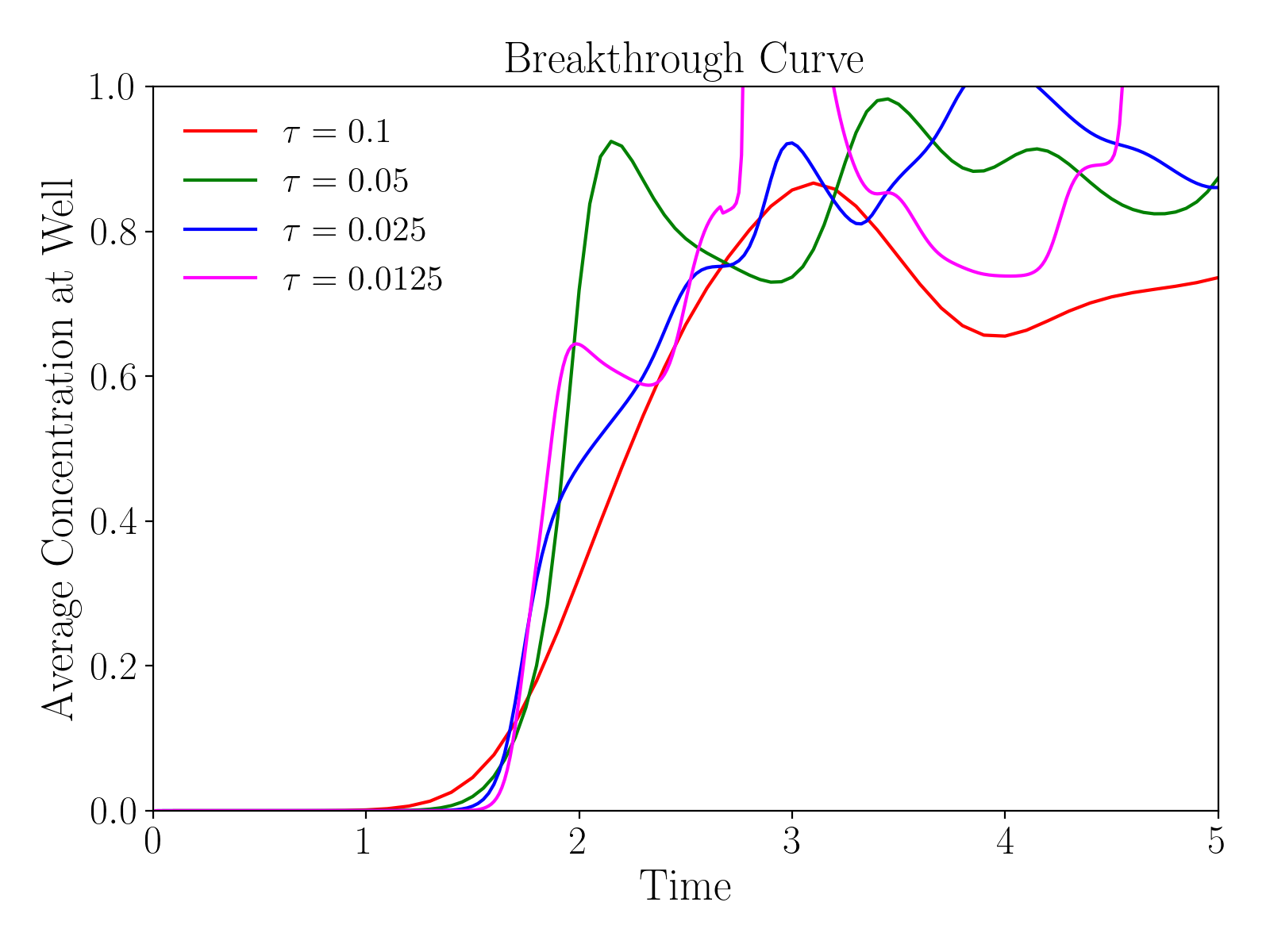}
\caption{$k=3$, with reconstruction (left) and no reconstuction (right) comparing $\tau = 0.1$, $\tau = 0.05$,$\tau = 0.025$, $\tau = 0.0125$ from $t=0$ to $t=5$. Note: for $\tau=0.0125$, solver fails at $t = 4.5625$ for case of no reconstruction}
\label{fig:breakthrough}
\end{figure}

\section{Conclusions}

Well-posedness of an HDG method of arbitrary order for the miscible displacement is proved. 
The convergence analysis  is based on a compactness argument because of the minimal regularity of the weak solutions. Convergence is proved  for any polynomial order greater than or equal to one.
The scheme is carefully designed to handle the coupling and the nonlinearities, in particular for the concentration equation. As a by-product, a compactness result for time-dependent HDG approximations is obtained.  Numerical simulations for low regularity solution show the importance of having an H(div) conforming velocity.

\appendix

\section{Miscellaneous proofs}

\subsection{Proof of \Cref{lem:HDG_poincare}}
\label{ss:appendix_poincare}

\begin{proof}
The proof of \cref{eq:disc_embed} can be found in \cite{Jiang:2023}. The proof of \cref{eq:poinc_on_Qh} follows from a slight modification of the arguments presented in \cite{Yue:2024}; for completeness, we give the details of the proof below.

 		Let $\widehat \pi_e^0: L^2(e) \to \mathbb{P}_0(e)$ be the orthogonal $L^2$-projection onto $\mathbb{P}_0(e)$ and let  $\pi_E^0: L^2(E) \to \mathbb{P}_0(E)$ be the orthogonal $L^2$-projection onto $\mathbb{P}_0(E)$.
 		Following \cite{Jiang:2023}, we will proceed by ``lifting'' $\widehat{w}_h$ into the Crouzeix--Raviart space
 		\[
 		\mathcal{CR}_h = \cbr{ v \in L^2(\Omega): \, v|_E \in \mathbb{P}_1(E), \, \forall E \in \mathcal{E}_h, \quad \int_e \jump{v} \dif s = 0, \, \forall e \in \partial \mathcal{E}_h^{\mathrm{int}}},
 		\]
 		so that we may leverage the broken Poincar\'e inequalities from \cite{Brenner:2004}. 
 		We define the lifting $\mathcal{L}_h^{\rm CR}( \widehat{w}_h)$ to be the unique element of the Crouzeix--Raviart space
 		satisfying for all $e \in \partial  \mathcal{E}_h$, 
 		\[
 		\frac{1}{|e|}\int_e \mathcal{L}_h^{\rm CR}(	\widehat w_{h}) \dif s =  \widehat \pi_e^0 \widehat{w}_h.
 		\]
 		By the triangle inequality,
 		\begin{equation} \label{eq:disc_sobolev_1}
 			\norm{w_h}_{L^p(\Omega)} \le \norm[1]{w_h - \mathcal{L}_h^{\rm CR}(\widehat w_{h})}_{L^p(\Omega)} + \norm[1]{\mathcal{L}_h^{\rm CR}(	\widehat w_{h})}_{L^p(\Omega)}.
 		\end{equation}
		To bound the first term on the right hand side of \eqref{eq:disc_sobolev_1}, observe that for each $E \in \mathcal{E}_h$  and a fixed face $e_i\subset\partial E$, and for any $p\geq 1$
 		\begin{align}
 			&\norm[1]{w_h - \mathcal{L}_h^{\rm CR}(\widehat w_{h})}_{L^p(E)} \le \nonumber\\ &  \norm[1]{(w_h - \mathcal{L}_h^{\rm CR}(\widehat w_{h})) - \widehat \pi_{e_i}^0 (w_h - \mathcal{L}_h^{\rm CR}(\widehat w_{h}) )}_{L^p(E)}  +  \norm[1]{\widehat \pi_{e_i}^0 (w_h - \mathcal{L}_h^{\rm CR}(\widehat{w}_h))}_{L^p(E)}. \label{eq:pihatint}
 		\end{align}
 		Now,  since $\widehat{\pi}_{e_i}^0$ preserves constants (with $h_E$ denoting the diameter of $E$), we obtain for the first term
		\[
 		\norm[1]{(w_h - \mathcal{L}_h^{\rm CR}(\widehat w_{h})) - \widehat \pi_{e_i}^0 (w_h - \mathcal{L}_h^{\rm CR}(\widehat w_{h}) )}_{L^p(E)} \lesssim h_E \norm{\nabla w_h}_{L^p(E)},
		\]
 		and thus using the discrete inverse inequality, we have
 		\[
 		\norm[1]{(w_h - \mathcal{L}_h^{\rm CR}(\widehat w_{h})) - \widehat \pi_{e_i}^0 (w_h - \mathcal{L}_h^{\rm CR}(\widehat w_{h}) )}_{L^p(E)} \lesssim h_E^{d(1/p - 1/2) + 1} \norm{\nabla w_h}_{L^2(E)}.
 		\]
 		For the second term in \eqref{eq:pihatint}, we write
 		\begin{align*}
 			\norm[1]{\widehat \pi_{e_i}^0 (w_h - \mathcal{L}_h^{\rm CR}(\widehat w_{h}))}_{L^p(E)} &= \frac{|E|^{1/p}}{|e_i|} \int_{e_i} (w_h - \widehat w_h) \dif s \\
			& \le  \frac{|E|^{1/p}}{|e_i|^{1/2}} \norm{w_h - \widehat w_h}_{L^2(e_i)} \\
 			& \lesssim h_E^{d(1/p - 1/2) + 1/2}  \norm{w_h - \widehat w_h}_{L^2(\partial E)}.
 		\end{align*}
        This implies, with $1 \le p \le 2d/(d-1)$,
		\begin{equation} \label{eq:disc_sobolev_2}
 			\norm[1]{w_h - \mathcal{L}_h^{\rm CR}(\widehat w_{h})}_{L^p(E)}  \lesssim  h_E^{d(1/p-1/2)+1/2}  \left(h_E^{1/2}\norm{\nabla w_h}_{L^2(E)} + \norm{w_h - \widehat w_h}_{L^2(\partial E)}\right).
 		\end{equation}
 		To bound the gradient, we have from the triangle inequality that
		\begin{align*}
 			\norm{\nabla w_h}_{L^2(E)}  \lesssim \norm{\bm{G}_h(w_h,\widehat w_h) - \nabla w_h}_{L^2(E)} +  \norm{\bm{G}_h(w_h,\widehat w_h)}_{L^2(E)},
 		\end{align*}
 		while \eqref{eq:disc_grad}, the Cauchy--Schwarz's inequality, and the discrete trace inequality yield
		\begin{align}
 			\Vert\bm{G}_h(w_h,\widehat w_h) - & \nabla w_h\Vert_{L^2(E)}^2 
 			 = \langle (\bm{G}_h(w_h,\widehat w_h) - \nabla w_h) \cdot \bm n, \widehat{w}_h -  w_h \rangle_{\partial E} \nonumber\\
 			& \lesssim h_E^{-1/2} \norm{\bm{G}_h(w_h,\widehat w_h) - \nabla w_h}_{L^2(E)} \norm{w_h - \widehat w_h}_{L^2(\partial E)}. \label{eq:usefulboundforG}
 		\end{align}
      Therefore we have
         \begin{equation}\label{eq:boundgradint}
         h_E^{1/2} \Vert \nabla w_h \Vert_{L^2(E)} \lesssim h_E^{1/2} \Vert \bm{G}_h(w_h,\widehat w_h)\Vert_{L^2(E)} +  \Vert w_h -\widehat{w}_h\Vert_{L^2(\partial E)},
         \end{equation}
 		which means that
 		\begin{multline}\label{eq:disc_sobolev_3}
 			\norm[1]{w_h - \mathcal{L}_h^{\rm CR}(\widehat w_{h})}_{L^p(E)} \lesssim  h_E^{d(1/p-1/2)+1/2} \\ \left(h_E^{1/2}\norm{\bm{G}_h(w_h,\widehat w_h)}_{L^2(E)} + \norm{w_h - \widehat w_h}_{L^2(\partial E)}\right).
 		\end{multline}
When $2\leq p\leq 2d/(d-1)$, Jensen's inequality yields:
\begin{equation}\label{eq:intboundOmega}
\norm[1]{w_h - \mathcal{L}_h^{\rm CR}(\widehat w_{h})}_{L^p(\Omega)} \lesssim h^{1/2}\norm{\bm{G}_h(w_h,\widehat w_h)}_{L^2(\Omega)} + \norm{w_h - \widehat w_h}_{L^2(\partial \mathcal{E}_h)}.
 \end{equation}
        Next, for the second term on the right hand side of \cref{eq:disc_sobolev_1}, we use the ``enrichment" operator $E_h: \mathcal{CR}_h \to \mathbb{P}_d(\mathcal{E}_h) \cap H^1(\Omega)$ and the ``restriction" operator (also called the ``forgetting" operator) $F_h: \mathbb{P}_d(\mathcal{E}_h) \cap H^1(\Omega)  \to \mathcal{CR}_h$ defined in \cite[Section 3]{Brenner:2004}, which satisfy $F_h(E_h(v_h))   = v_h  $   for all $v_h \in \mathcal{CR}_h$. In \cite[Lemma 3.2, Corollary 3.3]{Brenner:2004}, the following bounds are shown: for all $v_h \in \mathcal{CR}_h$,
		\begin{align}\label{eq:boundsEh}
 		& \norm{E_h(v_h)}_{L^2(\Omega)} \lesssim  \norm{v_h}_{L^2(\Omega)}, \quad \norm{\nabla E_h(v_h)}_{L^2(\Omega)} \lesssim \norm{\nabla_h v_h}_{L^2(\Omega)},\\
   & \forall E\in\mathcal{E}_h, \quad \Vert v_h - E_h(v_h)\Vert_{L^2(E)} \lesssim  h_E \Vert \nabla_h v_h\Vert_{L^2(\Delta_E)}, \label{eq:EnrichApproxLocal}
 		\end{align}
   where $\Delta_E$ is a macro-element containing $E$, 
 		and for all $v_h \in  \mathbb{P}_d(\mathcal{E}_h) \cap H^1(\Omega)$ and $E \in \mathcal{E}_h$,
 		\[
 		\norm{F_h(v_h)}_{L^2(E)} \lesssim  h_E \Vert \nabla v_h \Vert_{L^2(E)} + \Vert v_h \Vert_{L^2(E)} \lesssim  \norm{v_h}_{L^2(E)}.
 		\]

 		Using the discrete inverse inequality, we extend this latter stability result to $L^p(\Omega)$:
 		\begin{align*}
 			\norm{F_h(v_h)}_{L^p(\Omega)}^p&= \sum_{E \in \mathcal{E}_h} \norm{F_h(v_h)}_{L^p(E)}^p\\
 			& \lesssim \sum_{E \in \mathcal{E}_h} h_E^{d(1 - p/2)} \norm{F_h(v_h)}_{L^2(E)}^p \\
 			& \lesssim \sum_{E \in \mathcal{E}_h} h_E^{d(1-p/2)} \norm{v_h}_{L^2(E)}^p \\
 			& \lesssim \sum_{E \in \mathcal{E}_h} h_E^{d(1-p/2)} h_E^{d(p/2-1)} \norm{v_h}_{L^p(E)}^p
 			\\ & \lesssim \norm{v_h}_{L^p(\Omega)}^p.
		\end{align*}
        The bound \eqref{eq:EnrichApproxLocal} easily implies
  \begin{equation}
   \Vert v_h - E_h(v_h)\Vert_{L^2(\Omega)} \lesssim  h \Vert \nabla_h v_h\Vert_{L^2(\Omega)}. \label{eq:EnrichApprox}
   \end{equation}
 		It then follows from these stability bounds and the Sobolev embedding theorem that for all $2 \le p \le 2d/(d-1)$,
 		\begin{align*}
 			\norm[1]{\mathcal{L}_h^{\rm CR}(	\widehat w_{h})}_{L^p(\Omega)}& = \norm[1]{F_h(E_h(\mathcal{L}_h^{\rm CR}(	\widehat w_{h})))}_{L^p(\Omega)} 
            \\
 			& \lesssim \norm[1]{E_h(\mathcal{L}_h^{\rm CR}(	\widehat w_{h}))}_{H^1(\Omega)} \\
 			& \lesssim \norm[1]{\mathcal{L}_h^{\rm CR}(	\widehat w_{h})}_{L^2(\Omega)} + \norm[1]{ \nabla_h \mathcal{L}_h^{\rm CR}(	\widehat w_{h})}_{L^2(\Omega)}.
		\end{align*}
 		Finally, with \eqref{eq:disc_sobolev_3} for $p=2$, we have
   \begin{align}
 			\norm[1]{\mathcal{L}_h^{\rm CR}(	\widehat w_{h})}_{L^2(E)} & \le \norm[1]{w_h - \mathcal{L}_h^{\rm CR}(	\widehat w_{h})}_{L^2(E)} + \norm[1]{w_h}_{L^2(E)}  \notag \\ 
 			& \lesssim h_E^{1/2} \norm{\bm G_h(w_h,\widehat{w}_h)}_{L^2(E)} + \norm{w_h - \widehat w_h}_{L^2(\partial E)} + \norm[1]{w_h}_{L^2(E)}. 
		\end{align}
  This implies of course
   \begin{align}\label{eq:disc_sobolev_4}
 			\norm[1]{\mathcal{L}_h^{\rm CR}(	\widehat w_{h})}_{L^2(\Omega)} 
 			& \lesssim h^{1/2} \norm{\bm G_h(w_h,\widehat{w}_h)}_{L^2(\Omega)} + \norm{w_h - \widehat w_h}_{L^2(\partial \mathcal{E}_h)} + \norm[1]{w_h}_{L^2(\Omega)}. 
 		\end{align}
  To bound $\nabla_h \mathcal{L}_h^{\rm CR}(	\widehat w_{h})$, we remark that the broken gradient $\nabla_h \mathcal{L}_h^{\rm CR}(	\widehat w_{h})$ is the $L^2$-projection of $\bm G_h(w_h, \widehat w_h)$ onto $\mathbb{P}_{0}(\mathcal{E}_h)^d$. Indeed, this follows from the fact that, for all $\bm v_0 \in \mathbb{P}_{0}(\mathcal{E}_h)^d$, an element-wise integration by parts yields
 		\begin{align*}
 			(\bm G_h(w_h,\widehat{w}_h), \bm v_0)_{\mathcal{E}_h} &= (\nabla w_h, \bm v_0)_{\mathcal{E}_h} - \langle w_h - \widehat w_h, \bm q_h \cdot \bm n \rangle_{\partial \mathcal{E}_h} 
 			= ( \nabla \mathcal{L}_h^{\rm CR}(	\widehat w_{h}), \bm v_0)_{\mathcal{E}_h}.
 		\end{align*}
 		Consequently, the stability of the $L^2$-projection yields
 		\begin{equation} \label{eq:grad_LCR_HDG}
 		\norm[1]{  \nabla_h \mathcal{L}_h^{\rm CR}(	\widehat w_{h}) }_{L^2(\Omega)} \lesssim \norm{\bm G_h(w_h,\widehat{w}_h)}_{L^2(\Omega)}.
 		\end{equation}
        Therefore, we have
        \begin{equation}\label{eq:L2Lh}
       \norm[1]{\mathcal{L}_h^{\rm CR}(	\widehat w_{h})}_{L^p(\Omega)}
       \lesssim 
       \norm{\bm G_h(w_h,\widehat{w}_h)}_{L^2(\Omega)} + \norm{w_h - \widehat w_h}_{L^2(\partial \mathcal{E}_h)} + \norm[1]{w_h}_{L^2(\Omega)}.
        \end{equation}
        The inequality \eqref{eq:disc_embed} is then obtained by combining the bounds above.

        Next, assume that $w_h\in Q_h$.  To obtain \eqref{eq:poinc_on_Qh}, it suffices to find a bound for the term 
        $\norm[1]{\mathcal{L}_h^{\rm CR}(\widehat w_{h})}_{L^2(\Omega)}$ that does not contain the term $\Vert w_h \Vert_{L^2(\Omega)}$.
        By the usual broken Poincar\'{e} inequality for Crouzeix--Raviart functions (see, e.g. \cite[Theorem 4.1]{Brenner:2004}) and \cref{eq:grad_LCR_HDG}, we find
    \begin{align*}
     \norm[1]{\mathcal{L}_h^{\rm CR}(\widehat w_{h})}_{L^2(\Omega)} &\lesssim \norm[1]{ \nabla_h \mathcal{L}_h^{\rm CR}(	\widehat w_{h})}_{L^2(\Omega)} + \bigg|\int_{\Omega} \mathcal{L}_h^{\rm CR}(\widehat w_{h}) \dif x \bigg|\\
    & \lesssim \norm[1]{\bm G_h(w_h, \widehat w_h)}_{L^2(\Omega)} + \bigg|\int_{\Omega} \mathcal{L}_h^{\rm CR}(\widehat w_{h}) \dif x \bigg|.
    \end{align*}
Thus, to conclude,  it suffices to show that 
	\[
	\bigg| \int_\Omega \mathcal{L}_h^{\rm CR}(\widehat w_{h}) \dif x \bigg| \lesssim  \del[2]{\norm{ \bm{G}_h(w_h,\widehat w_h)}_{L^2(\Omega)}^2  + \sum_{E \in \mathcal{E}_h} \norm{ w_h - \widehat w_h}_{L^2(\partial E)}^2}^{1/2}. 
	\]
 	To this end, we follow the proof of \cite[Lemma 3.5]{Yue:2024}, reproduced here for completeness.
 	Given an element $E \in \mathcal{E}_h$, denote by $\bm c_{e,i}$ the midpoint of the $i^{\rm th}$ edge of $E$ in two dimensions, or the barycenter of the $i^{\rm th}$ face of $E$ in three dimensions. 
	Since $\mathcal{L}_h^{\rm CR}(\widehat w_{h})|_E \in \mathbb{P}_1(E)$ for all $E \in \mathcal{T}_h$, 
\begin{align*}
	\int_E\mathcal{L}_h^{\rm CR}(\widehat w_{h}) \dif x 
		&= \frac{|E|}{d+1}  \sum_{i=1}^{d+1} \mathcal{L}_h^{\rm CR}(\widehat w_{h})(\bm c_{E,i}) \\
		&= \frac{|E|}{d+1}   \sum_{i=1}^{d+1} \widehat{\pi}_{e_i}^0 \widehat{w}_h \\
		&= \frac{|E|}{d+1} \sum_{i=1}^{d+1}  \widehat{\pi}_{e_i}^0 (\widehat w_h - w_h)  + \frac{|E|}{d+1} \sum_{i=1}^{d+1}  (\widehat{\pi}_{e_i}^0 w_h - \pi_E^0 w_h) + |E| \pi_E^0 w_h.
	\end{align*}
	Therefore,
  \begin{equation*}
		\int_E\mathcal{L}_h^{\rm CR}(\widehat w_{h}) \dif x		=  \frac{|E|}{d+1} \sum_{i=1}^{d+1}  \widehat{\pi}_{e_i}^0 (\widehat w_h - w_h) + \frac{1}{d+1} \sum_{i=1}^{d+1} \int_E (\widehat{\pi}_{e_i}^0 w_h - w_h)\dif x + \int_E w_h \dif x.
	\end{equation*}
    Since $w_h \in L^2_0(\Omega)$, this yields
    \begin{equation} \label{eq:CR_integral_1}
		\int_\Omega \mathcal{L}_h^{\rm CR}(\widehat w_{h}) \dif x		=  \sum_{E\in\mathcal{E}_h} \frac{|E|}{d+1} \sum_{i=1}^{d+1}  \widehat{\pi}_{e_i}^0 (\widehat w_h - w_h) + \frac{1}{d+1} \sum_{E\in\mathcal{E}_h} \sum_{i=1}^{d+1} \int_E (\widehat{\pi}_{e_i}^0 w_h - w_h)\dif x.
	\end{equation}
 	For the first term on the right-hand side of \cref{eq:CR_integral_1}, we have
	\begin{align*}
 		\frac{|E|}{d+1} \sum_{i=1}^{d+1}  \widehat{\pi}_{e_i}^0 (\widehat w_h - w_h)  &= \frac{|E|}{d+1} \sum_{i=1}^{d+1} \frac{1}{|e_i|}  \int_{e_i} (\widehat w_h - w_h) \dif s \\
 		& \le  \frac{|E|}{d+1} \sum_{i=1}^{d+1} \frac{1}{|e_i|^{1/2}} \norm{\widehat{w}_h - w_h}_{L^2(e_i)} \\
 		& \lesssim \frac{|E|^{1/2}}{d+1} \del{\sum_{i=1}^{d+1} \frac{|E|}{|e_i|}}^{1/2} \norm{\widehat{w}_h - w_h}_{L^2(\partial E)}.
	\end{align*}
 	Recall that the shape-regularity assumption of the mesh $\mathcal{E}_h$ yields
	\[
 	|E| \approx h_E^d, \quad |e_i| \approx h_E^{d-1},
 	\]
 	and thus
 	\begin{align*}
 		\frac{|E|}{d+1} \sum_{i=1}^{d+1}  \widehat{\pi}_{e_i}^0 (\widehat w_h - w_h)
		& \lesssim |E|^{1/2} h_E^{1/2}  \norm{\widehat{w}_h - w_h}_{L^2(\partial E)}.
 	\end{align*}
 	For the second term on the right-hand side of \cref{eq:CR_integral_1}, since
  $\widehat{\pi}_{e_i}^0$ preserves the constants, we write
	\begin{align*}
 		\frac{1}{d+1} \sum_{i=1}^{d+1} \int_E (\widehat{\pi}_{e_i}^0 w_h - w_h)\dif x & \lesssim 	\frac{|E|^{1/2}}{d+1} \sum_{i=1}^{d+1} \norm{ \widehat{\pi}_{e_i}^0 w_h - w_h}_{L^2(E)} \\
		& \lesssim |E|^{1/2} h_E  \norm{\nabla w_h}_{L^2(E)}.
 	\end{align*}
These bounds imply
\begin{align*}
 		\int_\Omega \mathcal{L}_h^{\rm CR}(\widehat w_{h}) &\dif x	\lesssim  \sum_{E \in \mathcal{E}_h} |E|^{1/2} \del{ h_E  \norm{\nabla w_h}_{L^2(E)}+h_E^{1/2}  \norm{\widehat{w}_h - w_h}_{L^2(\partial E)} } \\
        & \lesssim |\Omega|^{1/2} \del[2]{  h^2 \norm{\nabla_h w_h}_{L^2(\Omega)}^2   +  \sum_{E \in \mathcal{E}_h} h_E  \norm{\widehat{w}_h - w_h}_{L^2(\partial E)}^2}^{1/2},
        \end{align*}
        which, with \eqref{eq:boundgradint}, gives the desired result.
  \end{proof}

    \subsection{Proof of \Cref{lem:HDG_RK}}
\label{ss:append_compact}

We reproduce the proof in \cite{Jiang:2023} here for completeness.
Since the sequence $\norm[1]{(w_h,\widehat{w}_h)}_{1,h}$ is uniformly bounded, the arguments in the proof of \Cref{lem:HDG_poincare} show that 
	$\norm[0]{ \mathcal{L}_h^{\rm CR}(	\widehat w_{h})}_{L^2(\Omega)} + \norm[0]{\nabla_h \mathcal{L}_h^{\rm CR}(	\widehat w_{h})}_{L^2(\Omega)}$ is also uniformly bounded. The compactness properties of the Crouzeix--Raviart element (see e.g. \cite[Section 2.4.3]{Brenner:2015}) yields the existence of a function $w \in H^1(\Omega)$ such that up to a subsequence,
		\[
		\mathcal{L}_h^{\rm CR}(	\widehat w_{h}) \to w, \quad \text{ in } L^2(\Omega).
 		\]
		Using the triangle inequality and the bound \eqref{eq:intboundOmega}, we find that
		\begin{align*}
			\norm[0]{w_h - w}_{L^2(\Omega)}  & \le \norm[1]{w_h - \mathcal{L}_h^{\rm CR}(	\widehat w_{h})}_{L^2(\Omega)} + \norm[1]{w - \mathcal{L}_h^{\rm CR}(	\widehat w_{h})}_{L^2(\Omega)} \\ & \lesssim h^{1/2}  \norm{(w_h,\widehat w_h)}_{1,h} + \norm[1]{w - \mathcal{L}_h^{\rm CR}(	\widehat w_{h})}_{L^2(\Omega)},
 		\end{align*}
		and thus passing to the limit as $h \to 0$,
		\[
		\lim_{h \to 0} \norm[0]{w_h - w}_{L^2(\Omega)} = 0.
        \]
		Moreover, the uniform boundedness of $\norm[1]{(w_h,\widehat{w}_h)}_{1,h}$ ensures the uniform boundedness of $\norm[1]{\bm G_h(w_h,\widehat{w}_h)}_{L^2(\Omega)}$
        and of $\norm[0]{\widehat{w}_h}_{L^2( \partial \Omega)}$. Indeed, one can show (see proof below)
        \begin{equation}\label{eq:boundpartOmega}
        \Vert \widehat w_h \Vert_{L^2(\partial\Omega)}
        \lesssim \Vert (w_h, \widehat w_h)\Vert_{1,h}.
        \end{equation}
        Thus, upon passage to a subsequence, there exist functions $\bm g \in L^2(\Omega)^d$ and $\widetilde w \in L^2(\partial \Omega)$ such that
		\[
 		\bm G_h(w_h,\widehat{w}_h) \rightharpoonup \bm g, \quad \text{ in } L^2(\Omega)^d, \quad \text{and}\quad \widehat{w}_h \to \widetilde{w}, \quad \text{ in } L^2(\partial \Omega).
		\]

		To see that in fact $\bm g = \nabla w$ and $\widetilde w = w|_{\partial \Omega}$, let $\bm \varphi \in C^\infty(\overline{\Omega})^d$ be arbitrary, define $\bm \varphi_h = \bm \Pi_h^{\rm BDM} \bm \varphi$ in the definition of the HDG gradient, and use weak and strong convergence results (we know that  $\varphi_h$ converges strongly to $\varphi$ in $L^2(\Omega)$) above to find:
		\begin{align*}
			(\bm g, \bm \varphi)_{\mathcal{E}_h} &= \lim_{h \to 0}  (\bm G(w_h,\widehat{w}_h),  \bm \Pi_h^{\rm BDM} \bm \varphi)_{\mathcal{E}_h} \\
 			&= \lim_{h \to 0} \del{  (\nabla w_h,  \bm \Pi_h^{\rm BDM} \bm \varphi)_{\mathcal{E}_h}  - \langle  w_h - \widehat w_h,  \bm \Pi_h^{\rm BDM} \bm \varphi \cdot \bm n\rangle_{ \partial \mathcal{E}_h} } \\
   &  = \lim_{h \to 0}  \del{  -(w_h,  \nabla \cdot  \bm \Pi_h^{\rm BDM}\bm \varphi)_{\mathcal{E}_h}  + \langle \widehat w_h,  \bm \Pi_h^{\rm BDM}\bm \varphi \cdot \bm n\rangle_{ \partial \mathcal{E}_h} } 
   \\
     &  = \lim_{h \to 0}  \del{  -(w_h,  \nabla \cdot  \bm \Pi_h^{\rm BDM}\bm \varphi)_{\mathcal{E}_h}  + \langle \widehat w_h,  \bm \varphi \cdot \bm n\rangle_{ \partial \mathcal{E}_h} } 
    \\
			&=   \lim_{h \to 0} \del{ -(w_h,  \pi_{k-1}\nabla \cdot \bm \varphi)_{\mathcal{E}_h} + \langle \widehat w_h,  \bm \varphi \cdot \bm n\rangle_{ \partial \Omega}}
 			\\ &= - (w,  \nabla \cdot \bm \varphi)_{\mathcal{E}_h} + \langle \widetilde w,  \bm \varphi \cdot \bm n\rangle_{ \partial \Omega}.
 		\end{align*}
   Thus, on the one hand, selecting $\bm \varphi \in C_c^\infty(\Omega)^d$ yields $\bm g = \nabla w$. On the other hand, this means that for all $\bm \varphi \in C^\infty(\overline{\Omega})^d$,
 		\begin{align*}
 			0 &= - (\nabla w, \bm \varphi)_{\mathcal{E}_h} - (w,  \nabla \cdot \bm \varphi)_{\mathcal{E}_h} + \langle \tilde w,  \bm \varphi \cdot \bm n\rangle_{ \partial \Omega} \\
			&=  \langle \tilde w - w,  \bm \varphi \cdot \bm n\rangle_{ \partial \Omega},
 		\end{align*}
		and thus $\tilde w = w|_{\partial \Omega}$. To complete the proof, we now show \eqref{eq:boundpartOmega}.
         Let $e \subset \partial \Omega$. Assume $e\subset E$. By the triangle inequality and a discrete trace inequality, we have
\begin{align*}
         \Vert &\widehat w_h \Vert_{L^2(e)}   \le \norm{w_h - \widehat w_h}_{L^2(e)} + \norm[1]{w_h - \mathcal{L}_h^{\rm CR}(	\widehat w_{h})}_{L^2(e)} +
         \norm[1]{\mathcal{L}_h^{\rm CR}(	\widehat w_{h})}_{L^2(e)} \\
         & \lesssim \norm{w_h - \widehat w_h}_{L^2(e)} + h_E^{-1/2}\norm[1]{w_h - \mathcal{L}_h^{\rm CR}(	\widehat w_{h})}_{L^2(E)} +
         \norm[1]{\mathcal{L}_h^{\rm CR}(\widehat w_{h})}_{L^2(e)} \\
        & \lesssim h_E^{1/2}\Vert \bm G_h(w_h,\widehat{w}_h)\Vert_{L^2(E)}
         + \Vert w_h -\widehat{w}_h\Vert_{L^2(\partial E)}
         + \norm[1]{F_h(E_h(\mathcal{L}_h^{\rm CR}(\widehat w_{h})))}_{L^2(e)}.
          \end{align*}
The last bound is obtained by \eqref{eq:disc_sobolev_3} and by using the identity
of the enrichment operator and the do-nothing operator: $\mathcal{L}_h^{\rm CR}(\widehat w_{h}) = F_h(E_h(\mathcal{L}_h^{\rm CR}(\widehat w_{h})))$.
For readibility, denote $z_h = E_h(\mathcal{L}_h^{\rm CR}(\widehat w_{h}))$. We note that
$F_h(z_h)$ is the piecewise linear interpolant of $z_h$. We thus have (with $e\subset E$)
\[
\Vert F_h(z_h)\Vert_{L^2(e)} \lesssim h_E^{1/2} \Vert \nabla z_h\Vert_{L^2(E)}
+ \Vert z_h \Vert_{L^2(e)}.
\]
Next, we square and sum the bound above for all $e\subset\partial\Omega$. We obtain
\begin{align*}
\Vert \widehat{w}_h\Vert_{L^2(\partial\Omega)}^2
\lesssim & h\Vert \bm{G}_h(w_h,\widehat w_h)\Vert_{L^2(\Omega)}^2 
+ \Vert w_h - \widehat w_h\Vert_{L^2(\partial\mathcal{E}_h)}^2\\
&+ h \Vert \nabla E_h(\mathcal{L}_h^{\rm CR}(\widehat w_{h}))\Vert_{L^2(\Omega)}^2
+\Vert E_h(\mathcal{L}_h^{\rm CR}(\widehat w_{h}))\Vert_{L^2(\partial\Omega)}^2\\
  \lesssim & \norm{(w_h,\widehat w_h)}_{1,h}^2
+ \Vert \nabla E_h(\mathcal{L}_h^{\rm CR}(\widehat w_{h}))\Vert_{L^2(\Omega)}^2
 + \Vert E_h(\mathcal{L}_h^{\rm CR}(\widehat w_{h}))\Vert_{L^2(\Omega)}^2\\
 \lesssim &\norm{(w_h,\widehat w_h)}_{1,h}^2
 + \Vert \nabla_h \mathcal{L}_h^{\rm CR}(\widehat w_{h})\Vert_{L^2(\Omega)}^2 + \Vert \mathcal{L}_h^{\rm CR}(\widehat w_{h})\Vert_{L^2(\Omega)}^2.
 \end{align*}
For the second inequality above, we use the
trace inequality to $z_h$ and for the third inequality, we apply \eqref{eq:boundsEh}. 
We then conclude with \eqref{eq:grad_LCR_HDG} and \eqref{eq:L2Lh}.

\subsection{Proof of bound \eqref{eq:par_el_calc}}
\label{app:scaling}

\begin{proof}
We recall that $\bm \eta_h^i$ belongs to the Raviart-Thomas space $\text{RT}_k(\Omega)$. 
Let $\tilde{E}$ be the reference element and let $\tilde{\bm\pi}_{k-1}$ denote the
$L^2$ projection on $\mathbb{P}_{k-1}(\tilde{E})^d$. Since 
\[
\tilde{\bm Z} \mapsto \Vert \tilde{\bm\pi}_{k-1}\tilde{\bm Z}\Vert_{L^2(\tilde{E})}
+ \sum_{\tilde{e}\subset\partial\tilde{E}} \Vert \tilde{\bm Z}\cdot\tilde{\bm n}\Vert_{L^2(\tilde{e})}
\]
is a norm on the space $RT_k(\tilde{E})$, the equivalence of norms on finite-dimensional spaces yields
\[
\Vert \tilde{\bm Z}\Vert_{L^2(\tilde E)} \lesssim \Vert \tilde{\bm\pi}_{k-1}\tilde{\bm Z}\Vert_{L^2(\tilde{E})}
+ \sum_{\tilde{e}\subset\partial\tilde{E}} \Vert \tilde{\bm Z}\cdot\tilde{\bm n}\Vert_{L^2(\tilde{e})}.
\]
Using the Piola transformation from the reference element to any element $E$, we can write 
\begin{align*}
\Vert \bm \eta_h^i \Vert_{L^2(E)}  &\lesssim h_E^{-d/2+1} \Vert \tilde{\bm \eta}_h^i
\Vert_{L^2(\tilde E)}\lesssim h_E^{-d/2+1} \sum_{\tilde{e}\subset\partial\tilde{E}} \Vert \tilde{\bm \eta}_h^i\cdot\tilde{\bm n}\Vert_{L^2(\tilde{e})},
\end{align*}
since $\bm \pi_{k-1} \bm \eta_h^i = \bm 0$.
We now pass back to the physical element:
\[
\Vert \bm \eta_h^i \Vert_{L^2(E)}  \lesssim h_E^{-d/2+1} 
h_E^{(d-1)/2} \sum_{e\subset\partial E} \Vert \bm\eta_h^i \cdot \bm n\Vert_{L^2(e)} \lesssim h^{1/2} \Vert \bm \eta_h^i \cdot \bm n\Vert_{L^2(\partial\mathcal{E}_h)}.
\]
\end{proof}

\subsection{Discrete Aubin--Lions} \label{ss:appendix_AubinLions}

First, we prove the following discrete variant of Ehrling's lemma:

\begin{lemma} \label{lem:Ehrling}
Let $\mathcal{H} \subset (0,|\Omega|)$ be a countable collection of mesh-sizes whose unique accumulation point is zero.
Let $\cbr{(w_h, \widehat w_h)}_{h \in \mathcal{H}} \in \cbr{W_h \times M_h}_{h \in \mathcal{H}}$.
For any $\epsilon > 0$, there a constant $C(\epsilon) \ge 0$ such that, we have
\[
\forall h\in\mathcal{H}, \quad \norm{w_h}_{L^2(\Omega)} \le \epsilon \norm[0]{(w_h, \widehat w_h)}_{1,h} + C(\epsilon) \norm{w_h}_{W^{1,2d}(\Omega)^\star}.
\]
\end{lemma}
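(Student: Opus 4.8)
The plan is to mimic the classical Ehrling (interpolation) lemma, whose proof runs by contradiction and rests on a compact embedding. Here the three ``spaces'' are the discrete $H^1$-type norm $\|(\cdot,\cdot)\|_{1,h}$, which supplies compactness through the discrete Rellich--Kondrachov result (Lemma~\ref{lem:HDG_RK}), the intermediate space $L^2(\Omega)$, and the weak space $W^{1,2d}(\Omega)^\star$. The one structural fact I would record first is the continuous, injective embedding $L^2(\Omega)\hookrightarrow W^{1,2d}(\Omega)^\star$: for $u\in L^2(\Omega)$ and $\phi\in W^{1,2d}(\Omega)$ one has $|\langle u,\phi\rangle| = |(u,\phi)_\Omega|\le \|u\|_{L^2(\Omega)}\|\phi\|_{L^2(\Omega)}\lesssim \|u\|_{L^2(\Omega)}\|\phi\|_{W^{1,2d}(\Omega)}$, using the Sobolev embedding $W^{1,2d}(\Omega)\hookrightarrow L^2(\Omega)$ (valid on the bounded domain $\Omega$ since $2d\ge 2$); hence $\|u\|_{W^{1,2d}(\Omega)^\star}\lesssim \|u\|_{L^2(\Omega)}$, and the map is injective because $W^{1,2d}(\Omega)$ is dense in $L^2(\Omega)$. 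In particular, strong convergence in $L^2(\Omega)$ entails strong convergence in $W^{1,2d}(\Omega)^\star$.

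Suppose, for contradiction, that the claim fails. Then there is an $\epsilon_0>0$ such that for each $n\ge 1$ one can select $h_n\in\mathcal{H}$ with
\[
\|w_{h_n}\|_{L^2(\Omega)} > \epsilon_0 \,\|(w_{h_n},\widehat w_{h_n})\|_{1,h_n} + n\,\|w_{h_n}\|_{W^{1,2d}(\Omega)^\star}.
\]
Since the right-hand side is nonnegative, $w_{h_n}\ne 0$, so I normalize by setting $v_n = w_{h_n}/\|w_{h_n}\|_{L^2(\Omega)}$ and $\widehat v_n = \widehat w_{h_n}/\|w_{h_n}\|_{L^2(\Omega)}$; by homogeneity of all three norms,
\[
1 > \epsilon_0\,\|(v_n,\widehat v_n)\|_{1,h_n} + n\,\|v_n\|_{W^{1,2d}(\Omega)^\star},
\qquad \|v_n\|_{L^2(\Omega)}=1 .
\]
Thus $\|(v_n,\widehat v_n)\|_{1,h_n}\le 1/\epsilon_0$ is bounded while $\|v_n\|_{W^{1,2d}(\Omega)^\star}< 1/n\to 0$.

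The remaining work is a dichotomy on the mesh sizes $h_n$, which is the genuinely discrete feature of the argument and the step I expect to demand the most care. Because $\mathcal{H}$ has $0$ as its unique accumulation point, $\mathcal{H}\cap[\delta,|\Omega|)$ is finite for every $\delta>0$, so $(h_n)$ admits either a subsequence converging to $0$ or a subsequence constantly equal to some fixed $h^\star\in\mathcal{H}$. In the first case I apply Lemma~\ref{lem:HDG_RK} to the bounded sequence $(v_n,\widehat v_n)$ to extract a further subsequence and a limit $v\in H^1(\Omega)$ with $v_n\to v$ strongly in $L^2(\Omega)$; then $\|v\|_{L^2(\Omega)}=1$, yet the embedding above forces $v_n\to v$ in $W^{1,2d}(\Omega)^\star$, while $v_n\to 0$ there, so $v=0$ in $W^{1,2d}(\Omega)^\star$ and hence, by injectivity, $v=0$ in $L^2(\Omega)$, contradicting $\|v\|_{L^2(\Omega)}=1$. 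In the second case all $v_n$ lie in the fixed finite-dimensional space $W_{h^\star}$, on which $\|\cdot\|_{L^2(\Omega)}$ and $\|\cdot\|_{W^{1,2d}(\Omega)^\star}$ are equivalent norms (the latter is a norm on $W_{h^\star}$ by the injectivity just established), giving $1=\|v_n\|_{L^2(\Omega)}\lesssim \|v_n\|_{W^{1,2d}(\Omega)^\star}\to 0$, again a contradiction. Either way the assumption is untenable, and the stated bound follows.
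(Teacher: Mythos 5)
Your proof is correct and follows essentially the same route as the paper's: argue by contradiction, normalize to $\|w_{h_n}\|_{L^2(\Omega)}=1$ with $\|(w_{h_n},\widehat w_{h_n})\|_{1,h_n}$ bounded and the dual norm vanishing, invoke the discrete Rellich--Kondrachov result (\Cref{lem:HDG_RK}) to extract a strong $L^2(\Omega)$ limit of unit norm, and use the continuous injective embedding $L^2(\Omega)\hookrightarrow W^{1,2d}(\Omega)^\star$ to force that limit to be zero. The one place you go beyond the paper is the dichotomy on the counterexample mesh sizes: the paper enumerates $\mathcal{H}$ and pairs the $j$-th constant with the $j$-th mesh, tacitly arranging that the meshes tend to zero, whereas you correctly allow the selected $h_n$ to repeat and dispose of a subsequence stuck at a fixed $h^\star$ by norm equivalence on the finite-dimensional space $W_{h^\star}$ --- a welcome tightening of the argument.
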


\begin{proof}
    The proof, by contradiction, is standard (see, e.g. \cite{Gallouet:2012} in the context of numerical schemes for parabolic equations). Let $\cbr{h_j}_{j=1}^\infty$ be an enumeration of the countable set of mesh-sizes $\mathcal{H}$. Suppose to the contrary that the lemma is false: there exists an $\epsilon_0 > 0$ such that, for any $j\in\mathbb{N}^\ast$, we can find $(w_{h_{j}}, \widehat w_{h_{j}}) \in \mathcal{W}_{h_{j}} \times \mathcal{M}_{h_{j}}$ satisfying
    \begin{equation} \label{eq:Ehrling}
    \norm[0]{w_{h_{j}}}_{L^2(\Omega)} > \epsilon_0 \norm[0]{(w_{h_{j}}, \widehat w_{h_{j}})}_{1,h_{j}} + j\norm[0]{w_{h_{j}}}_{H^2(\Omega)^\star}.
    \end{equation}
    We may suppose that $\norm[0]{w_{h_{j}}}_{L^2(\Omega)} = 1$.
    On the one hand, \cref{eq:Ehrling} yields $\norm[0]{(w_{h_{j}}, \widehat w_{h_{j}})}_{1,h_{j}} < 1/\epsilon_0$. Owing to \Cref{lem:HDG_RK}, we can pass to a subsequence to find $w_{h_j} \to w$ in $L^2(\Omega)$ as $j \to \infty$ for some $w \in L^2(\Omega)$ with $\norm{w}_{L^2(\Omega)} = 1$.
    On the other hand, we find $\norm[0]{w_{h_{j}}}_{W^{1,2d}(\Omega)^\star} \le 1/j$, so that $\lim_{j \to \infty} \norm[0]{w_{h_{j}}}_{W^{1,2d}(\Omega)^\star} = 0$. 
    Since $L^2(\Omega) \subset W^{1,2d}(\Omega)^\star$, this contradicts that $w \ne 0$. 
\end{proof}

\begin{lemma}
\label{lem:convChapp}
Let $\mathcal{H} \subset (0,|\Omega|)$ be a countable collection of mesh-sizes whose unique accumulation point is zero.
    Let $\cbr{(c_h,\widehat c_h)}_{h \in \mathcal{H}} \subset \mathcal{W}_h \times \mathcal{M}_h$ be a sequence such that the following bounds hold
    \begin{align} \label{eq:compact_time_der_bnd}
    \tau \sum_{i=1}^N 
    \norm[0]{\delta_\tau c_h^i}_{W^{1,2d}(\Omega)^\star}^2 \le M_1, \\
\label{eq:compact_spat_der_bnd}
    \max_{1 \le i \le N} \norm[0]{c_h^i}_{L^2(\Omega)} +\tau \sum_{i=1}^N \norm[0]{(c_h^i, \widehat c_h^{\, i})}_{1,h}^2 \lesssim M_2,
    \end{align}
    for some constants $M_1,M_2>0$ independent of $h$ and $\tau$. Then, there exists $\widetilde c \in L^2(0,T;L^2(\Omega))$ and a (not relabeled) subsequence $\cbr{c_h}_{h \in \mathcal{H}}$ such that
    \[
    c_h \to \widetilde c, \quad \text{in } L^2(0,T;L^2(\Omega)).
    \]
\end{lemma}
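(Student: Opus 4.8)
The plan is to follow the strategy of Simon, combined with the discrete Ehrling inequality of \Cref{lem:Ehrling}, and to reach strong convergence in $L^2(0,T;L^2(\Omega))$ in two movements: first a soft compactness in the weaker space $L^2(0,T;W^{1,2d}(\Omega)^\star)$ driven by a uniform control of time translations, and then an upgrade to the $L^2(0,T;L^2(\Omega))$ topology using \Cref{lem:Ehrling} together with the discrete spatial bound \eqref{eq:compact_spat_der_bnd}. Throughout I write $Y = W^{1,2d}(\Omega)^\star$ and use, as is implicit in the statement, that $\tau=\tau(h)\to 0$ as $h\to 0$ along $\mathcal{H}$.

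First I would bound the time translations of $c_h$ in $Y$. Writing $\tilde c_h$ for the piecewise affine interpolant from \eqref{eq:c_interp_time}, the elementary identity $c_h(t)-\tilde c_h(t)=(t_i-t)\,\delta_\tau c_h^i$ on $(t_{i-1},t_i]$ together with \eqref{eq:compact_time_der_bnd} gives $\|c_h-\tilde c_h\|_{L^2(0,T;Y)}^2=\tfrac{\tau^2}{3}\,\tau\sum_i\|\delta_\tau c_h^i\|_Y^2\le\tfrac{\tau^2}{3}M_1$. Since $\partial_t\tilde c_h=\delta_\tau c_h^i$ on each subinterval, the fundamental theorem of calculus, the Cauchy--Schwarz inequality and Fubini yield $\|\tilde c_h(\cdot+s)-\tilde c_h\|_{L^2(0,T-s;Y)}\le s\,\|\partial_t\tilde c_h\|_{L^2(0,T;Y)}\le s\,M_1^{1/2}$. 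By the triangle inequality I obtain
\[
\|c_h(\cdot+s)-c_h\|_{L^2(0,T-s;Y)}\le (s+C\tau)\,M_1^{1/2},
\]
so that $\limsup_{h\to0}\|c_h(\cdot+s)-c_h\|_{L^2(0,T-s;Y)}\le s\,M_1^{1/2}\to 0$ as $s\to0$. Next, \eqref{eq:compact_spat_der_bnd} makes $\{c_h\}$ bounded in $L^\infty(0,T;L^2(\Omega))$, hence in $L^2(0,T;L^2(\Omega))$, and since $W^{1,2d}(\Omega)\hookrightarrow\hookrightarrow L^2(\Omega)$ compactly (Rellich--Kondrachov, as $2d>d$), the adjoint embedding $L^2(\Omega)\hookrightarrow\hookrightarrow Y$ is compact. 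Feeding this compact embedding, the uniform $L^2(0,T;L^2(\Omega))$ bound, and the preceding time-translation estimate into the sequential Aubin--Lions--Simon theorem \cite{Simon:1986}, I extract a not-relabeled subsequence and a limit $\widetilde c$ with $c_h\to\widetilde c$ strongly in $L^2(0,T;Y)$. Moreover, \eqref{eq:compact_spat_der_bnd} and \Cref{cor:spacetime_grad_conv} produce (along a further subsequence) a function in $L^2(0,T;H^1(\Omega))$ that is the weak $L^2(0,T;L^2(\Omega))$ limit of $c_h$; by uniqueness of distributional limits it coincides with $\widetilde c$, so $\widetilde c\in L^2(0,T;H^1(\Omega))$.

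Finally, the upgrade from $L^2(0,T;Y)$ to $L^2(0,T;L^2(\Omega))$ is where the discrete Ehrling lemma enters, and I expect it to be the main obstacle: the difference $c_h-\widetilde c$ (or $c_h-c_{h'}$) does not lie in a single discrete space $W_h$, whereas \Cref{lem:Ehrling} only applies to discrete functions. To circumvent this I would insert the Scott--Zhang quasi-interpolant $\Pi^{\rm SZ}\widetilde c(t)\in W_h\cap C^0(\overline\Omega)$, whose single-valued trace lies in $M_h$. Using the $H^1$-stability and approximation of $\Pi^{\rm SZ}$ in \eqref{eq:SZ_approx} (and \Cref{prop:approx_prop_weighted_L2_SZ} with $p=2$), together with the fact that continuity of $\Pi^{\rm SZ}\widetilde c$ makes its facet jumps vanish and reduces $\bm G_h$ to the $L^2$-projection of $\nabla\Pi^{\rm SZ}\widetilde c$, I get $\Pi^{\rm SZ}\widetilde c\to\widetilde c$ in $L^2(0,T;L^2(\Omega))$ and $\int_0^T\|(\Pi^{\rm SZ}\widetilde c,\Pi^{\rm SZ}\widetilde c|_{\partial\mathcal{E}_h})\|_{1,h}^2\,dt\lesssim\|\widetilde c\|_{L^2(0,T;H^1(\Omega))}^2$. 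Applying \Cref{lem:Ehrling} to $c_h-\Pi^{\rm SZ}\widetilde c\in W_h$ for a.e.\ $t$, squaring, and integrating gives, for every $\epsilon>0$,
\[
\|c_h-\Pi^{\rm SZ}\widetilde c\|_{L^2(0,T;L^2(\Omega))}^2\lesssim \epsilon^2\!\int_0^T\!\|(c_h-\Pi^{\rm SZ}\widetilde c,\,\widehat c_h-\Pi^{\rm SZ}\widetilde c|_{\partial\mathcal{E}_h})\|_{1,h}^2\,dt+C(\epsilon)^2\|c_h-\Pi^{\rm SZ}\widetilde c\|_{L^2(0,T;Y)}^2.
\]
The first integral is bounded uniformly in $h$ by \eqref{eq:compact_spat_der_bnd} and the interpolant bound, while the second term tends to $0$ because both $c_h$ and $\Pi^{\rm SZ}\widetilde c$ converge to $\widetilde c$ in $L^2(0,T;Y)$. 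Passing to the limit $h\to0$ and then $\epsilon\to0$, and combining with $\Pi^{\rm SZ}\widetilde c\to\widetilde c$ in $L^2(0,T;L^2(\Omega))$, yields $c_h\to\widetilde c$ strongly in $L^2(0,T;L^2(\Omega))$, which is the claim. Besides this interpolation device, the only other delicate point is the asymptotic uniformity of the time-translation bound, which is why $\tau\to0$ is used to absorb the $C\tau$ term in the $\limsup$.
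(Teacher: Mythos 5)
Your proof is correct, and while it shares the paper's two-stage skeleton (compactness in the weak space $W^{1,2d}(\Omega)^\star$ driven by the time-derivative bound, then an upgrade to $L^2(0,T;L^2(\Omega))$ via the discrete Ehrling lemma), the execution of both stages is genuinely different. For the first stage, the paper does not invoke an off-the-shelf Aubin--Lions--Simon theorem: it mollifies $c_h$ in time, applies Arzel\`a--Ascoli to the mollified family (using \Cref{lem:HDG_RK} pointwise in time), and shows by hand---via a jump-counting estimate on time translates---that the mollification error is uniformly small, obtaining compactness only on interior intervals $(\theta,T-\theta)$; the endpoints are recovered afterwards from the $L^\infty(0,T;L^2(\Omega))$ bound. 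You instead observe that $L^2(\Omega)\hookrightarrow\hookrightarrow W^{1,2d}(\Omega)^\star$ (Schauder, from Rellich--Kondrachov) and feed your translate estimate into Simon's criterion directly on all of $(0,T)$. Note that the paper's jump-counting bound on translates ($\lesssim M_1 |s|$, uniform in $h$ and $\tau$) is sharper than your affine-interpolant bound ($\lesssim (s+C\tau)^2 M_1$), which is why you need $\tau(h)\to 0$ together with the finitely-many-exceptions argument you allude to, whereas the paper's proof needs neither. For the second stage your route is arguably cleaner than the paper's: the paper applies \Cref{lem:Ehrling} to differences $c_{h_1}-c_{h_2}$ of functions living in two \emph{different} discrete spaces (so the meaning of $\norm{(\cdot,\cdot)}_{1,h}$ and the scope of the lemma have to be stretched), while your insertion of $\Pi^{\rm SZ}\widetilde{c}\in W_h\cap C^0(\overline\Omega)$ keeps every Ehrling application inside a single $W_h\times M_h$, at the modest extra cost of first identifying $\widetilde{c}\in L^2(0,T;H^1(\Omega))$ via \Cref{cor:spacetime_grad_conv}. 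Both proofs share the same benign caveat: \Cref{lem:Ehrling} must be read as providing a constant $C(\epsilon)$ uniform over all discrete pairs (which its contradiction proof indeed delivers), since the inequality is applied under a time integral rather than to a single function per mesh.
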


\begin{proof}

In brief, we first show that $\cbr{c_h}_{h \in \mathcal{H}}$ is relatively compact in $L_{\rm loc}^2(0,T;W^{1,2d}(\Omega)^\star)$, and then use a discrete Ehrling inequality (c.f. \Cref{lem:Ehrling}) and a uniform integrability result to upgrade the compactness to $L^2(0,T;L^2(\Omega))$. The argument is broken into three steps:
\begin{enumerate}[noitemsep,topsep=2pt,leftmargin=!,labelwidth=\widthof{(ii)}]
    \item[(i)] 
    We mollify $\cbr{c_h}_{h \in \mathcal{H}}$ to obtain a sequence of temporally smoothed approximations $\cbr{c_h^{\eta}}_{h \in \mathcal{H}}$ so that we may leverage the  Arzel\'{a}--Ascoli theorem (see e.g. \cite[Lemma 1]{Simon:1986}) to $\cbr{c_h^{\eta}}_{h \in \mathcal{H}}$ is relatively compact in $C(0,T;W^{1,2d}(\Omega)^\star)$, and hence also in $L^2(0,T;W^{1,2d}(\Omega)^\star)$.
    As $L^2(0,T;W^{1,2d}(\Omega)^\star)$ is complete, we conclude $\cbr{c_h^{\eta}}_{h \in \mathcal{H}}$ is totally bounded and so admits a finite cover of $\epsilon/2$-balls.
    \item[(ii)] 
    If we can assert that for each $h \in \mathcal{H}$ and a given $\theta$ with $0 < \theta < T/2$, $c_h^{\eta}$ converges uniformly as $\eta \to 0$ in $L^2(\theta,T-\theta;W^{1,2d}(\Omega)^\star)$, then we can conclude by the triangle inequality that $\cbr{c_h}_{h \in \mathcal{H}}$ too can be covered by a finite number of $\epsilon$-balls, and hence is relatively compact in $L^2(\theta,T-\theta;W^{1,2d}(\Omega)^\star)$. The key to proving this uniform convergence is a uniform bound on the time-translates of $\cbr{c_h}_{h \in \mathcal{H}}$, and we establish below that the necessary bound on the time-translates follows from \cref{eq:compact_time_der_bnd}.
    \item[(iii)] Finally, to translate the compactness to $L^2(0,T;L^2(\Omega))$, we argue as follows. Using the previously establish compactness of $\cbr{c_h}_{h \in \mathcal{H}}$ in $L^2(\theta,T-\theta;W^{1,2d}(\Omega)^\star)$, we extract a (not relabeled) strongly converging subsequence. We then show that this subsequence is Cauchy in $L^2(\theta,T-\theta;L^2(\Omega))$ as a consequence of the Ehrling-type lemma \Cref{lem:Ehrling}. Finally, we show that we have compactness in $L^2(0,T;L^2(\Omega))$ by leveraging the uniform $L^\infty(0,T;L^2(\Omega))$ bound on $\cbr{c_h}_{h \in \mathcal{H}}$ in \cref{eq:compact_spat_der_bnd}.
\end{enumerate}
\medskip

\emph{ad. (i)}
Let $\varphi: \mathbb{R} \to \mathbb{R}$ be a smooth, non-negative mollifier compactly supported in the interval $(-1,1)$ with unit integral. For $\eta > 0$, define a family $\cbr{(c_h^\eta, \widehat{c}_h^{\, \eta})}_{h \in \mathcal{H}}$ of smoothed discrete functions as follows: first, extend each $(c_h,\widehat c_h)$ by zero outside of $[0,T]$. Then, set $c_h^\eta = \varphi_\eta * c_h$ and $\widehat c_h^{\, \eta} = \varphi_\eta   *  \widehat c_h$ for each $h \in \mathcal{H}$. 
Observe that for fixed $t \in [0,T]$, \cref{eq:compact_spat_der_bnd} yields
\begin{align*}
  \norm{(c_h^\eta(t), \widehat c^{\, \eta}_h(t))}_{1,h} &\leq \int_{|s| < \eta} |\varphi_{\eta}(t-s))|\norm{(c_h(s), \widehat c_h(s))}_{1,h} \dif s \\
  & \le \sup_{|s| \le \eta} |\varphi_{\eta}(t - s)| \int_0^T \norm{(c_h(s), \widehat c_h(s))}_{1,h} \dif s \\
  & \lesssim   M_2.
\end{align*}
By \cref{lem:HDG_RK}, $\cbr{ c_h^\eta(t) \, : \, h \in \mathcal{H}}$ is relatively compact in $L^2(\Omega)$ and thus also 
in $W^{1,2d}(\Omega)^\star$ since the former embeds continuously into the latter. From the uniform Lipschitz continuity of $\varphi_{\eta}$, 
\begin{align*}
&\norm{(c_h^\eta(t_1), \widehat c_h^{\,\eta}(t_1)) - (c_h^\eta(t_2), \widehat c_h^{\,\eta}(t_2))}_{1,h} 
\\ & \leq \int_{|s| < \eta} |\varphi_{\eta}(t_1-s) - \varphi_{\eta}(t_2-s)| \norm{(c_h(s), \widehat c_h(s))}_{1,h} \dif s \\
& \lesssim M_2|t_1 - t_2|.
\end{align*}
Thus, the family $\cbr{ c_h^\eta(t) \, : \, h \in \mathcal{H}, 0\leq t\leq T}$ is uniformly Lipschitz and hence uniformly equicontinuous. Using Arzel\'{a}--Ascoli, we conclude that the set $\cbr{ c_h^\eta \, : \, h \in \mathcal{H}}$ is relatively compact in $C(0,T;W^{1,2d}(\Omega)^\star)$, and hence also in the space $L^2(0,T;W^{1,2d}(\Omega)^\star)$ as the former embeds continuously into the latter. 
\medskip

\emph{ad. (ii)} We fix $\theta$ with $0<\theta< T/2$ and we show that $c_h^\eta \to c_h$ in $L^2(\theta,T-\theta;W^{1,2d}(\Omega)^\star)$ uniformly as $\eta \to 0$.
To this end, observe
by Cauchy--Schwarz's inequality
\begin{align*}
&\norm{c_h(t) - c_h^\eta(t)}_{W^{1,2d}(\Omega)^\star} \\&\le \int_{|s| < \eta} \norm{c_h(t) - c_h(t - s)}_{W^{1,2d}(\Omega)^\star} \varphi_\eta (s) \dif s \\
& \le \del[3]{\int_{|s| < \eta} \varphi_{\eta}(s) \dif s}^{1/2} \del{\int_{|s| < \eta} \varphi_{\eta}(s) \norm{c_h(t) - c_h(t - s)}_{W^{1,2d}(\Omega)^\star}^2 \dif s}^{1/2} \\
&= \del{\int_{|s| < \eta} \varphi_{\eta}(s) \norm{c_h(t) - c_h(t - s)}_{W^{1,2d}(\Omega)^\star}^2 \dif s}^{1/2}.
\end{align*}
Squaring both sides of the above inequality, integrating over $(\theta,T-\theta)$ for $\theta < T/2$, and applying Fubini's theorem, we find
\begin{align*}
\int_\theta^{T-\theta} &\norm{c_h(t) - c_h^\eta(t)}_{W^{1,2d}(\Omega)^\star}^2 \\ &\le \int_\theta^{T-\theta} \int_{|s| < \eta} \varphi_{\eta}(s) \norm{c_h(t) - c_h(t - s)}_{W^{1,2d}(\Omega)^\star}^2 \dif s \dif t \\
&=   \int_{|s| < \eta} \varphi_{\eta}(s) \int_\theta^{T-\theta} \norm{c_h(t) - c_h(t - s)}_{W^{1,2d}(\Omega)^\star}^2  \dif t \dif s 
\end{align*}
Choosing $\eta$ small enough, namely $\eta <  \theta$,  we have
\[
\int_\theta^{T-\theta} \norm{c_h(t) - c_h^\eta(t)}_{W^{1,2d}(\Omega)^\star}^2 
 \le \sup_{|\delta'| < \eta  } \int_{ \eta  }^{T} \norm{c_h(t) - c_h(t - \delta')}_{W^{1,2d}(\Omega)^\star}^2  \dif t
 \]\\
To estimate the right hand side of the above inequality, we note that since $c_h$ is piecewise constant in time, 
and if $\delta'\geq 0$
\begin{align*}
\norm{c_h(t) - c_h(t-\delta')}_{W^{1,2d}(\Omega)^\star} & \le \sum_{i: \, t_{i} \in (t-\delta',t)} \norm[0]{c_h^i - c_h^{i-1}}_{W^{1,2d}(\Omega)^\star} \\
&\leq \tau \sum_{i: \, t_{i} \in (t-\delta',t)} \norm[0]{\delta_\tau c_h^i}_{W^{1,2d}(\Omega)^\star} 
\end{align*}
Since there can be at most $N$ time intervals, we have
\begin{align*}
\norm{c_h(t) - c_h(t-\delta')}_{W^{1,2d}(\Omega)^\star} ^2 \leq &
T \tau \sum_{i: \, t_{i} \in (t-\delta',t)} \norm[0]{\delta_\tau c_h^i}_{W^{1,2d}(\Omega)^\star}^2\\
& \leq T \tau \sum_{j=1}^N \norm[0]{\delta_{\tau} c_h^j}_{W^{1,2d}(\Omega)^\star}^2
\chi_{(t_j,t_j+\delta')}(t),
\end{align*}
where $\chi_{(t_j,t_j+\delta')}$ is the characteristic function of the set $(t_j,t_j+\delta')$.  We then obtain with \eqref{eq:compact_time_der_bnd}
\Bk
\begin{align*}
    \int_\eta^{T} \norm{c_h(t) - c_h(t-\delta')}_{W^{1,2d}(\Omega)^\star}^2 \dif t &  
    \leq T \tau \sum_{j=1}^N \norm[0]{\delta_\tau c_h^j}_{W^{1,2d}(\Omega)^\star}^2
\int_\eta^T \chi_{(t_j,t_j+\delta')}(t) \dif t\\
    & \leq T^2 M_1 \vert \delta'\vert
    \leq T^2 M_1 \eta.
\end{align*}
The same bound can be obtained if $\delta'\leq 0$. 
This implies that for any $\epsilon>0$ and any $\eta < \epsilon/(2 T^2 M_1)$, we have
\begin{align*}
\int_\theta^{T-\theta} \norm{c_h(t) - c_h^\eta(t)}_{W^{1,2d}(\Omega)^\star}^2  \leq
T^2 M_1 \eta
 \leq \frac{\epsilon}{2}.
\end{align*}
\Bk
It follows that $\cbr{ c_h \, : \, h \in \mathcal{H}}$ is relatively compact in $L^2(\theta,T-\theta;W^{1,2d}(\Omega)^\star)$.
\medskip

\emph{ad. (iii)}
We select a Cauchy subsequence of $\cbr{ c_h \, : \, h \in \mathcal{H}}$ in $L^2(\theta,T-\theta;W^{1,2d}(\Omega)^\star)$ and apply \Cref{lem:Ehrling}. Fix $\epsilon > 0$.
There exists $C(\epsilon) > 0$ such that
for any $h_1, h_2 \in \mathcal{H}$,
\begin{align*}
    \int_\theta^{T-\theta} & \norm{c_{h_1} - c_{h_2}}_{L^2(\Omega)}^2 \dif t \\ &\le 2\epsilon^2 \int_0^T\norm[0]{(c_{h_1} - c_{h_2}, \widehat c_{h_1} - \widehat c_{h_2})}_{1,h}^2 \dif t + 2C(\epsilon)^2 \int_\theta^{T-\theta}\norm{c_{h_1} - c_{h_2}}_{W^{1,2d}(\Omega)^\star}^2 \dif t \\
    & \le 4 M_2\epsilon^2 + 2C(\epsilon)^2 \int_\theta^{T-\theta}\norm{c_{h_1} - c_{h_2}}_{W^{1,2d}(\Omega)^\star}^2 \dif t.
\end{align*}
The first term is bounded by \eqref{eq:compact_spat_der_bnd}. The second term
tends to zero as $h_1, h_2$ tend to zero. This implies 
\begin{equation*}
    \limsup_{h_1,h_2 \to 0} \int_\theta^{T-\theta}  \norm{c_{h_1} - c_{h_2}}_{L^2(\Omega)}^2 \dif t \le 4M_1\epsilon^2,
\end{equation*}
As $\epsilon>0$ was arbitrary, we find 
\begin{equation*}
    \lim_{h_1,h_2 \to 0} \int_\theta^{T-\theta}  \norm{c_{h_1} - c_{h_2}}_{L^2(\Omega)}^2 \dif t = 0.
\end{equation*}
It follows that $\cbr{ c_h \, : \, h \in \mathcal{H}}$ is also Cauchy in $L^2(\theta, T-\theta, L^2(\Omega))$, from which we deduce its compactness in $L^2(\theta,T-\theta;L^2(\Omega))$. Finally, 
suppose we have a Cauchy subsequence in $L^2(\theta, T-\theta, L^2(\Omega))$ of $\cbr{ c_h \, : \, h \in \mathcal{H}}$. We apply H\"older's inequality and the $L^\infty(0,T;L^2(\Omega))$-bound in \cref{eq:compact_spat_der_bnd} to find
\begin{align*}
   \int_0^\theta &\norm{c_{h_1}(t) - c_{h_2}(t)}_{L^2(\Omega)}^2 \dif t + \int_{T- \theta}^{T} \norm{c_{h_1}(t) - c_{h_2}(t)}_{L^2(\Omega)}^2 \dif t \le 8 M_2^2\theta.   
\end{align*}
Thus, if $0 < \theta < \epsilon/(16M_1^2)$, the selected subsequence is also Cauchy in $L^2(0,T;L^2(\Omega))$. The proof is now complete.
\end{proof}

\bibliographystyle{siamplain}
\bibliography{references}
\end{document}